\crefname{de}{Definition}{Definitions}
\crefname{prop}{Proposition}{Propositions}
\crefname{lem}{Lemma}{Lemmas}
\crefname{rem}{Remark}{Remarks}
\crefname{ex}{Example}{Examples}
\crefname{tw}{Theorem}{Theorems}
\crefname{cor}{Corollary}{Corollaries}
\crefname{con}{Conjecture}{Conjectures}
\crefname{equation}{}{}
\crefname{figure}{Figure}{Figures}
\crefname{section}{Section}{Sections}
\crefname{ob}{Observation}{Observations}
\pgfplotsset{compat=1.11}
\tikzset{fontscale/.style = {font=\relsize{#1}}}
\title{THE TOP-DEGREE PART IN THE MATCHINGS-JACK CONJECTURE}
\author{Adam Burchardt }
\thanks{2010 Mathematics Subject Classification. Primary 05C10; Secondary 05C30, 05E05, 20C30}
\thanks{Key words and phrases. Jack polynomials, connection coefficients, Matchings-Jack Conjecture, $b$-Conjecture, maps, matchings, Jack characters, structure constants, cumulants.}
\thanks{Research supported by Narodowe Centrum Nauki, grant number
2014/15/B/ST1/00064}
\titleformat{\section}[block]{\Large\bfseries\filcenter}{\thesection .}{0.5em}{}
\titleformat{\subsection}[runin]{\bfseries}{\thesubsection .}{0.5em}{}
\DeclareMathOperator{\Ch}{Ch}
\DeclareMathOperator{\wt}{wt}
\DeclareMathOperator{\E}{\mathcal{E}}
\DeclareMathOperator{\topp}{top}
\newcommand{\stat}{\text{stat}_\eta }
\DeclareMathOperator{\Q}{\mathbb{Q}}
\newcommand{\Laurent}{\mathbb{Q}\left[ A,A^{-1}\right]}
\newcommand{\Laurentdd}{\mathbb{Q}\left[ A,A^{-1}\right]_{|\pi | -\ell (\pi )}}
\DeclareMathOperator{\V}{\mathcal{V}}
\DeclarePairedDelimiter{\Kumulant}{\left(\right.}{\left.\right)}
\let\oldk\k
\let\oldc\c
\renewcommand{\c}{\kappa_\nu\Kumulant}
\renewcommand{\k}{\kappa\Kumulant}
\newcommand\A[1]{\left[ A^{#1} \right] }
\newcommand\Sym[1]{\mathfrak{S}\left( {#1} \right) }
\newcommand{\g}[1]{g_{\pi,\sigma}^{#1}}
\newtheorem{tw}{Theorem}[section]
\newtheorem{prop}[tw]{Proposition}
\newtheorem{cor}[tw]{Corollary}
\newtheorem{lem}[tw]{Lemma}
\newtheorem{con}[tw]{Conjecture}
\newtheorem{ob}[tw]{Observation}
\theoremstyle{remark}
\newtheorem{ex}[tw]{Example}
\newtheorem{rem}[tw]{Remark}
\newtheorem{de}[tw]{Definition}
\newcommand{\Gl}{\mathcal{G}^{\lambda}_{\pi,\sigma}}
\newcommand{\Glp}{\mathcal{G}^{\lambda ; \mu}_{\pi,\sigma}}
\newcommand{\Gll}{\mathcal{G}^{\lambda ; \lambda}_{\pi,\sigma}}
\newcommand{\Mll}{M^{\lambda ; \lambda}_{\pi,\sigma}}
\newcommand{\GMlp}{M \left( \mathcal{G}^{\lambda ; \mu}_{\pi,\sigma} \right) }
\newcommand{\ie}{\emph{i.e.}\ }
\newcommand{\Ml}{M^\lambda_{\pi,\sigma}}
\newcommand{\M}{M^{\bullet}_{\pi,\sigma}}
\newcommand{\Mp}{M^{\bullet;\mu}_{\pi,\sigma}}
\newcommand{\Mlp}{M^{\lambda;\mu}_{\pi,\sigma}}
\newcommand{\MOlp}{{\widetilde{M}}^{\lambda; \mu}_{\pi,\sigma}}
\newcommand{\GOlp}{\mathcal{\widetilde{G}}^{\lambda ; \mu}_{\pi,\sigma}}
\newcommand{\MOp}{{\widetilde{M}}^{\bullet; \mu}_{\pi,\sigma}}
\newcommand{\MOpj}{{\widetilde{M}}^{\bullet; \mu}_{\pi \cup 1^{|\mu | - |\pi|},\sigma \cup 1^{|\mu | - |\sigma|}}}
\newcommand{\MOpjj}{{\widetilde{M}}^{\bullet; \mu}_{\pi ,\sigma \cup 1}}
\newcommand{\MOpl}{{\widetilde{M}}^{\bullet; \lambda}_{\pi,\sigma}}
\newcommand{\subalign}[1]{
  \vcenter{
    \Let@ \restore@math@cr \default@tag
    \baselineskip\fontdimen10 \scriptfont\tw@
    \advance\baselineskip\fontdimen12 \scriptfont\tw@
    \lineskip\thr@@\fontdimen8 \scriptfont\thr@@
    \lineskiplimit\lineskip
    \ialign{\hfil$\m@th\scriptstyle##$&$\m@th\scriptstyle{}##$\crcr
      #1\crcr
    }
  }
}
\begin{document}
\begin{abstract}
In 1996 Goulden and Jackson introduced a family of coefficients $( c_{\pi, \sigma}^{\lambda} ) $ indexed by triples of partitions which arise in the power sum expansion
of some Cauchy sum for Jack symmetric functions $( J^{(\alpha )}_\pi )$. 
The coefficients $ c_{\pi, \sigma}^{\lambda}  $ can be viewed as an interpolation between the structure constants of the class algebra and the double coset algebra. Goulden and Jackson suggested that the coefficients $ c_{\pi, \sigma}^{\lambda}  $ are polynomials in the variable $\beta := \alpha-1$ with non-negative integer coefficients and that there is a combinatorics of matching hidden behind them. 
This \emph{Matchings-Jack Conjecture} remains open. 
Do\l{}\oldk{e}ga and F\'eray showed the polynomiality of connection coefficients $c^\lambda_{\pi,\sigma}$ and gave the upper bound on the degrees. 
We give a necessary and sufficient condition for the polynomial $ c_{\pi, \sigma}^{\lambda}$ to achieve this bound.  
We show that the leading coefficient of $ c_{\pi, \sigma}^{\lambda}$ is a positive integer and we present it in the context of Matchings-Jack Conjecture of Goulden and Jackson. 
\end{abstract}

\maketitle

\section{Introduction}
\label{section1}

\subsection{Jack polynomials.}
\label{sec:jack}
Jack polynomials $\left(  J^{(\alpha )}_\pi\right) $ are a family of symmetric functions that depend on a parameter $\alpha >0$ and is indexed by an integer partition $\pi$. They were introduced by Henry Jack in his seminal paper \cite{Jack1970/1971}. 
For certain values of $\alpha$, Jack polynomials coincide with various well-known symmetric polynomials. 
For instance, up to multiplicative constants, Jack polynomials coincide with Schur polynomials for $\alpha =1$; 
with the zonal polynomials, for $\alpha = 2$; 
with the symplectic zonal polynomials, for $\alpha = 1/2$; 
with the elementary symmetric functions, for $\alpha = 0$; 
and in some sense with the monomial symmetric functions, for $\alpha = \infty$. 
Since it has been shown that several results concerning Schur and zonal polynomials can be generalized in a rather natural way to Jack polynomials \cite[Section (VI.10)]{MR3443860}, 
Jack polynomials can be viewed as a natural interpolation between several interesting families of symmetric functions.

Connections of Jack polynomials with various fields of mathematics and physics were established:
it turned out that they play a crucial role in understanding Ewens random permutations model \cite{MR1199122}, 
generalized $\beta$-ensembles and some statistical mechanics models 
\cite{MR1432811},
Selberg-type integrals \cite{MR1226865},
certain random partition models 
\cite{MR1756734},
and some problems of the algebraic geometry \cite{Nak96,Oko03}, 
among many others. 
Better understanding of Jack polynomials is also very desirable in the context of generalized 
$\beta$-ensembles and their discrete counterpart model \cite{MR1432811}. 
Jack polynomials are a special case of the \emph{Macdonald polynomials} \cite{Sta89,MR3443860}.

\subsection{Connection coefficients for Jack symmetric functions.}
\label{Intro2}
Goulden and Jackson \cite{MR1325917} defined two families of coefficients $\left( c_{\pi, \sigma}^{\lambda} \right) $ and $\left( h_{\pi, \sigma}^{\lambda} \right)$ depending implicitly on the deformation parameter~$\alpha$ and indexed by triples of integer partitions $\pi ,\sigma,\lambda \vdash n$ of the same integer $n$. 
These coefficients are given by expansions of the left-hand sides in terms of 
the power-sum symmetric functions:
\begin{multline}
\label{star} 
\sum_{\theta\in\mathcal{P}} \frac{1}{\langle J_\theta,J_\theta \rangle_\alpha} 
J^{(\alpha)}_\theta(\mathbf{x})
J^{(\alpha)}_\theta(\mathbf{y})
J^{(\alpha)}_\theta(\mathbf{z})
t^{|\theta|}=\\
   \sum_{n\geq 1} t^n \sum_{\lambda,\pi,\sigma\vdash n}
\frac{c_{\pi, \sigma}^{\lambda}}{\alpha^{\ell(\lambda)}}
z_\lambda^{-1}
p_\pi(\mathbf{x})
p_\sigma(\mathbf{y})
p_\lambda(\mathbf{z}),
\end{multline}
and
\begin{multline}
\label{star2} 
\alpha t\dfrac{\partial}{\partial t} \log \left( \sum_{\theta\in\mathcal{P}} \frac{1}{\langle J_\theta,J_\theta \rangle_\alpha} 
J^{(\alpha)}_\theta(\mathbf{x})
J^{(\alpha)}_\theta(\mathbf{y})
J^{(\alpha)}_\theta(\mathbf{z})
t^{|\theta|}\right) = \\
   \sum_{n\geq 1} t^n \sum_{\lambda,\pi,\sigma\vdash n}
h_{\pi, \sigma}^{\lambda}
p_\pi(\mathbf{x})
p_\sigma(\mathbf{y})
p_\lambda(\mathbf{z}),
\end{multline}
see \cite[Equations (1),(5) and Equations (2),(4)]{MR1325917}.

Do\l{}\oldk{e}ga and F\'eray showed that the connection coefficients $(c^\lambda_{\pi,\sigma})$ are polynomials in the variable $\beta: =\alpha-1$ with rational coefficients and proved the following upper bound on the degrees of these polynomials \cite[Proposition B.2.]{Dolega2014}:
\begin{equation}
\label{Max}
\deg_\beta c^\lambda_{\pi,\sigma} \leq d \left( \pi,\sigma ;\lambda \right) ,
\end{equation} 
\noindent
where 
\[ d \left( \pi,\sigma ;\lambda \right) := \Big( |\pi | - \ell (\pi ) \Big) +\Big( |\sigma | - \ell (\sigma ) \Big) -
\Big( |\lambda| - \ell (\lambda ) \Big) .
\]
\noindent
One may wonder of the use of the new variable $\beta$, but this shift seems to be the adequate one in order to look at the connection coefficients from the combinatorial point of view.

\subsection{Matchings.}
\label{prob}

We present the well established terminology of matching given in \cite{MR1325917}. 
For a given integer $n$ we consider the following set
\begin{equation*}
\mathcal{N}_n =\left\lbrace 1, \hat{1}, \ldots , n, \hat{n} \right\rbrace .
\end{equation*}
\noindent
We denote by $\mathcal{F}_n$ the set of all matchings (partitions on two-elements sets) on $\mathcal{N}_n$. 
For matchings $\delta_1,\delta_2,\ldots\in  \mathcal{F}_n$ we denote by $G (\delta_1,\delta_2, \ldots) $ the multi-graph with the vertex set $\mathcal{N}_n$ whose edges are formed by the pairs in $\delta_1,\delta_2 ,\ldots$. 
For given matchings $\delta_1,\delta_2$ the corresponding graph $G (\delta_1,\delta_2) $ consists of disjoint even cycles, since each vertex has degree $2$ and around each cycle the edges alternate between $\delta_1$ and $\delta_2$. 
Denote by $\Lambda (\delta_1,\delta_2)$ the partition of $n$ which specifies halves the lengths of the cycles in $G (\delta_1,\delta_2) $. 
More generally, denote by $\Lambda (\delta_1,\ldots, \delta_s)$ the partition of $n$ which specifies halves of the number of vertices in each connected component of $G (\delta_1,\delta_2,\ldots) $ (it is an easy observation that such numbers form a partition of $n$).

We call the sets $\{ 1, \ldots , n \}$ and $\{\hat{1}, \ldots , \hat{n}\}$ \emph{classes} of $\mathcal{N}_n $. 
A pair in a matching is called a \emph{between-class} pair if it contains elements of different classes. 
A matching $\delta$ in which every pair is a between-class pair is called a \emph{bipartite} matching (in this case $G (\delta )$ is a bipartite graph on the vertex-sets given by the two classes of $\mathcal{N}_n $).

We introduce two specific bipartite matchings in the set $\mathcal{F}_n$. First, let
\begin{equation*}
\epsilon :=\left\lbrace \{1,\hat{1} \},\ldots , \{ n,\hat{n}\}\right\rbrace ;
\end{equation*}
\noindent
second, for a given partition $\mu\vdash n$, let 
\begin{multline*}
\delta_\lambda := \left\lbrace 
\{1,\hat{2} \}, \{2,\hat{3} \}, \ldots ,\{\lambda_1 -1, \hat{\lambda_1} \}, \{\lambda_1 ,\hat{1} \}, \right. \\
\left. \{\lambda_1+1,\hat{\lambda_1+2} \}, \ldots ,\{\lambda_1+\lambda_1 -1, \hat{\lambda_1+\lambda_1} \}, \{\lambda_1+\lambda_1 ,\hat{\lambda_1+1} \},
\ldots 
\right\rbrace ,
\end{multline*}
\noindent
see \cref{matching}. Observe that both matchings: $\epsilon$ and $\delta_\lambda$ are bipartite and $\Lambda ( \epsilon, \delta_\lambda ) =\lambda$.

\begin{figure}
\centering
\begin{tikzpicture}[scale=0.3]
\draw[gray,dashed] (3,10)  -- (7,10) node [midway, below, sloped] (TextNode) {};
\draw[gray,dashed] (3,0)  -- (7,0) node [midway, below, sloped] (TextNode) {};
\draw[gray,dashed] (0,3)  -- (0,7) node [midway, below, sloped] (TextNode) {};
\draw[gray,dashed] (10,3)  -- (10,7) node [midway, below, sloped] (TextNode) {};

\draw[black] (3,10)  -- (0,7) node [midway, below, sloped] (TextNode) {};
\draw[black] (10,3)  -- (7,0) node [midway, below, sloped] (TextNode) {};
\draw[black] (0,3)  -- (3,0) node [midway, below, sloped] (TextNode) {};
\draw[black] (10,7)  -- (7,10) node [midway, below, sloped] (TextNode) {};

\filldraw[black] (3,10) circle (4pt) node[anchor=south] {$\hat{1}$};
\filldraw[black] (10,7) circle (4pt) node[anchor=west] {$\hat{2}$};
\filldraw[black] (7,0) circle (4pt) node[anchor=north] {$\hat{3}$};
\filldraw[black] (0,3) circle (4pt) node[anchor=east] {$\hat{4}$};

\fill[white] (7,10) circle (4pt) node[anchor=east] {};
\draw[black] (7,10) circle (4pt) node[anchor=south] {$1$};
\fill[white] (10,3) circle (4pt) node[anchor=east] {};
\draw[black] (10,3) circle (4pt) node[anchor=west] {$2$};

\fill[white] (3,3) circle (4pt) node[anchor=east] {};
\draw[black] (3,0) circle (4pt) node[anchor=north] {$3$};
\fill[white] (0,7) circle (4pt) node[anchor=east] {};
\draw[black] (0,7) circle (4pt) node[anchor=east] {$4$};

\draw[gray,dashed] (17,7)  -- (21,7) node [midway, below, sloped] (TextNode) {};
\draw[gray,dashed] (17,3)  -- (21,3) node [midway, below, sloped] (TextNode) {};

\draw[black] (17,3)  -- (17,7) node [midway, below, sloped] (TextNode) {};
\draw[black] (21,3)  -- (21,7) node [midway, below, sloped] (TextNode) {};

\filldraw[black] (17,7) circle (4pt) node[anchor=south] {$\hat{5}$};
\filldraw[black] (21,3) circle (4pt) node[anchor=north] {$\hat{6}$};

\fill[white] (21,7) circle (4pt) node[anchor=east] {};
\draw[black] (21,7) circle (4pt) node[anchor=south] {$5$};
\fill[white] (17,3) circle (4pt) node[anchor=east] {};
\draw[black] (17,3) circle (4pt) node[anchor=north] {$6$};

\draw[gray,dashed] (27,7)  -- (31,7) node [midway, below, sloped] (TextNode) {};
\draw[gray,dashed] (27,3)  -- (31,3) node [midway, below, sloped] (TextNode) {};

\draw[black] (27,3)  -- (27,7) node [midway, below, sloped] (TextNode) {};
\draw[black] (31,3)  -- (31,7) node [midway, below, sloped] (TextNode) {};

\filldraw[black] (27,7) circle (4pt) node[anchor=south] {$\hat{7}$};
\filldraw[black] (31,3) circle (4pt) node[anchor=north] {$\hat{8}$};

\fill[white] (31,7) circle (4pt) node[anchor=east] {};
\draw[black] (31,7) circle (4pt) node[anchor=south] {$7$};
\fill[white] (27,3) circle (4pt) node[anchor=east] {};
\draw[black] (27,3) circle (4pt) node[anchor=north] {$8$};

\draw[black] (15,-2) circle (0pt) node[anchor=north] {$\lambda =\left( 4,2,2 \right) $};

\draw[black] (15,-4.7) circle (0pt) node[anchor=north] {$\delta_\lambda := \Big\{ 
\underbrace{\{1,\hat{2} \}, \{2,\hat{3} \},\{3,\hat{4} \}, \{4,\hat{1} \}}_{\lambda_1 =4},
\underbrace{\{5,\hat{6} \}, \{6,\hat{5} \}}_{\lambda_2 =2},
\underbrace{\{7,\hat{8} \}, \{8,\hat{7} \}}_{\lambda_3 =2} \Big\} $};
\end{tikzpicture}
\caption{An example of matchings $\epsilon$ (dotted line) and $\delta_\lambda$ (continuous line) for $\lambda = (4,2,2) $. Observe that both matchings: $\epsilon$ and $\delta_\lambda$ are bipartite and $\Lambda ( \epsilon, \delta_\lambda ) =\lambda$.} 
\label{matching}
\end{figure}
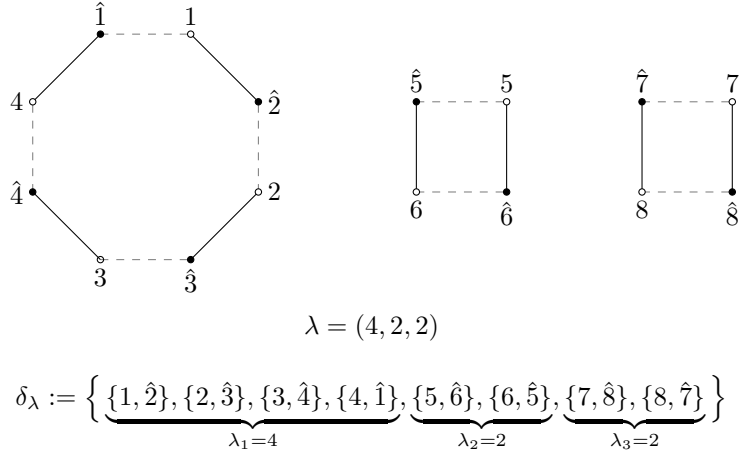

\subsection{Matchings-Jack Conjecture.}
\label{444}

\begin{de}
\label{g}
For given three partitions $\pi,\sigma , \lambda \vdash n$, we denote by $\Gl$ the set of all matchings $\delta \in \mathcal{F}_n$, for which $\Lambda (\delta,\epsilon ) =\pi$ and $
\Lambda (\delta,\delta_\lambda ) =\sigma$.
\end{de}

Goulden and Jackson observed that the specializations of $c_{\pi, \sigma}^{\lambda} (\beta )$ for $\beta \in \{0,1\}$ may be expressed in terms of matchings, namely
\begin{align*} 
c_{\pi, \sigma}^{\lambda} ( 0 ) &= \Big\vert \big\{ \delta \in \mathcal{G}_{\pi, \sigma}^{\lambda} : 
 \delta \text{ is bipartite} \big\} \Big\vert ,\\
c_{\pi, \sigma}^{\lambda} ( 1 )&=  \Big\vert  \big\{ \delta \in \mathcal{G}_{\pi, \sigma}^{\lambda} \big\} \Big\vert. 
\end{align*} 
\noindent
In fact, those specialisations coincide with the connection coefficients of two commutative subalgebras of the group algebra of the symmetric group: the \emph{class algebra} and the \emph{double coset algebra} ($\beta =0$ and $\beta =1$ respectively) \cite{Sta91}. 

Based on this observation Goulden and Jackson conjectured that the family $\left( c_{\pi, \sigma}^{\lambda} \right) $  of polynomials may have a combinatorial interpretation. 
The conjecture is known as the \emph{Matchings-Jack Conjecture}.

\begin{con}[Matchings-Jack Conjecture]
For any partitions $\pi,\sigma,\lambda\vdash n$ the quantity $c_{\pi, \sigma}^{\lambda}$ can be expressed as
\begin{equation*}
c_{\pi, \sigma}^{\lambda} (\beta ) =\sum_{\delta \in \mathcal{G}_{\pi, \sigma}^{\lambda}} \beta^{\wt_\lambda (\delta )},
\end{equation*}
\noindent
where $\wt_\lambda : \mathcal{G}_{\pi, \sigma}^{\lambda} \longrightarrow \mathbb{N}_0$ is some hypothetical combinatorial statistic, which vanishes if and only if $\delta $ is bipartite.
\end{con}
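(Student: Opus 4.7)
The plan is to attempt the conjecture by producing an explicit candidate statistic $\wt_\lambda$ and verifying coefficient by coefficient that the weighted enumeration on $\Gl$ reproduces $c_{\pi,\sigma}^\lambda(\beta)$. Goulden and Jackson's identities pin down the two endpoint specialisations, and the Dołęga–Féray bound \eqref{Max} tells us the polynomial lives in degrees $0,\ldots,d(\pi,\sigma;\lambda)$; so a candidate statistic must at minimum be non-negative integer valued, vanish exactly on bipartite $\delta$, take all values at most $d(\pi,\sigma;\lambda)$, and satisfy $\sum_{\delta\in \Gl}1=c_{\pi,\sigma}^\lambda(1)$. The natural guiding principle, by analogy with the Chapuy–Dołęga treatment of the top-degree part of the $b$-conjecture for the $h$-coefficients, is to read $\wt_\lambda(\delta)$ as a \emph{measure of non-orientability} of the three-matching object $G(\delta,\epsilon,\delta_\lambda)$, obtained from a canonical traversal of its connected components that detects, locally, how each pair of $\delta$ obstructs a bipartite two-colouring compatible with $\epsilon$ and $\delta_\lambda$.

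First I would fix a linear order on $\mathcal{N}_n$ (a root) and, by peeling off the pair of $\delta$ containing the root, split $\Gl$ into subsets indexed by the type of the root pair (between-class vs.\ within-class, and the two cycles of $G(\delta,\epsilon)$ and $G(\delta,\delta_\lambda)$ that it joins or splits). This deletion/contraction operation should yield a combinatorial recursion expressing $\sum_{\delta\in\Gl}\beta^{\wt_\lambda(\delta)}$ as a $\mathbb{Z}[\beta]$-linear combination of analogous sums for smaller data. In parallel, on the algebraic side, I would differentiate \eqref{star} with respect to one of the alphabets and translate the result, via the Jack character / cumulant machinery already used in the polynomiality proof of Dołęga–Féray, into a recursion for $c_{\pi,\sigma}^\lambda(\beta)$ of the same shape. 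If the two recursions match and the base case $n=1$ is trivial, an induction on $n$ closes the argument.

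The main obstacle — and the reason the conjecture has been open since 1996 — is the choice of $\wt_\lambda$. Essentially every naive candidate (number of non-bipartite cycles in $G(\delta,\epsilon)$, minimum number of pair-swaps to reach a bipartite matching, etc.) succeeds at $\beta=0,1$ and usually also at the top degree but fails at an intermediate degree. The delicate point is that $\wt_\lambda$ must interact with the \emph{joint} combinatorics of $\delta$ relative to both reference matchings $\epsilon$ and $\delta_\lambda$, not merely one of them; this is why the problem is genuinely harder than the analogous $h$-statement, where a single hyperedge-incidence structure suffices. I would therefore first compute $c_{\pi,\sigma}^\lambda(\beta)$ explicitly for all $\pi,\sigma,\lambda\vdash n$ with $n\le 6$, then search among statistics of the form "number of twists encountered during a canonical DFS of $G(\delta,\epsilon,\delta_\lambda)$" for one that matches all computed polynomials, and only then attempt to prove that its root-peeling recursion coincides with the algebraic recursion above.

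Even granting the right $\wt_\lambda$, a secondary obstacle is that the algebraic recursion from \eqref{star} naturally lives in the Jack character basis, whereas the combinatorial recursion lives in the power-sum basis; bridging the two requires controlling the top-degree cumulants of Jack characters (as in the present paper), but now in \emph{every} $\beta$-degree rather than only the top one. Consequently I expect any full proof along these lines to proceed by a simultaneous induction on $n$ and a secondary induction on the co-degree $d(\pi,\sigma;\lambda)-k$, using the top-$\beta$-degree results established earlier in this paper as the innermost base case.
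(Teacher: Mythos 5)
This statement is a \emph{conjecture}, not a theorem: the paper explicitly says the Matchings-Jack Conjecture ``remains still open in the general case'' and offers no proof of it; the paper's actual contribution is restricted to the single top-degree coefficient $\left[\beta^{d(\pi,\sigma;\lambda)}\right]c^\lambda_{\pi,\sigma}$, and even there the statistic $\stat$ is only constructed on the subclass $\Gll$ of $\lambda$-connected matchings, not on all of $\Gl$. So there is no proof in the paper against which to compare yours.

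Your proposal is a research plan rather than a proof, and the gap is exactly the one you yourself name: you never define $\wt_\lambda$. Everything downstream — the root-peeling recursion on $\Gl$, the matching algebraic recursion extracted from \cref{star}, the double induction on $n$ and co-degree — is conditional on having a statistic whose deletion recursion closes, and you concede that every candidate you can name ``fails at an intermediate degree.'' Two further points would still need work even with a candidate in hand. First, the obstruction the paper identifies in \cref{problemy} is structural: La Croix's measure of non-orientability requires a canonical rooting of connected components, and a general matching in $\Gl$ (equivalently, a collection of maps with rooted faces) admits no canonical such rooting unless the number of faces equals the number of components — this is precisely why the paper retreats to $\Gll$. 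Your ``fix a linear order on $\mathcal{N}_n$ and peel the root pair'' does not resolve this, because the resulting statistic will generically depend on arbitrary choices and the recursion will not be well defined on isomorphism classes. Second, your claim that candidate statistics ``usually also'' succeed at the top degree is not free: establishing it for any specific candidate is essentially the content of \cref{C-main result}, which requires the full machinery of \cref{5555,6666}. As it stands, the proposal identifies the right landmarks but does not prove the conjecture, and it should not be presented as a proof of the stated result.
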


Clearly, it seems that the statistic $\wt_\lambda$ should be a marker of non-bipartiteness for matchings. 
Matchings-Jack Conjecture remains still open in the general case, however some special cases have been settled. 
Goulden and Jackson constructed some statistics $\wt_\lambda$ for $\lambda = [1^n]$ and $\lambda= [2, 1^{n-1}]$ and proved the conjecture in those cases \cite{MR1325917}. 
Later on, the Matchings-Jack Conjecture has been proved by Kanunnikov and Vassiliveva \cite{Vass16} in the case $\pi =\sigma =(n)$ of the partitions with exactly one part. 
Recently, in a joint paper with Promyslov \cite{Vass18}, they proved the conjecture in the case when one of the three partitions is equal to $(n)$. 
They made use of the \emph{measure of non-orientability} $\theta$ defined by La Croix in his PhD thesis \cite{Lacroix}. The measure of non-orientability $\theta$ is a statistic defined on a class of \emph{rooted maps}. 
In some special cases it may be translated into the field of matchings, however generally significant difficulties appear. 
We also shall use the same statistic.

\subsection{The main result.}
In the current paper we give a necessary and sufficient condition for the polynomial $c^\lambda_{\pi,\sigma}$ to achieve the maximal degree given by \cref{Max}. 
Moreover, we show that the leading coefficient of $c^\lambda_{\pi,\sigma}$ of this maximal degree is a non-negative integer and we present it in the context of Matchings-Jack Conjecture.

\begin{de}
\label{order}
Consider two integer partitions $\lambda$ and $\mu$ of the same integer $n$, let $k =\ell (\lambda)$ and $m=\ell (\mu )$ be the lengths of the partitions. 
We say that $\lambda$ is a \emph{subpartition} of $\mu$ (denoted $\lambda \preceq \mu$) if there exists a set-partition $\nu$ of $[k]$, such that
\begin{equation*}
\mu_i = \sum_{j \in \nu_i} \lambda_j
\end{equation*}
\noindent
for any $i\in [m]$, see \cref{fig00}. We denote $\lambda \prec\mu $ if $\lambda \preceq \mu$ and $\lambda \neq \mu$.
\end{de}

\begin{figure}
\centering
\begin{tikzpicture}[scale=0.6]
\draw[gray] (8,5) rectangle (9,6);

\filldraw[fill=blue!40!white, draw=black] (8,3) rectangle (9,4) ; 
\filldraw[fill=blue!40!white, draw=black] (9,3) rectangle (10,4);
\filldraw[fill=blue!40!white, draw=black] (10,3) rectangle (11,4);
\filldraw[fill=blue!40!white, draw=black] (11,3) rectangle (12,4) ;
\filldraw[fill=red!40!white, draw=black] (8,4) rectangle (9,5);
\filldraw[fill=red!40!white, draw=black] (9,4) rectangle (10,5) ;
\filldraw[fill=red!40!white, draw=black] (10,4) rectangle (11,5) ;
\filldraw[fill=blue!40!white, draw=black] (8,5) rectangle (9,6);
\filldraw[fill=green!40!white, draw=black] (8,6) rectangle (9,7);

\begin{scope}[shift={(9,0)}]
\filldraw[fill=blue!40!white, draw=black] (8,3) rectangle (9,4) ; 
\filldraw[fill=blue!40!white, draw=black] (9,3) rectangle (10,4);
\filldraw[fill=blue!40!white, draw=black] (10,3) rectangle (11,4);
\filldraw[fill=blue!40!white, draw=black] (11,3) rectangle (12,4) ;
\filldraw[fill=blue!40!white, draw=black] (12,3) rectangle (13,4) ;
\filldraw[fill=red!40!white, draw=black] (8,4) rectangle (9,5);
\filldraw[fill=red!40!white, draw=black] (9,4) rectangle (10,5) ;
\filldraw[fill=red!40!white, draw=black] (10,4) rectangle (11,5) ;
\filldraw[fill=green!40!white, draw=black] (8,5) rectangle (9,6);
\end{scope}

\draw[->,blue] (12.5,3.5) -- (16.5,3.5);
\draw[->,blue,rounded corners=8pt] (12.5,5.5)-- (13.5,5.5) -- (14.3,3.5) -- (16.5,3.5);
\draw[red] (12.5,4.5) -- (13.7,4.5);
\draw[->,red] (14.1,4.5) -- (16.5,4.5);
\draw[->,green,rounded corners=8pt] (12.5,6.5)-- (13.7,6.5) -- (14.6,5.5) -- (16.5,5.5);
\end{tikzpicture}
\caption{A pair of partitions $\lambda = (4,3,1,1)$ and $\mu = (5,3,1)$ presented as Young diagrams. Partition $\lambda$ is sub-partition of $\mu$; indeed, each part of $\mu$ is given as a sum of different parts of $\lambda$.} 
\label{fig00}
\end{figure}
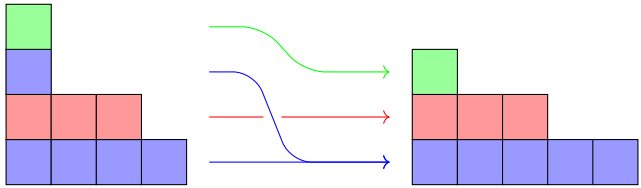

\begin{de}
For given partitions $\pi,\sigma , \lambda, \mu \vdash n$, we denote by $ \Glp$ the set of all matchings $\delta \in \Gl$ which are $\mu$-\emph{connected}, \ie $\Lambda(\delta,\epsilon,\delta_\lambda )=\mu$.
\end{de}

The class $\Gl$ splits naturally into the classes $\Glp$, namely
\begin{equation*}
\Gl =\bigsqcup_{\mu : \lambda \preceq \mu} \Glp .
\end{equation*}

Contrary to previous works on the Matchings-Jack Conjecture we do not attempt to define the statistic $\wt_\lambda$ on $\Gl$ for a particular class of partitions $\lambda$, $\pi$ or $\sigma$. 
We define the statistic "$\stat$" on the class $\Gll$.

\begin{tw}[The main result]
\label{C-main result}
For any triple of partitions $\pi,\sigma ,\lambda \vdash n$ the corresponding polynomial $c^\lambda_{\pi,\sigma}(\beta)$ achieves the upper bound on the degree given in \cref{Max} if and only if $\pi$ and $\sigma$ are sub-partitions of $\mu$. 
For such partitions, the leading coefficient of $c^\lambda_{\pi,\sigma}(\beta)$ may be expressed in two different manners:
\begin{align*} 
\left[ \beta^{d (\pi, \sigma ;\lambda )}\right] c^\lambda_{\pi ,\sigma}= 
\Big\vert \delta \in \Gll : \delta &\emph{ is unhandled }  \Big\vert = \\
&\sum_{\nu : \nu \preceq \lambda} \dfrac{z_\lambda}{z_\nu} \Big\vert \delta \in \mathcal{G}_{\pi, \sigma}^{\nu ;\lambda}  : \delta\emph{ is bipartite }\Big\vert ,
\end{align*} 
\noindent
for notion of unhandled matchings see \cref{unhandledM}.  
Moreover, there exists a statistic $\stat : \Gll \longrightarrow  \mathbb{N}_0$, which satisfies
\begin{equation*}
\left[ \beta^{d (\pi, \sigma ;\lambda )}\right] c^\lambda_{\pi ,\sigma}= 
\left[ \beta^{d (\pi, \sigma ;\lambda )}\right] \sum_{\delta \in \Gll} \beta^{\stat (\delta)}
\end{equation*}
\noindent
and for $\delta \in \Gll$ the statistic $\stat (\delta )$ vanishes if and only if $\delta$ is bipartite.
\end{tw}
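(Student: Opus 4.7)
The plan is to express $c^\lambda_{\pi,\sigma}$ via the triple Jack character formula obtained by expanding the Cauchy-type identity \cref{star} against power-sum symmetric functions, and then extract the top degree in $\beta$ using cumulant machinery (the operators $\kappa$ and $\kumuDisjointPoint$ in the preamble strongly suggest this is the paper's toolkit). The Dołęga-F\'eray bound \cref{Max} itself comes from their control over the top degree of individual Jack characters in terms of maps; I would leverage that description to reduce $\bigl[\beta^{d(\pi,\sigma;\lambda)}\bigr] c^\lambda_{\pi,\sigma}$ to a single genuine cumulant of Jack characters, and then unpack its combinatorial content in terms of matchings.

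For the "if and only if" clause I would use the splitting $\Gl = \bigsqcup_{\mu : \lambda \preceq \mu}\Glp$ and argue that only the stratum $\mu = \lambda$, namely $\Gll$, contributes to the leading degree; each additional connected component of $G(\delta,\epsilon,\delta_\lambda)$ beyond those forced by $\lambda$ should cost one unit of $\beta$-degree in the cumulant expansion. It then remains to show that $\Gll \neq \emptyset$ is equivalent to $\pi, \sigma \preceq \lambda$ in the sense of \cref{order}: any cycle of $G(\delta,\epsilon)$ of length $2\pi_i$ must fit inside a single connected component of $G(\delta,\epsilon,\delta_\lambda)$ of size $2\lambda_j$, and analogously for $\sigma$, giving a set-partition of the parts of $\pi$ (respectively $\sigma$) that sums to $\lambda$. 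The reverse implication should follow by exhibiting enough matchings in $\Gll$ (for instance the bipartite ones) to ensure non-vanishing of the leading cumulant.

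The two expressions for the leading coefficient should correspond to two ways of organizing the same sum. The count of unhandled matchings in $\Gll$ (in the sense of \cref{unhandledM}) should emerge directly from the map-theoretic interpretation of the top-degree part of Jack characters; the equivalent weighted sum $\sum_{\nu \preceq \lambda} \frac{z_\lambda}{z_\nu}\bigl|\{\delta \in \mathcal{G}^{\nu;\lambda}_{\pi,\sigma} : \delta \text{ bipartite}\}\bigr|$ should then be obtained by regrouping the unhandled matchings of $\Gll$ according to a coarser bipartite connectivity pattern $\nu$, with the factor $z_\lambda/z_\nu$ accounting for the number of ways a bipartite $\nu$-refinement lifts to a $\lambda$-connected structure; a double-counting or inclusion-exclusion argument should close this.

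Finally, to construct $\stat$, I would adapt La Croix's measure of non-orientability $\theta$ for rooted maps, following the same broad scheme as Kanunnikov-Promyslov-Vassilieva \cite{Vass18}, to the class $\Gll$ of matchings, by choosing a canonical rooting of the tripartite graph $G(\delta,\epsilon,\delta_\lambda)$ and setting $\stat(\delta)$ to be $\theta$ of the resulting rooted map. The main obstacle, in my view, is to make the rooting choice in such a way that two properties fall out simultaneously: that $\stat(\delta) = 0$ holds exactly for bipartite $\delta$, and that the top coefficient of $\sum_{\delta \in \Gll}\beta^{\stat(\delta)}$ agrees with the unhandled-matching count from the previous step. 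Matching both at once — and circumventing the difficulties noted in the introduction when passing from maps to matchings beyond the single-part cases — seems to be the crux of the proof.
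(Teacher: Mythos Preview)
Your overall architecture is reasonable, but the main algebraic engine you guess at is not the one the paper uses. The cumulant macros in the preamble are red herrings: the paper does not run a cumulant expansion of the Cauchy sum. Instead, it passes through the \emph{structure constants} $g^\mu_{\pi,\sigma}$ of the normalized Jack characters $\Ch_\pi$ (\cref{sec:jack-first-definition}). The relation between $c^\lambda_{\pi,\sigma}$ and $g^{\tilde\lambda}_{\tilde\pi,\tilde\sigma}$ (\cref{5.67}, using \cref{connection}) reduces the top coefficient of $c$ to the top coefficient of a single structure constant, so no stratification argument over $\Glp$ is needed on the $c$-side at all. The heart of the proof is then \cref{main result}, which identifies $[\delta^{d(\pi,\sigma;\mu)}]g^\mu_{\pi,\sigma}$ with the cardinality of an explicit set $P^\mu_{\pi,\sigma}$ of orientable map collections built by a ``hands-shaking procedure''. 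The key algebraic input there is \cref{AtopCh}: the $A$-top-degree of $\Ch_\pi(\lambda)$ equals the number $N_{G_\pi}(\lambda)$ of injective embeddings of a simple graph into the Young diagram $\lambda$; multiplying two such embedding counts and re-expanding in the basis $\widehat{p_\mu}$ produces the candidates $p^\mu_{\pi,\sigma}=|P^\mu_{\pi,\sigma}|$. The ``if and only if'' clause then falls out of the non-emptiness criterion for $P^\mu_{\pi,\sigma}$ (\cref{strange6}), not from a direct analysis of $\Gll$.

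Your plan for $\stat$ is essentially the one adopted: the paper does adapt La Croix's $\eta$ via the matching--map dictionary of \cref{42}. The rooting obstacle you flag is real, and the paper resolves it exactly by restricting to $\Gll$: when the face partition equals the connected-component partition, rooting and numbering faces \emph{is} rooting and numbering components, so $\delta\mapsto M_\delta$ lands in $M^{\lambda;\lambda}_{\pi,\sigma}$ and $\stat(\delta):=\eta(M_\delta)$ is well-defined without further choices (\cref{problemy}). The two displayed expressions for the leading coefficient are then obtained not by inclusion--exclusion but by combining Do\l{}\k{e}ga's unhandled/orientable bijection on unicellular maps (\cref{wniosek}) with the counting relations between matchings and lists of maps (\cref{strange2}); see \cref{lemlem}. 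So your double-counting intuition is in the right spirit, but the actual factor $z_\lambda/z_\nu$ arises from comparing face-rootings to component-rootings, not from lifting bipartite $\nu$-refinements.
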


\subsection{Organisation of the paper.}
In \cref{3333} we introduce the terminology of maps and we investigate relations between maps and matchings. 
In \cref{4444} we present a measure of non-orientability in the context of $b$-Conjecture. 
We present a problem of transferring it into the satisfactory statistic which measures non-bipartitness of a matching. 
We discuss the recent result of Do\l{}\oldk{e}ga \cite{Dol17} about the top-degree part in $b$-Conjecture. 
In fact, our result and the result of Do\l{}\oldk{e}ga are equivalent, see \cref{appendix}, however the methods used in both papers are different. 
\cref{5555} presents the notions of Jack characters and their structure constants. 
We show the relation between the structure constants for Jack characters and the connection coefficients for Jack symmetric functions. 
We give a formula for top-degree part of structure constants for Jack characters and translate this result into the field of connection coefficients. We prove this formula in \cref{6666}.

\section{Preliminaries}
\label{p}

\subsection{Partitions.}
\label{p1}
A \emph{partition} $\lambda$ of $n$ (denoted by $\lambda\vdash n$) is a non-increasing list 
$\left( \lambda_1, \ldots ,\lambda_l\right) $ of positive integers of sum equal to $n$. 
Number $n$ is called the \emph{size} of $\lambda$ and is denoted by $| \lambda |$, the number $l$ is the \emph{length} of the partition, denoted by $\ell \left( \lambda \right)$. Finally,
\begin{equation*}
m_i(\lambda):=\big|\{k: \lambda_k=i\}\big| ,
\end{equation*}
\noindent
is the \emph{multiplicity} of $i\geq 1$ in the partition $\lambda$.

There are many orders on the set of partitions. Beside the one shown in \cref{order} we introduce the \emph{dominance order}. We say that $\lambda \leq \mu$ if and only if 
\begin{equation*}
\sum_{i\leq j} \lambda_i \leq \sum_{i\leq j} \mu_i
\end{equation*}
\noindent
holds for any positive integer $j$.

For given two partitions $\lambda $ and $\mu $ we construct their \emph{concatenation} (denoted $\lambda \cup \mu$) by merging all parts from $\lambda$ and $\mu$ and ordering them in a decreasing fashion. 

\subsection{Jack polynomials.}
\label{p2}
Let us consider the vector space $\Lambda_{\mathbb{Q}(\alpha )}$ of the \emph{symmetric functions} \cite{SymFunct} over the field of rational functions $\mathbb{Q}(\alpha )$ and its basis $\left( p_\lambda \right)_\lambda $ of \emph{power-sum symmetric functions}, \ie the symmetric functions given by
\begin{equation*}
p_\lambda(\mathbf{x}) =\prod_i p_{\lambda_i}(\mathbf{x}), \quad\quad p_{k}(\mathbf{x}) =x_1^k +x_2^k+\cdots .
\end{equation*}

The following scalar product on $\Lambda_{\mathbb{Q}(\alpha )}$ is defined on the power-sum 
basis by the formula
\begin{equation*}
\langle p_\lambda,p_\mu \rangle_\alpha :=  \alpha^{\ell (\lambda )} z_\lambda \delta_{\lambda ,\mu} ,
\end{equation*}
\noindent
where
\begin{equation*}
z_\lambda=\prod_i i^{m_i(\lambda)}\ m_i(\lambda)! 
\end{equation*}
\noindent
and further extended by bilinearity. This is a classical deformation of the \emph{Hall inner product}, which corresponds to $\alpha$ =1 \cite{Jack1970/1971}.

Jack polynomials are the only family of symmetric functions $\left(  J^{(\alpha )}_\pi\right) $ which satisfies the following three criteria:
\begin{enumerate}
\item 
$  J^{(\alpha )}_\lambda =\sum_{\mu \leq \lambda} a_\mu^\lambda m_\mu$, where $a_\mu^\lambda \in \mathbb{Q}[\alpha ]$,
\item 
$\left[ m_{\text{1}^{|\lambda |}} \right] J^{(\alpha )}_\lambda :=a_{\text{1}^{|\lambda |}}^\lambda =|\lambda | ! $,
\item 
$\langle J^{(\alpha )}_\lambda,J^{(\alpha )}_\mu \rangle_\alpha =0$ for $\lambda \neq \mu$,
\end{enumerate}
\noindent
where $ m_\lambda$ denotes the monomial symmetric function associated with $\lambda$.

\subsection{The deformation parameters.}
\label{sec:fast-forward}

In order to avoid dealing with the square root of the variable $\alpha$
we introduce an indeterminate $A$ such that $ A^2 := \alpha$. 
Jack characters are usually defined in terms of the deformation parameter $\alpha$.
After the substitution $ \alpha:= A^2 $, each Jack character becomes a function of $A$. 
In order to keep the notation light, we will make this dependence implicit and we will simply write $\Ch_{\pi}(\lambda)$.

The algebra of Laurent polynomials in the indeterminate $A$ will be denoted by $\Laurent$. 
For an integer $d$ we will say that a Laurent polynomial
\begin{equation*}
f=\sum_{k\in\mathbb{Z}} f_k A^k\in \Laurent 
\end{equation*}
\noindent
is of \emph{degree at most $d$} if $f_k=0$ holds for each integer $k>d$.

A special role will be played by the quantity
\begin{equation*}
\label{eq:gamma}
 \gamma := -A +\frac{1}{A} \in \Laurent 
\end{equation*}
\noindent
and its opposite
\begin{equation*}
\label{eq:delta}
 \delta := A -\frac{1}{A} \in \Laurent .
\end{equation*}

\section{Matchings and maps}
\label{3333}

\subsection{Maps.}
\label{sec:labeled-maps}

In the literature a \emph{map} \cite{MR2036721} is classically defined as a connected graph $G$ (possibly, with multiple edges) drawn on a surface $\Sigma$, \ie a compact connected $2$-dimensional manifold without boundary. 
We assume that a collection of \emph{faces} (\ie $\Sigma\setminus \E$) is homeomorphic to a collection of open discs. 
A choice of an edge-side and one of its endpoints is called a \emph{root} of the map, see \cref{New2a}. 
A map together with a choice of a root is called a \emph{rooted map}.

A vertex two-coloured map is called \emph{bipartite} if each edge connects vertices of different colors; for simplicity we set that there are \emph{white} and \emph{black} vertices, we denote by $\mathcal{W}$ ($\mathcal{B}$) the set of white (black) vertices. 
By convention, from a \emph{rooted bipartite} map we require that the rooted vertex is black. 
\cref{New2a} presents an example of a rooted bipartite map $M$. 
For a given bipartite map $M$ with $n$ edges we establish two integer partitions of $n$:
\begin{equation*}
\Lambda_\mathcal{W} \left( M\right) \quad \text{and} \quad \Lambda_\mathcal{B}\left( M\right) ,
\end{equation*}
\noindent
given by the degrees of white/black vertices, 
For such a map we assign also the third partition 
\begin{equation*}
\Lambda_\mathcal{F} \left( M\right) 
\end{equation*}
\noindent
of $n$, which describes the face structure of $M$; it is specified by reading halves of the numbers of edges fencing each face 
(since the map $M$ is bipartite, for each face there is an even number of edges adjacent to the face). 
The partition $\Lambda_\mathcal{F} \left( M\right)$ is called the \emph{face-type} of the map $M$.

\begin{de}
\label{Maps}
For three given partitions $\pi,\sigma ,\lambda\vdash n$ we denote by $\M$ the set of all bipartite, rooted maps $M$ with $n$ edges, for which $\Lambda_\mathcal{W} ( M ) = \pi$ and $  \Lambda_\mathcal{B} ( M ) =\sigma $. 
Moreover we denote by $\Ml$ the set of all such a maps $M$ which additionally have the face-type $\lambda$, \ie $\Lambda_\mathcal{F} ( M ) = \lambda$, see \cref{New2a}.
\end{de}

\begin{figure}
\centering
\begin{tikzpicture}[scale=0.4,
white/.style={circle,thick,draw=black,fill=white,inner sep=2pt},
white2/.style={circle,thick,draw=white,fill=white,inner sep=2pt},
black/.style={circle,draw=black,fill=black,inner sep=2pt},
connection1/.style={draw=brown,thick,auto,brown,fontscale=2},
connection2/.style={draw=blue,thick,auto,blue,fontscale=2},
connection/.style={draw=gray,thick,gray,auto}
]
\scriptsize

\fill[violet!15] (0,0) rectangle (10,10);
\fill[green!15] (8,7) -- (6,9) -- (4,7) --(4,3) ;

\draw[very thick,decoration={
    markings,
    mark=at position 1  with {\arrow{>}}},
    postaction={decorate},double=black]  
(1,7) -- (1.6,6.2);

\begin{scope}
\clip (0,0) rectangle (10,10);

\fill[orange!15] (9,-1) -- (4,3) -- (7,4) -- (11,5.7) --(10,0);
\fill[orange!15] (-2,5.7) -- (4,3) -- (1,7) -- (4.2,11) -- (0,10);

\draw (1,7)  node (b1)     [black] {};
\draw (4,3)  node (w1)     [white] {};
\coordinate (w12)       at (6,5);
\coordinate (w11)       at (4,5);
\coordinate (w1prim)      at (11,5.7);
\coordinate (w1bis)       at (4.2,11);
\coordinate (b1bis)       at (9,-1);

\draw (7,4)  node (bb1)    [black] {};
\coordinate (bb1prim) at (-2,5.7);

\draw (8,7)  node (AA)     [black] {};
\draw (6,9)  node (BA)     [white] {};
\draw (4,7)  node (AB)     [black] {};

\draw[connection]         (w1)       to  (AA);
\draw[connection]         (AA)       to  (BA);
\draw[connection]         (BA)       to  (AB);
\draw[connection]         (AB)       to  (w1);

\draw[connection]         (b1)       --    (w1);
\draw[connection]         (b1)       to  (w1bis) ;
\draw[connection]         (bb1)      to  (w1prim);

\draw[connection]         (w1)       to  (bb1);
\draw[connection]         (w1)       to  (bb1prim);
\draw[connection]         (w1)       to  (b1bis);

\end{scope}

\draw[thick,decoration={
    markings,
    mark=at position 0.333  with {\arrow{<}}},
    postaction={decorate}]  
(0,0) -- (10,0);

\draw[thick,decoration={
    markings,
    mark=at position 0.666  with {\arrow{>}}},
    postaction={decorate}]   
(0,10) -- (10,10);

\draw[thick,decoration={
    markings,
    mark=at position 0.666  with {\arrow{>>}}},
    postaction={decorate}]  
(10,0) -- (10,10);

\draw[thick,decoration={
    markings,
    mark=at position 0.333  with {\arrow{<<}}},
    postaction={decorate}]   
(0,0) -- (0,10);

\begin{scope}
\clip (10,0) rectangle (23,10);
\draw[black,fontscale=1] 
 (13,4) circle (0pt) node[anchor=west] {$\Lambda_\mathcal{F} \left( M\right)= \left( 4,2,2\right)$};
\draw[black,fontscale=1] 
 (13,2.5) circle (0pt) node[anchor=west] {$\Lambda_\mathcal{W} \left( M\right)= \left( 6,2 \right)$};
\draw[black,fontscale=1] 
 (13,1) circle (0pt) node[anchor=west] {$\Lambda_\mathcal{B} \left( M\right)= \left( 2,2,2,2\right)$};
\end{scope}

\end{tikzpicture}
\caption{Example of a \emph{rooted bipartite} map $M$ on a projective plane. 
The left side of the square should be glued to the right side, as well as bottom to top, as indicated by the arrows. 
We present also the face, white and black vertex distributions.}
\label{New2a}
\end{figure}
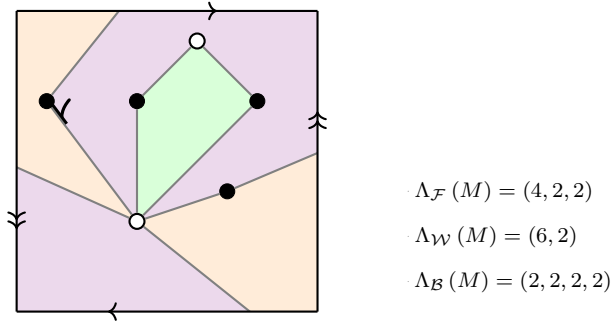

\begin{figure}
\centering
\begin{tikzpicture}[scale=0.35,
white/.style={circle,thick,draw=black,fill=white,inner sep=2pt},
white2/.style={circle,thick,draw=white,fill=white,inner sep=2pt},
black/.style={circle,draw=black,fill=black,inner sep=2pt},
connection1/.style={draw=brown,thick,auto,brown,fontscale=2},
connection2/.style={draw=blue,thick,auto,blue,fontscale=2},
connection/.style={draw=gray,thick,gray,auto}
]
\scriptsize

\fill[violet!15] (0,0) rectangle (10,10);
\draw[very thick,decoration={
    markings,
    mark=at position 1  with {\arrow{>}}},
    postaction={decorate},double=black]  
(8,5) -- (6,4);
\fill[violet!15] (8,5) -- (6,4) --(6,0) -- (10,0);

\begin{scope}
\clip (0,0) rectangle (10,10);

\draw (2,8)  node (b2)    [black] {};
\draw (8,5)  node (b1)     [black] {};
\draw (4,3)  node (w1)     [white] {};
\coordinate (L)       at (-2,5);
\coordinate (R)       at (11,4);
\coordinate (B)       at (3,0);
\coordinate (T)       at (3,10);

\draw[connection]         (w1)       to  (b1);
\draw[connection]         (b1)       to  (R);
\draw[connection]         (w1)       to  (L);
\draw[connection]         (w1)       to  (b2);
\draw[connection]         (b2)       to  (T);
\draw[connection]         (w1)       to  (B);

\end{scope}

\draw[very thick,decoration={
    markings,
    mark=at position 0.666  with {\arrow{>}}},
    postaction={decorate}]  
(0,0) -- (10,0);

\draw[very thick,decoration={
    markings,
    mark=at position 0.666  with {\arrow{>}}},
    postaction={decorate}]   
(0,10) -- (10,10);

\draw[very thick,decoration={
    markings,
    mark=at position 0.8 with {\arrow{>>}}},
    postaction={decorate}]  
(10,0) -- (10,10);

\draw[very thick,decoration={
    markings,
    mark=at position 0.8  with {\arrow{>>}}},
    postaction={decorate}]   
(0,0) -- (0,10);

\begin{scope}[shift={(12,0)}]
\clip (0,0) rectangle (10,10);

\coordinate (L)       at (-2,5);
\coordinate (R)       at (11,7);
\coordinate (B)       at (6.5,0);
\coordinate (T)       at (3.5,10);

\fill[green!15] (0,0) -- (L) -- (4,3) -- (B) -- (0,0);
\fill[green!15] (10,10) -- (R) -- (8,5)-- (4,3) --(2,8) -- (T) -- (10,10);

\draw[very thick,decoration={
    markings,
    mark=at position 1  with {\arrow{>}}},
    postaction={decorate},double=black]  
(2,8) -- (3,5.5);

\fill[orange!15] (10,0) -- (R) -- (8,5) -- (4,3) -- (B) -- (10,0);
\fill[orange!15] (0,10) -- (L) -- (4,3) -- (2,8) -- (T) -- (0,10);

\draw (2,8)  node (b2)    [black] {};
\draw (8,5)  node (b1)     [black] {};
\draw (4,3)  node (w1)     [white] {};

\draw[connection]         (w1)       to  (b1);
\draw[connection]         (b1)       to  (R);
\draw[connection]         (w1)       to  (L);
\draw[connection]         (w1)       to  (b2);
\draw[connection]         (b2)       to  (T);
\draw[connection]         (w1)       to  (B);

\end{scope}

\begin{scope}[shift={(12,0)}]
\draw[very thick,decoration={
    markings,
    mark=at position 0.333 with {\arrow{<}}},
    postaction={decorate}]  
(0,0) -- (10,0);

\draw[very thick,decoration={
    markings,
    mark=at position 0.666  with {\arrow{>}}},
    postaction={decorate}]   
(0,10) -- (10,10);

\draw[very thick,decoration={
    markings,
    mark=at position 0.2 with {\arrow{<<}}},
    postaction={decorate}]  
(10,0) -- (10,10);

\draw[very thick,decoration={
    markings,
    mark=at position 0.8  with {\arrow{>>}}},
    postaction={decorate}]   
(0,0) -- (0,10);
\end{scope}

\draw[black,fontscale=1] (23,6.25) circle (0pt) node[anchor=west] {$\mu= \left( 4,4\right)$};
\draw[black,fontscale=1]
 (23,4.5) circle (0pt) node[anchor=west] {$\Lambda_\mathcal{F} \left( M\right)= \left( 4,2,2\right)$};
\draw[black,fontscale=1]
 (23,2.75) circle (0pt) node[anchor=west] {$\Lambda_\mathcal{W} \left( M\right)= \left( 4,4 \right)$};
\draw[black,fontscale=1]
 (23,1) circle (0pt) node[anchor=west] {$\Lambda_\mathcal{B} \left( M\right)= \left( 2,2,2,2\right)$};

\begin{scope}
\clip (-1,-2.5) rectangle (25,0);
\draw[black,fontscale=2] (6,-1.5) circle (0pt) node[anchor=west] {$\mu_1 =4$};
\draw[black,fontscale=2] (18,-1.5) circle (0pt) node[anchor=west] {$\mu_2 =4$};

\tikzset{my circle/.pic={\node [draw, thick, circle, minimum width=17pt,fontscale=2] {\tikzpictext};},}
  
\pic [pic text=$2$] at (13,-1.5) {my circle};  
\pic [pic text=$1$] at (1,-1.5) {my circle};  
\end{scope} 

\end{tikzpicture}
\caption{Example of a \emph{rooted, bipartite} $\mu$-\emph{list} of maps for a partition $\mu =(4,4)$. 
The first map is drawn on a torus, the second one on a projective plane. 
We present also the face, white vertices and black vertices distributions. 
By erasing the roots and the numbering of the connected components we obtain a \emph{bipartite} $\mu$-\emph{collection} of maps.}
\label{New3a}
\end{figure}
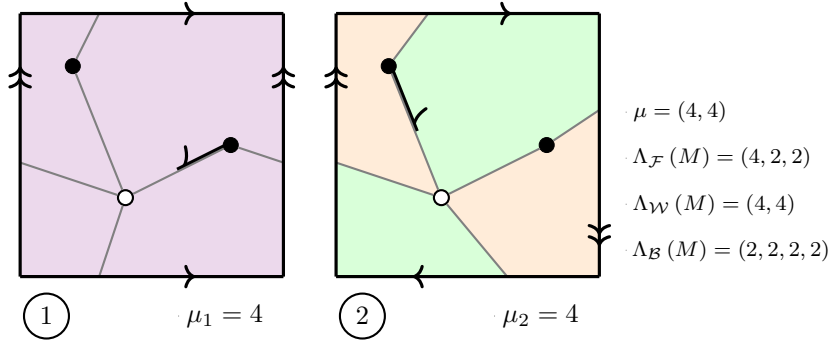

Due to the nature of our result we extend this definition slightly, namely we waive the assumption of connectedness in the definition of a map. 
There are two natural ways to generalize the notion of connected maps to non-connected ones: either we consider \emph{lists} of connected maps or we consider \emph{collections} of them. 

\subsection{Lists and collections of maps.}

\begin{de}
\label{maps1}
Let $\mu =(\mu_1,\ldots ,\mu_k )$ be a partition of an integer $n$. 
A list of maps $ (M_1,\ldots ,M_k )$ is called a $\mu$-\emph{list of maps} if the map $M_i$ has $\mu_i$ edges for each $i\in [k]$. 
We say that such a list is \emph{rooted}, respectively \emph{bipartite} if each map $M_i$ is so. 
For a bipartite $\mu$-list of maps we associate three partitions describing the black vertex, the white vertex and the face structures
\begin{equation*}
\Lambda_\mathcal{W} ( M ) := \bigcup_{i=1}^{k} \Lambda_\mathcal{W} ( M_i ) ,\quad
\Lambda_\mathcal{B} ( M ) := \bigcup_{i=1}^{k} \Lambda_\mathcal{B} ( M_i )   ,  \quad
\Lambda_\mathcal{F} ( M ) := \bigcup_{i=1}^{k} \Lambda_\mathcal{F} ( M_i ) ,
\end{equation*}
\noindent
where $\bigcup$ denotes the concatenation of partitions.
\end{de}

\begin{de}[Extension of \cref{Maps}]
\label{extension1}
For given partitions $\pi,\sigma, \mu \vdash n$, we denote by $\Mp$ the set of all bipartite rooted $\mu$-lists of maps $M$ which satisfy
\begin{equation*}
 \Lambda_\mathcal{W} (M ) = \pi  \quad\quad\text{and}\quad\quad
\Lambda_\mathcal{B} (M ) = \sigma ,
\end{equation*}
\noindent
see \cref{New3a}. Moreover, for a given partition $\lambda\vdash n$ we denote by $ \Mlp$ the set of all $\mu$-lists of maps $M \in \Mp$ which have face-type $\lambda$, \ie $\Lambda_\mathcal{F} (M)=\lambda$. 
\end{de}

\begin{de}
\label{maps3}
Let $\mu =(\mu_1,\ldots ,\mu_k )$ be a partition of an integer $n$. 
A set of maps $\{M_1,\ldots ,M_k\}$ is called a $\mu$-\emph{collection of maps} if the map $M_i$ has $\mu_i$ edges for each $i\in [k]$. 
We say that such a collection is \emph{rooted} or \emph{bipartite} if each map $M_i$ is so. For such a collection of maps we associate three partitions describing black, white and face structures as in \cref{maps1}.
\end{de}

Roughly speaking, a $\mu$-collection of maps could be created from a $\mu$-list of maps by erasing the numbering of the connected components, \ie the order on the connected components (see \cref{New3a}).

\subsection{Matchings and maps.}
\label{42}

Matching and maps are closely related notions. 
Roughly speaking, a bipartite matching can be treated as a (possibly non-connected) bipartite map with rooted and numbered faces. 
We shall discuss relations between matchings and rooted list of maps with the same face, black vertices and white vertices distribution.

\begin{de}
Consider two partitions $\lambda,\mu \vdash n$. We say that a bipartite $\mu$-collection $M$ of maps with the face distribution given by $\lambda$ has \emph{rooted and numbered faces} if all faces of $M$ are rooted (\ie on each face there is one marked edge-side) and the face labelled by the number $i$ is surrounded by $2 \lambda_i$ edges, for each $i$, see \cref{New3b}. The set of such collections of maps with the face, black vertices and white vertices distributions given by the partitions $\lambda,\pi,\sigma \vdash n$ is denoted by $\GMlp$.
\end{de}

\begin{rem}
\label{rooting}
Observe that rooting a face is nothing else but choosing one of the face corners adjacent to some black vertex and \emph{orienting} the face. 
Through a map (or a list/a collection of maps) with rooted faces we can understand a map with oriented faces and chosen black corners for each of the faces, see \cref{New3b}. 
Similarly, rooting a map is choosing one corner of a black vertex and orienting the face adjacent to this corner. 
\end{rem}

\begin{figure}
\centering
\begin{tikzpicture}[scale=0.4,
white/.style={circle,thick,draw=black,fill=white,inner sep=2pt},
white2/.style={circle,thick,draw=white,fill=white,inner sep=2pt},
black/.style={circle,draw=black,fill=black,inner sep=2pt},
connection1/.style={draw=brown,thick,auto,brown,fontscale=2},
connection2/.style={draw=blue,thick,auto,blue,fontscale=2},
connection/.style={draw=gray,thick,gray,auto}
]
\scriptsize

\begin{scope}
\clip (0,0) rectangle (10,10);

\fill[violet!15] (0,0) rectangle (10,10);

\draw[very thick,decoration={
    markings,
    mark=at position 0.9  with {\arrow{>}}},
    postaction={decorate},double=black]   
(2,8) -- (2.7,9.4);

\coordinate (L)       at (-2,5);
\coordinate (R)       at (11,4);
\coordinate (B)       at (3,0);
\coordinate (T)       at (3,10);

\fill[violet!15] (0,10) -- (L) -- (4,3) -- (2,8) -- (T) -- (0,10);

\draw (2,8)  node (b2)    [black] {};
\draw (8,5)  node (b1)     [black] {};
\draw (4,3)  node (w1)     [white] {};

\draw[connection]         (w1)       to  (b1);

\draw[connection]         (b1)       to  (R);

\draw[connection]         (w1)       to  (L);

\draw[connection]         (w1)       to  (b2);

\draw[connection]         (b2)       to  (T);

\draw[connection]         (w1)       to  (B);

\end{scope}

\draw[very thick,decoration={
    markings,
    mark=at position 0.666  with {\arrow{>}}},
    postaction={decorate}]  
(-0.1,-0.1) -- (10.1,-0.1);

\draw[very thick,decoration={
    markings,
    mark=at position 0.666  with {\arrow{>}}},
    postaction={decorate}]   
(-0.1,10.1) -- (10.1,10.1);

\draw[very thick,decoration={
    markings,
    mark=at position 0.8 with {\arrow{>>}}},
    postaction={decorate}]  
(10.1,-0.1) -- (10.1,10.1);

\draw[very thick,decoration={
    markings,
    mark=at position 0.8  with {\arrow{>>}}},
    postaction={decorate}]   
(-0.1,-0.1) -- (-0.1,10.1);

\draw[-{>[flex=0.75]},>=stealth,gray,thick] (7,7) arc (290:00:0.7cm);
\draw [->,>=stealth,red,thick] (3.2,7.1) -- +(b2);

\begin{scope}[shift={(15,0)}]
\clip (0,0) rectangle (10,10);

\coordinate (L)       at (-2,5);
\coordinate (R)       at (11,7);
\coordinate (B)       at (6.5,0);
\coordinate (T)       at (3.5,10);

\fill[green!15] (0,0) -- (L) -- (4,3) -- (B) -- (0,0);
\fill[green!15] (10,10) -- (R) -- (8,5) -- (10,10);
\fill[orange!15] (10,0) -- (8,5) -- (4,3) -- (B) -- (10,0);

\draw[very thick,decoration={
    markings,
    mark=at position 0.8  with {\arrow{>}}},
    postaction={decorate},double=black]   
(8,5) -- (6,4);

\draw[very thick,decoration={
    markings,
    mark=at position 0.99  with {\arrow{>}}},
    postaction={decorate},double=black]   
(8,5) -- (9.5,6);
\fill[green!15] (10,10) -- (8,5)-- (4,3) --(2,8) -- (T) -- (10,10);
\fill[orange!15] (10,0) -- (R) -- (8,5)-- (10,0);
\fill[orange!15] (0,10) -- (L) -- (4,3) -- (2,8) -- (T) -- (0,10);

\draw (2,8)  node (b2)    [black] {};
\draw (8,5)  node (b1)     [black] {};
\draw (4,3)  node (w1)     [white] {};

\draw[connection]         (w1)       to  (b1);

\draw[connection]         (b1)       to  (R);

\draw[connection]         (w1)       to  (L);

\draw[connection]         (w1)       to  (b2);

\draw[connection]         (b2)       to  (T);

\draw[connection]         (w1)       to  (B);

\end{scope}

\begin{scope}[shift={(15,0)}]
\draw[very thick,decoration={
    markings,
    mark=at position 0.333 with {\arrow{<}}},
    postaction={decorate}]  
(-0.1,-0.1) -- (10.1,-0.1);

\draw[very thick,decoration={
    markings,
    mark=at position 0.666  with {\arrow{>}}},
    postaction={decorate}]   
(-0.1,10.1) -- (10.1,10.1);

\draw[very thick,decoration={
    markings,
    mark=at position 0.2 with {\arrow{<<}}},
    postaction={decorate}]  
(10.1,-0.1) -- (10.1,10.1);

\draw[very thick,decoration={
    markings,
    mark=at position 0.8  with {\arrow{>>}}},
    postaction={decorate}]   
(-0.1,-0.1) -- (-0.1,10.1);

\draw [->,>=stealth,red,thick] (7.5,6) -- +(b1);
\draw [->,>=stealth,red,thick] (8.2,4) -- +(b1);
\draw[-{>[flex=0.75]},>=stealth,gray,thick] (6,6.7) arc (20:310:0.7cm);
\draw[-{>[flex=0.75]},>=stealth,gray,thick] (8,2.7) arc (00:290:0.7cm);

\end{scope}

\end{tikzpicture}
\caption{Example of a bipartite $(4,4)$-collection of maps with rooted faces. By \emph{rooting faces} we understand choosing one edge-side of each face (drawn as a black half-arrow going from a black vertex) or, equivalently, orienting each face (the rounded arrows) and choosing one black vertex for each face (the red arrows).}
\label{New3b}
\end{figure}
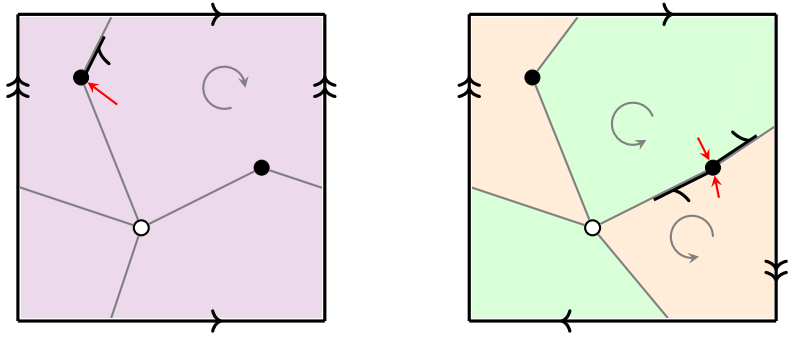

We consider four partitions: $\pi,\sigma, \lambda,\mu \vdash n$. 
To a given matching $\delta \in \Glp$ we associate a bipartite $\mu$-collection $M_\delta \in \GMlp$ given by the following procedure. 
\begin{enumerate}

\item 
The matchings $\epsilon$ and $\delta_\lambda$ determine the polygons with the \emph{vertices} labelled by $\mathcal{N}_n$, see \cref{matching}. 
We take theirs \emph{duals}, \ie the polygons with the \emph{edges} labelled by $\mathcal{N}_n$, see \cref{New1b}. 
The consecutive polygons have $2 \lambda_1 ,2 \lambda_2 ,\ldots$ edges. Observe that the parts of $\epsilon$ (respectively $\delta_\lambda$) can be identified with the black (respectively white) vertices as it is shown on \cref{New1b};

\item 
The matching $\delta$ determines the unique way of gluing the edges of the polygons in such a way that black (white) vertices are glued with black (white) ones. 
\cref{New2b} presents such a gluing for the matching 
\begin{equation*}
\delta= \big\{ \{\hat{1},\hat{6} \}, \{ 1, 5\}, \{\hat{2},\hat{8} \}, \{ 2, 7\} , 
\{\hat{3},\hat{7} \}, \{ 3, 8\}, \{\hat{4},6 \}, \{ 4, \hat{5}\} \big\} .
\end{equation*}
Observe that the distribution of black (respectively white) vertices is given by $\Lambda (\delta, \epsilon ) $ (respectively by $\Lambda (\delta, \delta_\lambda )$). Moreover, $\mu = \Lambda (\delta,\epsilon,\delta_\lambda )$.

\item
Each face is canonically numbered by an integer $s$ related to the polygon $\lambda_s$, \ie the edge-sides of this face are labelled by the elements
\begin{equation*}
 \sum_{i=1}^{s-1} \lambda_i +1 ,\ldots , \sum_{i=1}^{s-1} \lambda_i +\lambda_s , 
 \widehat{\sum_{i=1}^{s-1} \lambda_i +1}, \ldots \widehat{\sum_{i=1}^{s-1} \lambda_i +\lambda_s } .
\end{equation*}
Such a face is canonically rooted by selecting the edge-side labelled by the number $\sum_{i=1}^{s-1} \lambda_i +1$, see \cref{New2b}.

\item
We remove the labelling by the elements from $\mathcal{N}_n$. 
\end{enumerate}

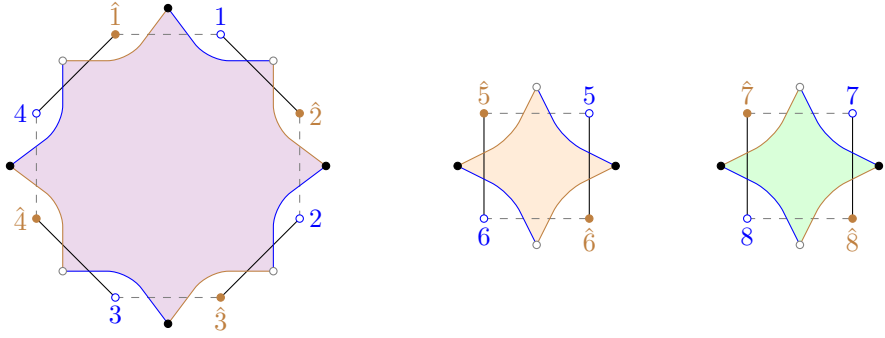
\begin{figure}
\centering
\begin{tikzpicture}[scale=0.35]
\draw[name path = 2I,violet!15] (5,11) -- (5,5) --(9,9) -- (5,5) -- (11,5) -- (5,5) -- (9,1) -- (5,5) -- (5,-1) -- (5,5) --(1,1) -- (5,5) -- (-1,5) -- (5,5) -- (1,9) -- (5,5) -- (5,11);

\draw[gray,dashed] (3,10)  -- (7,10) node [midway, below, sloped] (TextNode) {};
\draw[gray,dashed] (3,0)  -- (7,0) node [midway, below, sloped] (TextNode) {};
\draw[gray,dashed] (0,3)  -- (0,7) node [midway, below, sloped] (TextNode) {};
\draw[gray,dashed] (10,3)  -- (10,7) node [midway, below, sloped] (TextNode) {};

\draw[black] (3,10)  -- (0,7) node [midway, below, sloped] (TextNode) {};
\draw[black] (10,3)  -- (7,0) node [midway, below, sloped] (TextNode) {};
\draw[black] (0,3)  -- (3,0) node [midway, below, sloped] (TextNode) {};
\draw[black] (10,7)  -- (7,10) node [midway, below, sloped] (TextNode) {};

\filldraw[brown] (3,10) circle (4pt) node[anchor=south] {$\hat{1}$};
\filldraw[brown] (10,7) circle (4pt) node[anchor=west] {$\hat{2}$};
\filldraw[brown] (7,0) circle (4pt) node[anchor=north] {$\hat{3}$};
\filldraw[brown] (0,3) circle (4pt) node[anchor=east] {$\hat{4}$};

\fill[white] (7,10) circle (4pt) node[anchor=east] {};
\draw[blue] (7,10) circle (4pt) node[anchor=south] {$1$};
\fill[white] (10,3) circle (4pt) node[anchor=east] {};
\draw[blue] (10,3) circle (4pt) node[anchor=west] {$2$};

\fill[white] (3,0) circle (4pt) node[anchor=east] {};
\draw[blue] (3,0) circle (4pt) node[anchor=north] {$3$};
\fill[white] (0,7) circle (4pt) node[anchor=east] {};
\draw[blue] (0,7) circle (4pt) node[anchor=east] {$4$};

\draw[name path = 2A,blue,rounded corners=8pt]  (9,9)-- (6.5,9) -- (5,11);
\draw[name path = 2B,brown,rounded corners=8pt] (11,5) -- (9,6.5) -- (9,9);
\draw[name path = 2C,blue,rounded corners=8pt] (9,1)-- (9,3.5) -- (11,5);
\draw[name path = 2D,brown,rounded corners=8pt] (5,-1)-- (6.5,1) -- (9,1);
\draw[name path = 2E,brown,rounded corners=8pt] (1,9)-- (3.5,9) -- (5,11);
\draw[name path = 2F,blue,rounded corners=8pt] (1,9) -- (1,6.5) -- (-1,5);
\draw[name path = 2G,brown,rounded corners=8pt] (-1,5)-- (1,3.5) --(1,1) ;
\draw[name path = 2H,blue,rounded corners=8pt] (1,1)-- (3.5,1) -- (5,-1);

\filldraw[black] (-1,5) circle (4pt);
\filldraw[black] (11,5) circle (4pt);
\filldraw[black] (5,11) circle (4pt);
\filldraw[black] (5,-1) circle (4pt);

\fill[white] (1,9) circle (4pt);
\draw[gray] (1,9) circle (4pt);
\fill[white] (1,1) circle (4pt);
\draw[gray] (1,1) circle (4pt);
\fill[white] (9,9) circle (4pt);
\draw[gray] (9,9) circle (4pt);
\fill[white] (9,1) circle (4pt);
\draw[gray] (9,1) circle (4pt);

\begin{pgfonlayer}{bg}
\fill [violet!15,intersection segments={of=2I and 2A,sequence={L2--R2}}];
\fill [violet!15,intersection segments={of=2I and 2B,sequence={L2--R2}}];          
\fill [violet!15,intersection segments={of=2I and 2C,sequence={L2--R2}}];
\fill [violet!15,intersection segments={of=2D and 2I,sequence={L2--R2}}];  
\fill [violet!15,intersection segments={of=2E and 2I,sequence={L2--R2}}];
\fill [violet!15,intersection segments={of=2F and 2I,sequence={L2--R2}}];          
\fill [violet!15,intersection segments={of=2G and 2I,sequence={L2--R2}}];
\fill [violet!15,intersection segments={of=2H and 2I,sequence={L2--R2}}];  
\end{pgfonlayer}

\draw[name path = 2E,orange!15] (22,5) -- (19,5) --(19,8) -- (19,5) -- (19,2) -- (19,5) -- (16,5) -- (19,5) -- (22,5);

\draw[gray,dashed] (17,7)  -- (21,7) node [midway, below, sloped] (TextNode) {};
\draw[gray,dashed] (17,3)  -- (21,3) node [midway, below, sloped] (TextNode) {};

\draw[black] (17,3)  -- (17,7) node [midway, below, sloped] (TextNode) {};
\draw[black] (21,3)  -- (21,7) node [midway, below, sloped] (TextNode) {};

\filldraw[brown] (17,7) circle (4pt) node[anchor=south] {$\hat{5}$};
\filldraw[brown] (21,3) circle (4pt) node[anchor=north] {$\hat{6}$};

\fill[white] (21,7) circle (4pt) node[anchor=east] {};
\draw[blue] (21,7) circle (4pt) node[anchor=south] {$5$};
\fill[white] (17,3) circle (4pt) node[anchor=east] {};
\draw[blue] (17,3) circle (4pt) node[anchor=north] {$6$};

\draw[name path = 2A,blue,rounded corners=8pt] (19,8)-- (20,6) -- (22,5);
\draw[name path = 2B,blue,rounded corners=8pt] (16,5)-- (18,4) -- (19,2);

\draw[name path = 2C,brown,rounded corners=8pt] (19,2)-- (20,4) -- (22,5);
\draw[name path = 2D,brown,rounded corners=8pt] (16,5) -- (18,6) -- (19,8);

\filldraw[black] (22,5) circle (4pt);
\filldraw[black] (16,5) circle (4pt);

\fill[white] (19,8) circle (4pt);
\draw[gray] (19,8) circle (4pt);
\fill[white] (19,2) circle (4pt);
\draw[gray] (19,2) circle (4pt);

\begin{pgfonlayer}{bg}
\fill [orange!15,intersection segments={of=2A and 2E,sequence={L2--R2}}];
\fill [orange!15,intersection segments={of=2B and 2E,sequence={L2--R2}}];          
\fill [orange!15,intersection segments={of=2C and 2E,sequence={L2--R2}}];
\fill [orange!15,intersection segments={of=2E and 2D,sequence={L2--R2}}];  
\end{pgfonlayer}

\begin{scope}[shift={(10,0)}]
\draw[name path = 2E,green!15] (22,5) -- (19,5) --(19,8) -- (19,5) -- (19,2) -- (19,5) -- (16,5) -- (19,5) -- (22,5);
\end{scope}

\draw[gray,dashed] (27,7)  -- (31,7) node [midway, below, sloped] (TextNode) {};
\draw[gray,dashed] (27,3)  -- (31,3) node [midway, below, sloped] (TextNode) {};

\draw[black] (27,3)  -- (27,7) node [midway, below, sloped] (TextNode) {};
\draw[black] (31,3)  -- (31,7) node [midway, below, sloped] (TextNode) {};

\filldraw[brown] (27,7) circle (4pt) node[anchor=south] {$\hat{7}$};
\filldraw[brown] (31,3) circle (4pt) node[anchor=north] {$\hat{8}$};

\fill[white] (31,7) circle (4pt) node[anchor=east] {};
\draw[blue] (31,7) circle (4pt) node[anchor=south] {$7$};
\fill[white] (27,3) circle (4pt) node[anchor=east] {};
\draw[blue] (27,3) circle (4pt) node[anchor=north] {$8$};

\begin{scope}[shift={(10,0)}]
\draw[name path = 2A,blue,rounded corners=8pt] (19,8)-- (20,6) -- (22,5);
\draw[name path = 2B,blue,rounded corners=8pt] (16,5)-- (18,4) -- (19,2);

\draw[name path = 2C,brown,rounded corners=8pt] (19,2)-- (20,4) -- (22,5);
\draw[name path = 2D,brown,rounded corners=8pt] (16,5) -- (18,6) -- (19,8);

\filldraw[black] (22,5) circle (4pt);
\filldraw[black] (16,5) circle (4pt);

\fill[white] (19,8) circle (4pt);
\draw[gray] (19,8) circle (4pt);
\fill[white] (19,2) circle (4pt);
\draw[gray] (19,2) circle (4pt);

\begin{pgfonlayer}{bg}
\fill [green!15,intersection segments={of=2A and 2E,sequence={L2--R2}}];
\fill [green!15,intersection segments={of=2B and 2E,sequence={L2--R2}}];          
\fill [green!15,intersection segments={of=2C and 2E,sequence={L2--R2}}];
\fill [green!15,intersection segments={of=2E and 2D,sequence={L2--R2}}];  
\end{pgfonlayer}
\end{scope}

\end{tikzpicture}
\caption{Duals of the polygons created by the matchings $\epsilon$ and $\delta_\lambda$ presented on \cref{matching}. Black (respectively white) vertices of such polygons are labelled by the elements of $\epsilon$ (respectively $\delta_\lambda$), the edges by the elements from $\mathcal{N}_8$.} 
\label{New1b}
\end{figure}

\newcommand{\faceA}{red!50}
\newcommand{\faceB}{blue}
\newcommand{\faceAfill}{red!10}
\newcommand{\faceBfill}{blue!20}
\newcommand{\faceCfill}{black!50!green!10}

\begin{figure}
\centering
\begin{tikzpicture}[scale=0.6,
white/.style={circle,thick,draw=black,fill=white,inner sep=2pt},
white2/.style={circle,thick,draw=white,fill=white,inner sep=2pt},
black/.style={circle,draw=black,fill=black,inner sep=2pt},
connection1/.style={draw=brown,thick,auto,brown,fontscale=2},
connection2/.style={draw=blue,thick,auto,blue,fontscale=2},
connection/.style={draw=gray,thick,gray,auto}
]
\scriptsize

\fill[violet!15] (0,0) rectangle (10,10);
\fill[green!15] (8,7) -- (6,9) -- (4,7) --(4,3) ;

\draw[very thick,red,decoration={
    markings,
    mark=at position 0.99  with {\arrow{>}}},
    postaction={decorate},double=black] 
(8,7)  -- (7,8);

\fill[violet!15] (8,7) --(7,8) --(8,9) --(9,8);

\begin{scope}
\clip (0,0) rectangle (10,10);

\fill[orange!15] (9,-1) -- (4,3) -- (7,4) -- (11,5.7) --(10,0);
\fill[orange!15] (-2,5.7) -- (4,3) -- (1,7) -- (4.2,11) -- (0,10);

\draw[very thick,red,decoration={
    markings,
    mark=at position 0.99  with {\arrow{>}}},
    postaction={decorate},double=black]   
(7,4) -- (6,3.66);

\draw (1,7)  node (b1)     [black] {};
\draw (4,3)  node (w1)     [white] {};
\coordinate (w12)       at (6,5);
\coordinate (w11)       at (4,5);
\coordinate (w1prim)      at (11,5.7);
\coordinate (w1bis)       at (4.2,11);
\coordinate (b1bis)       at (9,-1);

\draw (7,4)  node (bb1)    [black] {};
\coordinate (bb1prim) at (-2,5.7);

\draw (8,7)  node (AA)     [black] {};
\draw (6,9)  node (BA)     [white] {};
\draw (4,7)  node (AB)     [black] {};

\draw[connection1]        (w12)      -- node[below] {$\hat{2}$} (AA);
\draw[connection2]        (7,8)       -- node[below] {$7$}  (BA);
\draw[connection1]        (BA)       -- node[below] {$\hat{7}$}  (AB);
\draw[connection2]        (AB)       -- node[right] {$8$}  (w11);
\draw[connection1]        (w12)      -- node[above] {$\hat{8}$}  (AA);
\draw[connection2]        (AA)       -- node[above] {$2$}   (BA);
\draw[connection1]        (BA)       -- node[above] {$\hat{3}$}   (AB);
\draw[connection2]        (AB)       -- node[left] {$3$}  (w11);

\draw[connection]         (w1)       to  (AA);
\draw[connection]         (AA)       to  (BA);
\draw[connection]         (BA)       to  (AB);
\draw[connection]         (AB)       to  (w1);

\draw[connection2]        (b1)       -- node[below] {$6$}      (w1);
\draw[connection1]        (b1)       -- node[above] {$\hat{4}$}   (w1);
\draw[connection]         (b1)       --    (w1);
\draw[connection2]        (b1)       -- node[below] {$4$}      (w1bis) ;
\draw[connection1]        (b1)       -- node[above] {$\hat{5}$}    (w1bis) ;
\draw[connection]         (b1)       to  (w1bis) ;
\draw[connection1]        (bb1)      -- node[below] {$\hat{6}$}   (w1prim);
\draw[connection1]        (bb1)      -- node[above] {$\hat{1}$}  (w1prim);
\draw[connection]         (bb1)      to  (w1prim);

\draw[connection2]        (w1)       -- node[below] {$5$}   (bb1);
\draw[connection2]        (w1)       -- node[above] {$1$}  (bb1);
\draw[connection]         (w1)       to  (bb1);
\draw[connection1]        (w1)       -- node[below] {$\hat{1}$}    (bb1prim);
\draw[connection1]        (w1)       -- node[above] {$\hat{6}$}  (bb1prim);
\draw[connection]         (w1)       to  (bb1prim);
\draw[connection2]        (w1)       -- node[below] {$4$}   (b1bis);
\draw[connection1]        (w1)       -- node[above] {$\hat{5}$}  (b1bis);
\draw[connection]         (w1)       to  (b1bis);

\end{scope}

\draw[very thick,decoration={
    markings,
    mark=at position 0.333  with {\arrow{<}}},
    postaction={decorate},double=black]  
(-0.1,-0.1) -- (10.1,-0.1);

\draw[very thick,decoration={
    markings,
    mark=at position 0.666  with {\arrow{>}}},
    postaction={decorate},double=black]   
(-0.1,10.1) -- (10.1,10.1);

\draw[very thick,decoration={
    markings,
    mark=at position 0.666  with {\arrow{>>}}},
    postaction={decorate},double=black]  
(10.1,-0.1) -- (10.1,10.1);

\draw[very thick,decoration={
    markings,
    mark=at position 0.333  with {\arrow{<<}}},
    postaction={decorate},double=black]   
(-0.1,-0.1) -- (-0.1,10.1);

\tikzset{my circle/.pic={\node [draw, thick, circle, minimum width=17pt,fontscale=2] {\tikzpictext};},}
\pic [pic text=$1$] at (2,1.5) {my circle};  
\pic [pic text=$2$] at (8,2) {my circle}; 
\pic [pic text=$3$] at (5.4,6.5) {my circle};

\begin{scope}
\clip (-5,-3) rectangle (15,0);
\draw[black,fontscale=2] (5,-2) circle (0pt) node  {$\delta= \big\{ \{ \color{brown}{\hat{1}}\color{black},\color{brown}\hat{6} \color{black}\}, \{ \color{blue}1\color{black}, \color{blue}5\color{black}\}, \{\color{brown}\hat{2}\color{black},\color{brown}\hat{8}\color{black} \}, \{ \color{blue}2\color{black},\color{blue}7\color{black}\} , \{\color{brown}\hat{3}\color{black},\color{brown}\hat{7} \color{black}\}, \{\color{blue} 3\color{black}, \color{blue}8\color{black}\}, \{\color{brown}\hat{4}\color{black},\color{blue}6 \color{black}\}, \{ \color{blue}4\color{black}, \color{brown}\hat{5}\color{black}\} \big\} $};
\end{scope}
\end{tikzpicture}
\caption{Matching $\delta$ on the set $\mathcal{N}_8$ describes the way of gluing the sides of the polygons from \cref{New1b}. 
Labels from $\mathcal{N}_8$ determine the way of numbering and rooting faces of such a map (in general it could be a collection of maps), the roots (presented as half-arrows) correspond to the labels $1,5,7$. Numbers of faces are presented in black circles.} 
\label{New2b}
\end{figure}

\begin{cor}
\label{coro}
The procedure described above gives a bijection $\delta \mapsto M_\delta$ between the set of matchings $ \Glp$ and the set of collections of maps $ \GMlp$. 
\end{cor}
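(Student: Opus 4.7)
The plan is to verify that the four-step procedure $\delta \mapsto M_\delta$ lands in $\GMlp$, to exhibit a canonical inverse, and to check mutual inversion. Well-definedness is largely built into the construction: the dual polygons produced in step~(1) have perimeters $2\lambda_1, 2\lambda_2, \ldots$, so after the gluing step~(2) the face type of $M_\delta$ is automatically $\lambda$, and step~(3) equips each face $s$ with a canonical root and the integer label $s$. The resulting surface is bipartite because every edge of a dual polygon joins one part of $\epsilon$ to one part of $\delta_\lambda$, and the gluing respects this two-colouring: an edge $x$ and its partner $y$ under $\delta$ each connect a $\delta_\lambda$-vertex to an $\epsilon$-vertex, so the identification pairs same-coloured endpoints.

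The key combinatorial step is showing that the two vertex-type partitions and the connected-component partition of $M_\delta$ coincide respectively with $\Lambda(\delta,\epsilon)$, $\Lambda(\delta,\delta_\lambda)$ and $\Lambda(\delta,\epsilon,\delta_\lambda)$. For this I would argue that the equivalence classes of polygon vertices of one colour class under the gluing induced by $\delta$ are exactly the cycles of the two-factor graph $G(\delta,\epsilon)$ (respectively $G(\delta,\delta_\lambda)$): contracting each $\epsilon$-edge of $G(\delta,\epsilon)$ produces a graph whose cycles coincide with the identification classes of the polygon vertices, and a cycle of length $2k$ collects $k$ polygon vertices into a single vertex of $M_\delta$ of degree $k$. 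The analogous argument applied to the three-factor graph $G(\delta,\epsilon,\delta_\lambda)$ identifies its connected components with those of the glued surface, whence the $\mu$-collection structure.

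For the inverse, given $M \in \GMlp$ I would reintroduce the $\mathcal{N}_n$-labels erased in step~(4) by reading off the numbering, orientation, and root of each face: for face number $s$, start at the root edge-side and walk around the face boundary in the induced orientation, labelling successive edge-sides by $\sum_{i<s}\lambda_i + 1,\ \widehat{\sum_{i<s}\lambda_i + 1},\ \sum_{i<s}\lambda_i + 2,\ \widehat{\sum_{i<s}\lambda_i + 2},\ldots$ alternately. The matching $\delta$ on $\mathcal{N}_n$ is then defined by pairing the two edge-sides of every edge of $M$. The same cycle analysis shows that $\delta \in \Glp$, and the two procedures compose to the identity on both sides since the labels introduced by the inverse are precisely those erased in step~(4) of the forward construction.

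The hardest part is organising the rooting and bipartite conventions coherently, as highlighted in \cref{rooting}: one must verify that walking around each polygon from its canonical root in the forward construction produces exactly the sequence of labels expected by the inverse map, and that the bipartite structure, together with the convention that every root is at a black corner, uniquely determines for each edge of $M$ which of its two edge-sides receives the hatted label and which the unhatted one.
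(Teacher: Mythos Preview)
Your proposal is correct; the paper itself offers no proof of this corollary at all, treating it as immediate from the four-step construction and simply following it with the comparison table. What you have written is a careful unpacking of exactly the verifications the paper leaves implicit: that the gluing respects the bipartite structure, that the cycle lengths of $G(\delta,\epsilon)$ and $G(\delta,\delta_\lambda)$ give the vertex degrees, that components of $G(\delta,\epsilon,\delta_\lambda)$ give the $\mu$-structure, and that the face-rooting and face-numbering data suffice to reconstruct the $\mathcal{N}_n$-labels and hence $\delta$. Your identification of the bookkeeping around the hatted versus unhatted labels and the black-corner rooting convention as the most delicate point is apt, and matches what the paper gestures at in \cref{rooting} and the figures without spelling out.
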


We compare the terminologies of matchings and maps in the table below. 

\begin{center}
  \label{Table}
 \setlength{\tabcolsep}{0.5em} 
{\renewcommand{\arraystretch}{1.4}
  \begin{tabular}{| l | c | c |}
    \hline
                  & Matching $\delta$ & $\mu$-list of maps $M$\\ \hline
    face-type & $ \lambda $ & $\Lambda_\mathcal{F} (M)$ \\ 
    distribution of black vertices & $\Lambda(\delta ,\epsilon) $ & $\Lambda_\mathcal{B} (M)$ \\ 
    distribution of white vertices & $\Lambda(\delta ,\delta_\lambda) $ & $\Lambda_\mathcal{W} (M)$ \\
    connected components & $\Lambda(\delta_\lambda ,\epsilon, \delta) $ & $\mu$ \\
    \makecell[l]{$\mu$-collections of maps with given faces, \\black and white vertices distribution \\ and with rooted and numbered faces}& \multirow{ 1}{*}{$\Glp$} &  \multirow{ 1}{*}{$\GMlp$} \\   
    \hline
  \end{tabular}}
\end{center} 

\subsection{Matchings and lists of rooted maps.}
We showed that matchings are equivalent to collections of maps with rooted and numbered \emph{faces}. 
However, collections of maps with rooted and numbered \emph{connected components} (\ie lists of rooted maps) are much more natural objects. 
We give a relation between those two ways of numbering and rooting collections of maps. More precisely, we present a relation between the set $\GMlp$ and the set $\Mlp$. 

What is common for those two classes is the fact that by rooting and numbering faces or connected components, the group of automorphisms becomes trivial.

\begin{de}
\label{rootGM}
For a given $\mu$-collection of maps with rooted and numbered faces $M \in \GMlp$ we define the set $\mathcal{R} ( M)$ of all numberings of the connected components and rooting each of them in such a way that with respect to them $M$ becomes a $\mu$-\emph{list} of maps from $ \Mlp$. 
We call  $\mathcal{R} ( M)$ the \emph{set of components-labellings} of $M$. 
For a given $r \in \mathcal{R} ( M)$ we denote $(M,r ) \in   \Mlp$. 

Similarly, for a given $\mu$-list of maps $M \in \Mlp$ we define the set $\mathcal{L} ( M)$ of all numberings of the faces and rooting each of them in such a way that $M $ becomes an element from $\GMlp$. 
We call  $\mathcal{L} ( M)$ the \emph{set of faces-labellings} of $M$. 
For a given $l \in \mathcal{L} ( M)$ we denote $(M,l ) \in   \GMlp$. 
\end{de}

\begin{ob}
\label{RL}
Let us fix partitions $\pi,\sigma, \mu , \lambda \vdash n$. For each $M_1 \in \GMlp$ and $M_2 \in \Mlp$ we have
\begin{equation*}
\Big\vert \mathcal{R} ( M_1)\Big\vert = 2^{\ell (\mu )} z_\mu  
\quad\text{and}\quad
\Big\vert \mathcal{L} ( M_1) \Big\vert= 2^{\ell (\lambda )} z_\lambda.
\end{equation*}
\end{ob}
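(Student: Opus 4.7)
The plan is to compute each cardinality by separating the choice of labeling (of components, respectively faces) from the independent choice of rootings, relying on the fact that the rooting--plus--numbering structure of $\GMlp$ and $\Mlp$ forces trivial automorphism groups and hence no overcounting occurs. Note that the second equation should read $|\mathcal{L}(M_2)|$ by the types of the operators; it is evident from the definitions that both quantities depend only on $\mu$ (respectively $\lambda$), not on the chosen $M_1$ or $M_2$.

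To count $|\mathcal{R}(M_1)|$ for $M_1 \in \GMlp$, I would describe a components-labeling as the combination of (a) a size-preserving bijection between $\{1,\ldots,\ell(\mu)\}$ and the $\ell(\mu)$ connected components of $M_1$, and (b) a choice of root for each component in the sense of Section \ref{sec:labeled-maps}, \emph{i.e.}\ an edge-side together with one of its endpoints, required to be black. Step (a) contributes $\prod_i m_i(\mu)!$ options, since within each size class one freely permutes the $m_i(\mu)$ components of that size and no further collapse occurs (each component carries rooted, numbered faces, so it has trivial automorphism group). Step (b) contributes $2\mu_j$ per component, because each of the $\mu_j$ edges has two sides and exactly one black endpoint. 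Multiplying and invoking $\prod_j \mu_j = \prod_i i^{m_i(\mu)}$ gives
\[
|\mathcal{R}(M_1)| = \prod_i m_i(\mu)! \cdot \prod_j 2\mu_j = 2^{\ell(\mu)}\,\prod_i i^{m_i(\mu)} m_i(\mu)! = 2^{\ell(\mu)} z_\mu.
\]

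For $|\mathcal{L}(M_2)|$ with $M_2 \in \Mlp$, the argument is strictly parallel, with faces in place of components and $\lambda$ in place of $\mu$. The number of size-preserving labelings of the faces is $\prod_i m_i(\lambda)!$. By Remark \ref{rooting}, a face-rooting is a choice of a black corner of the face together with an orientation of the face; a face with $2\lambda_j$ bordering edges has $2\lambda_j$ corners alternating in colour along the boundary, hence $\lambda_j$ black corners, for a total of $2\lambda_j$ face-rootings. This yields $|\mathcal{L}(M_2)| = \prod_i m_i(\lambda)!\cdot \prod_j 2\lambda_j = 2^{\ell(\lambda)} z_\lambda$.

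There is no substantial obstacle; the only subtlety is verifying that distinct (labeling, rooting) pairs produce distinct elements of $\Mlp$ (respectively $\GMlp$), which reduces to the now-standard fact that rooting and numbering of faces (respectively of connected components) eliminates all non-trivial automorphisms of a bipartite map. The rest is a direct product count using the identity $z_\nu = \prod_i i^{m_i(\nu)} m_i(\nu)!$ applied to $\nu = \mu$ and $\nu = \lambda$.
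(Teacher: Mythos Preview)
Your proof is correct and follows essentially the same approach as the paper: separate the count into a factor $\prod_i m_i(\mu)!$ for numbering the components and a factor $\prod_j 2\mu_j = \prod_i (2i)^{m_i(\mu)}$ for rooting them (and analogously for faces with $\lambda$ in place of $\mu$). Your version is slightly more explicit about why each component admits exactly $2\mu_j$ roots and why triviality of the automorphism group prevents overcounting, but the argument is the same.
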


\begin{proof}
Let us take $M \in \GMlp$. 
There is $\prod_i {m_i(\mu)} !$ ways of numbering the connected components and $ \prod_i \big( 2 i \big)^{ m_i(\mu)} $ ways of rooting each of them. 
We may carry out a similar deduction for $M \in \Mlp$.
\end{proof}

\begin{ob}
\label{strange}
For given partitions $\pi,\sigma, \mu , \lambda \vdash n$ we have
\begin{equation*}
\Big\vert \Glp \Big\vert =
\Big\vert \GMlp \Big\vert= 
\dfrac{z_\lambda 2^{\ell (\lambda)}}{z_\mu 2^{\ell (\mu)}}
\Big\vert  \Mlp \Big\vert .
\end{equation*} 
\end{ob}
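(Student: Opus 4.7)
The plan is to combine \cref{coro} with a simple double-counting argument based on \cref{RL}.

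First, the equality $|\Glp| = |\GMlp|$ is immediate from \cref{coro}, which provides an explicit bijection $\delta \mapsto M_\delta$ between those two sets. So the whole content of the statement is the second equality
\[
\Big\vert \GMlp \Big\vert= \dfrac{z_\lambda 2^{\ell (\lambda)}}{z_\mu 2^{\ell (\mu)}} \Big\vert  \Mlp \Big\vert.
\]

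To prove it, I would count in two ways the set $\mathcal{S}$ of ``super-rooted'' objects: bipartite collections of $\mu$ maps with face-type $\lambda$, white vertex type $\pi$, black vertex type $\sigma$, and \emph{both} a numbering and rooting of the connected components \emph{and} a numbering and rooting of the faces. An element of $\mathcal{S}$ is the same data as a pair $(M, r)$ with $M \in \GMlp$ and $r \in \mathcal{R}(M)$; it is also the same data as a pair $(M, l)$ with $M \in \Mlp$ and $l \in \mathcal{L}(M)$. Hence
\[
|\mathcal{S}| = \sum_{M \in \GMlp} |\mathcal{R}(M)| = \sum_{M \in \Mlp} |\mathcal{L}(M)|.
\]

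Applying \cref{RL}, the first sum equals $|\GMlp| \cdot 2^{\ell(\mu)} z_\mu$ and the second equals $|\Mlp| \cdot 2^{\ell(\lambda)} z_\lambda$. Equating and dividing by $2^{\ell(\mu)} z_\mu$ yields the desired identity. There is no real obstacle here: once \cref{coro} and \cref{RL} are in hand, the statement is a one-line counting identity; the only thing to be slightly careful about is that the set $\mathcal{S}$ is described symmetrically in terms of rooting/numbering faces versus connected components, so that the two expressions for $|\mathcal{S}|$ genuinely refer to the same underlying set.
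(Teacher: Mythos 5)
Your proposal is correct and follows essentially the same route as the paper: the first equality via \cref{coro}, and the second by double-counting collections with both faces and components rooted and numbered, using the counts from \cref{RL}. Your write-up is in fact slightly more explicit than the paper's about identifying the doubly-decorated set with pairs $(M,r)$ and $(M,l)$, which is a welcome clarification but not a different argument.
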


\begin{proof}
The first equation follows from \cref{coro}. We investigate the second one. 
Each collection of maps from $\GMlp$ has rooted and numbered faces, each collection of maps from $\Mlp$ has rooted and numbered components. 
From each of them we can get a collection of maps which have rooted and numbered both: faces and components. The number of ways of doing it is given in \cref{RL}. We use the double counting method and conclude the second equation. 
\end{proof}

\subsection{Orientable maps and bipartite matchings.}

By an \emph{orientable map} we understand a map which is drawn on an orientable surface. 
An \emph{orientation} of a map is given by orienting each face 
in such a way, that the two edge-sides forming the same edge are oriented in the opposite way. 
We say that such an orientation of faces is \emph{coherent}. 
Orienting any face is equivalent to orienting a map. 
Observe that a rooted map possesses the canonical orientation given by the root, see \cref{rooting}. 
By a \emph{rooted orientable} map we understand an orientable map together with the \emph{orientation} given by the root, see \cref{Orientedmap}.

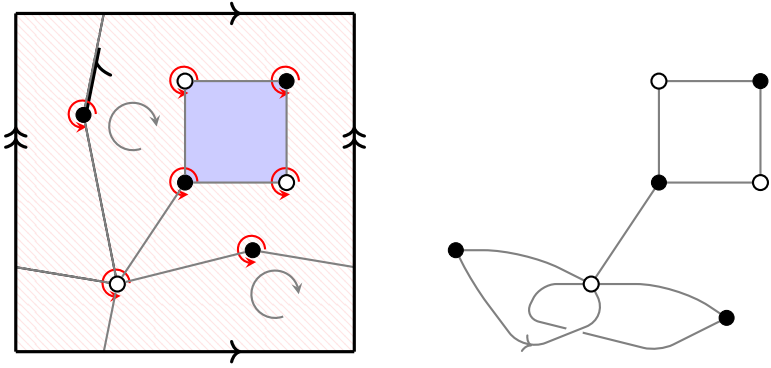
\begin{figure}
\centering
\begin{tikzpicture}[scale=0.45,
white/.style={circle,thick,draw=black,fill=white,inner sep=2pt},
white2/.style={circle,thick,draw=white,fill=white,inner sep=2pt},
black/.style={circle,draw=black,fill=black,inner sep=2pt},
connection/.style={draw=gray,thick,gray,auto}
]
\scriptsize

\draw[gray, thick]  (19,5) --  (22,5) ;
\draw[gray, thick]  (19,5) --  (19,8) ;
\draw[gray, thick]  (19,8) --  (22,8) ;
\draw[gray, thick]  (22,5) --  (22,8) ;
\draw[gray, thick] (17,2) -- (19,5);

\draw[gray, thick,rounded corners=15pt] (17,2)  -- (15,3) -- (13,3) ;
\draw[gray, thick,rounded corners=15pt] (17,2)  -- (19.5,2) -- (21,1) ;
\draw[gray, thick,rounded corners=6pt] (17,2)  -- (15.5,2) -- (15,1)  -- (19,0) -- (21,1) ;
\draw (16.5,0.6)  node (w1)     [white2] {};
\draw[gray, thick,decoration={markings,mark=at position 0.5  with {\arrow{<}}},postaction={decorate},rounded corners=9pt] (17,2)  --  (17.5,1) --  (15,0) --  (13.5,2) -- (13,3) ;

\draw (17,2)  node (w1)     [white] {};

\draw (13,3)  node (b1)     [black] {};
\draw (21,1)  node (bb1)    [black] {};

\draw (19,5)  node (AA)     [black] {};
\draw (22,5)  node (BA)     [white] {};
\draw (19,8)  node (AB)     [white] {};
\draw (22,8)  node (BB)     [black] {};

\draw[white] (23,1) circle (0pt) node[anchor=south] {};

\begin{scope}
\clip (0,0) rectangle (10,10);

\fill[pattern color=\faceAfill,pattern=north west lines] (0,0) rectangle (10,10);
\draw[very thick,decoration={
    markings,
    mark=at position 0.9  with {\arrow{>}}},
    postaction={decorate},double=black]   
(2,7) -- (2.4,9);
\fill[white] (-3,3) --(3,2) -- (2,7) --(3,12) -- (0,10);
\fill[pattern color=\faceAfill,pattern=north west lines] (-3,3) --(3,2) -- (2,7) --(3,12) -- (0,10);
\fill[\faceBfill] (5,5) rectangle (8,8);

\draw[-{>[flex=0.75]},>=stealth,gray,thick] (7.9,1.04) arc (290:00:0.7cm);
\draw[-{>[flex=0.75]},>=stealth,gray,thick] (3.7,6) arc (290:00:0.7cm);
\draw[{<[flex=0.75]}-,>=stealth,red,thick] (2.1,6.65) arc (290:00:0.4cm);
\draw[{<[flex=0.75]}-,>=stealth,red,thick] (3.1,1.65) arc (290:00:0.4cm);
\draw[{<[flex=0.75]}-,>=stealth,red,thick] (5.1,4.65) arc (290:00:0.4cm);
\draw[{<[flex=0.75]}-,>=stealth,red,thick] (8.1,4.65) arc (290:00:0.4cm);
\draw[{<[flex=0.75]}-,>=stealth,red,thick] (8.1,7.65) arc (290:00:0.4cm);
\draw[{<[flex=0.75]}-,>=stealth,red,thick] (5.1,7.65) arc (290:00:0.4cm);
\draw[{<[flex=0.75]}-,>=stealth,red,thick] (7.1,2.65) arc (290:00:0.4cm);

\draw (2,7)  node (b1)     [black] {};
\draw (3,2)  node (w1)     [white] {};
\coordinate (w1prim)      at (13,2);
\coordinate (w1bis)       at (3,12);
\coordinate (b1bis)       at (2,-3);

\draw (7,3)  node (bb1)    [black] {};
\coordinate (bb1prim) at (-3,3);

\draw (5,5)  node (AA)     [black] {};
\draw (8,5)  node (BA)     [white] {};
\draw (5,8)  node (AB)     [white] {};
\draw (8,8)  node (BB)     [black] {};

\draw[connection]         (w1)      to  (AA);
\draw[connection]         (AA)      to  (AB);
\draw[connection]         (AB)      to  (BB);
\draw[connection]         (BB)      to  (BA);
\draw[connection]         (BA)      to  (AA);

\draw[connection]         (b1)       to  (w1);
\draw[connection]         (b1)       to  (w1bis);
\draw[connection]         (bb1)      to  (w1prim);

\draw[connection]  (w1)      to  (bb1);
\draw[connection]         (w1)      to  (bb1prim);
\draw[connection]         (w1)      to  (b1bis);

\end{scope}

\draw[very thick,decoration={
    markings,
    mark=at position 0.666  with {\arrow{>}}},
    postaction={decorate}]  
(0,0) -- (10,0);

\draw[very thick,decoration={
    markings,
    mark=at position 0.666  with {\arrow{>}}},
    postaction={decorate}]  
(0,10) -- (10,10);

\draw[very thick,decoration={
    markings,
    mark=at position 0.666  with {\arrow{>>}}},
    postaction={decorate}]  
(10,0) -- (10,10);

\draw[very thick,decoration={
    markings,
    mark=at position 0.666  with {\arrow{>>}}},
    postaction={decorate}]  
(0,0) -- (0,10);
\end{tikzpicture}
\caption{Example of a \emph{rooted oriented} map $M$ drawn as a graph on a torus (on the left). 
There is the canonical orientation (grey arrows) given by the root. 
We are going to present oriented maps in such a way that their orientation is consistent with the \emph{clockwise} orientation of the page (grey arrows) or, equivalently, the \emph{counter-clockwise} orientation around each vertex (red arrows). The distinction between chosen orientations of the page and the vertices may seem awkward. However, it is more convenient for the purpose of \cref{M}. 
With this convention we can present the root of a map (similarly roots of lists of maps) by an \emph{arrow} going out from a black vertex. 
Since $M$ is oriented, it can be recovered from a graphical representation on the plane as a graph with a fixed cyclic order of outgoing edges around each vertex together with a choice of the root (on the right).}
\label{Orientedmap}
\end{figure}

\begin{de}
\label{orient}
We use the following notation:
\begin{align*} 
\MOlp &:= \left\lbrace M\in \Mlp : M \text{ is orientable} \right\rbrace ,\\
\MOp &:= \left\lbrace M\in \Mp : M \text{ is orientable} \right\rbrace ,\\
\GOlp &:= \left\lbrace \delta\in \Glp : \delta \text{ is bipartite} \right\rbrace .
\end{align*}
\end{de}

The notion of bipartiteness of a matching is closely related to the notion of orientability. 

\begin{ob}
\label{strange2}
For given partitions $\pi,\sigma, \mu , \lambda \vdash n$, we have
\begin{equation*}
\Big\vert \GOlp \Big\vert =
\dfrac{z_\lambda}{z_\mu}
\Big\vert  \MOlp \Big\vert .
\end{equation*} 
\end{ob}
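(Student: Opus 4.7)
The plan is to adapt the double-counting argument of \cref{strange} to the orientable setting. The key new ingredient is that an orientable collection carries a canonical orientation once either a face-labelling or a component-labelling is prescribed, so the number of rootings of a face (respectively, of a component) of size $i$ that are compatible with a prescribed orientation reduces from $2i$ in the general case to $i$ in the oriented one.

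First I would argue that the bijection $\delta \mapsto M_\delta$ of \cref{coro} restricts to a bijection between $\GOlp$ and the subset of $\GMlp$ consisting of pairs $(M,l)$ in which $M$ is an orientable $\mu$-collection (with the prescribed white-, black-vertex and face distributions) and $l$ is a \emph{coherent} face-labelling, \ie one whose individual face-orientations glue to a global orientation of $M$. This uses the observation that the polygon-gluing procedure preceding \cref{coro} produces between-class edge identifications precisely when $\delta$ is bipartite, and such gluings are exactly those for which the orientations of neighbouring polygon interiors match along every glued edge.

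The main step is the double counting of the set $T$ of triples $(M,l,r)$, where $M$ is an orientable $\mu$-collection with the prescribed distributions, $l$ is a coherent face-labelling, $r$ is a component-labelling, and the orientations of $M$ induced by $l$ and by $r$ coincide. Counting first via $(M,l)\in\GOlp$: the labelling $l$ pins down the orientation of $M$, and the number of compatible $r$'s equals
$$\prod_i m_i(\mu)! \cdot \prod_i i^{m_i(\mu)} = z_\mu,$$
where the first factor counts numberings of the $\ell(\mu)$ components and the second counts, for each component of size $i$, the $i$ edge-sides (out of the $2i$ available) whose canonical orientation coincides with the fixed one. This yields $|T|=|\GOlp|\cdot z_\mu$. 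Counting first via $(M,r)\in\MOlp$: the rooting $r$ also pins down a global orientation (by \cref{rooting}, every rooted orientable map is canonically oriented), and the analogous computation on the face side gives $z_\lambda$ compatible coherent face-labellings. Hence $|T|=|\MOlp|\cdot z_\lambda$, and equating the two counts proves the observation.

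The only delicate point, which distinguishes this from the proof of \cref{strange}, is the identification of $\GOlp$ with coherently face-labelled orientable collections, together with the factor-of-two reduction in compatible rootings. Both follow from the fact that for an orientable connected map of size $i$ the two edge-sides of each edge are oriented oppositely under any coherent orientation, so exactly $i$ of the $2i$ rootings agree with a prescribed orientation; the rest reduces to the bookkeeping already carried out in \cref{RL}.
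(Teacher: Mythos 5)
Your proof is correct and follows essentially the same double-counting argument as the paper: identify bipartite matchings with coherently face-labelled orientable collections via \cref{coro}, then compare the $z_\mu$ component-labellings against the $z_\lambda$ coherent face-labellings. The only cosmetic difference is that you restrict to pairs of labellings inducing the same orientation, whereas the paper counts all coherent face-labellings and all component-labellings and lets the resulting factors $2^{\ell(\mu)}$ cancel on both sides.
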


\begin{proof}
We identify a matching $\delta \in \Glp$ with a collection of maps $M_\delta \in \GMlp$ with rooted and numbered faces by the procedure described in \cref{42}. 
Observe that a \emph{bipartite} matching corresponds to a collection of \emph{oriented} maps. 
Indeed, the orientations of faces given by the edge-sides: $1, \lambda_1+1 ,\ldots$ are coherent. 
\cref{strange} gives a relation between collections of maps with rooted and numbered faces and collections of maps with rooted and numbered components (lists of maps). An analysis similar to the one given in \cref{strange} convinces us that the quantity $2^{\ell (\mu)} \prod_i  i^{m_i(\lambda)} $ specifies the number of manners of rooting the faces in a \emph{coherent} way and $\prod_i  m_i(\lambda) !$ specifies the number of manners of numbering the faces. 
On the other hand, the quantity $ z_\mu 2^{\ell (\mu)}$ is relevant for numbering and rooting the connected components. 
We use the double counting method and conclude the statement. 
\end{proof}

\section{Measures of non-orientability and non-bipartiteness}
\label{4444}

\subsection{The $b$-Conjecture.}

Equations \cref{star} and \cref{star2} define two families of coefficients $\left( c_{\pi, \sigma}^{\lambda}\right) $ and $\left( h_{\pi, \sigma}^{\lambda} \right)$. 
Goulden and Jackson \cite{MR1325917} discussed some specialisations of the family $\left( c_{\pi, \sigma}^{\lambda}\right) $ and hypothetical combinatorial interpretations of the polynomials $c_{\pi, \sigma}^{\lambda}$ in terms of matchings known as the \emph{Matchings-Jack Conjecture}, see \cref{444}. 
In the same paper they observed that specializations of $h_{\pi, \sigma}^{\lambda} (\beta )$ for $\beta =0,1$ may be expressed in terms of rooted maps, namely
\begin{align*} 
h_{\pi, \sigma}^{\lambda} ( 0 ) &= \Big\vert \big\{ M \in \Ml : 
 M \text{ is orientable} \big\} \Big\vert ,\\
h_{\pi, \sigma}^{\lambda} ( 1 )&=  \Big\vert  \big\{ M \in \Ml \big\} \Big\vert. 
\end{align*} 
\noindent
Based on this observation Goulden and Jackson conjectured that the family $\left( h_{\pi, \sigma}^{\lambda} \right) $  of polynomials may have a combinatorial interpretation. 
The conjecture is known as the \emph{$b$-Conjecture}.

\begin{con}[$b$-Conjecture]
For any partitions $\pi,\sigma,\lambda\vdash n$ the quantity $h_{\pi, \sigma}^{\lambda}$ can be expressed as
\begin{equation*}
h_{\pi, \sigma}^{\lambda} (\beta ) =\sum_{M \in \Ml} \beta^{\eta (M )},
\end{equation*}
\noindent
where $\eta : \Ml \longrightarrow \mathbb{N}_0$ is some hypothetical combinatorial statistic such that $\eta (M)=0$ if and only if $M $ is orientable.
\end{con}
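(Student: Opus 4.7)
The plan is to make the hypothetical statistic $\eta$ concrete by taking La Croix's \emph{measure of non-orientability} $\theta$, defined recursively on rooted maps via a root-edge decomposition and already known to take values in $\N_0$ with $\theta(M)=0$ if and only if $M$ is orientable. With $\eta:=\theta$, the vanishing condition is automatic, so the whole task reduces to proving the polynomial identity
\[
h^{\lambda}_{\pi,\sigma}(\beta)=\sum_{M\in\Ml}\beta^{\theta(M)}.
\]

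First, I would reinterpret the left-hand side of \eqref{star2} combinatorially. Taking the logarithm extracts the \emph{connected} part of the generating series for triples $(J,J,J)$ on the left of \eqref{star}, which on the combinatorial side should match the restriction to connected maps (that is, to a single $M\in\Ml$ rather than lists or collections of maps as in \cref{extension1}). Applying $\alpha t\,\partial_t$ then corresponds to \emph{marking a corner} (equivalently, rooting), which is exactly the structure of $\Ml$. I would make this correspondence precise by proving that both sides of the identity satisfy the same linear recurrence under root-edge deletion: on the left, this recurrence comes from a differential-operator relation for Jack polynomials (\'a la Lassalle), rewritten via the log-derivative into a recurrence for $h^{\lambda}_{\pi,\sigma}$; on the right, it comes from La Croix's recursive definition of $\theta$, where the weight $\beta^{\theta(M)}$ decomposes along the three cases (root edge is a bridge / handle / cross-cap) of the decomposition.

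Second, I would check the base cases, namely partitions $\pi,\sigma,\lambda$ of very small $n$, and compare with the already-established special cases: $\lambda=[1^n]$ and $\lambda=[2,1^{n-1}]$ of Goulden--Jackson, $\pi=\sigma=(n)$ of Kanunnikov--Vassilieva, and one-part cases of Kanunnikov--Promyslov--Vassilieva. As an intermediate checkpoint I would verify the top-degree part: the leading coefficient of $h^{\lambda}_{\pi,\sigma}(\beta)$ should match $|\{M\in\Ml:\theta(M)=d(\pi,\sigma;\lambda)\}|$, which can be approached using the top-degree machinery for $c^{\lambda}_{\pi,\sigma}$ developed in this paper (via the relation between $c$ and $h$ coming from comparing \eqref{star} and \eqref{star2}), together with \cref{strange2} relating orientable maps and bipartite matchings.

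The main obstacle is precisely where all previous attempts on the $b$-Conjecture have stalled: showing that the algebraic recurrence for $h^{\lambda}_{\pi,\sigma}$ coming from the Laplace--Beltrami/Lassalle operator matches La Croix's recursive weight assignment \emph{edge by edge}. In every known partial case this matching can be checked because the root-edge analysis has a restricted combinatorial type, but in the general case the three contributions (bridge, handle, cross-cap) interact in a way that has resisted direct comparison. I do not expect the plan above to close the conjecture in full generality; what seems genuinely attainable with the tools developed in this paper is the \emph{top-degree} version of the identity, i.e.\ the statement modulo $\beta^{d(\pi,\sigma;\lambda)+1}$, as carried out in the main theorem via the statistic $\stat$ on $\Gll$ and translated to the map setting through the correspondence of \cref{42}.
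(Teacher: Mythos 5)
The statement you are addressing is the $b$-Conjecture of Goulden and Jackson, which the paper states precisely as a \emph{conjecture}: it is open, and the paper offers no proof of it (nor claims one). Its contribution is restricted to the top-degree part of the related coefficients $c^\lambda_{\pi,\sigma}$, together with the equivalence (in the appendix) to Do\l{}\oldk{e}ga's top-degree result for $h^\lambda_{\pi,\sigma}$. Your proposal is therefore not a proof but a research plan, and you concede as much in your final paragraph. The decisive gap is exactly the step you flag: showing that the recurrence for $h^\lambda_{\pi,\sigma}$ induced by the log-derivative (or a Laplace--Beltrami-type operator) matches La Croix's recursive weight assignment case by case under root-edge deletion. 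That matching \emph{is} the conjecture; asserting that "both sides satisfy the same linear recurrence" without exhibiting and verifying that recurrence in the bridge, border, twisted-edge and handle cases leaves the entire content unproven.

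Two further points deserve care even as a plan. First, fixing $\eta:=\theta$ is itself a nontrivial commitment: the paper emphasizes that La Croix's definition yields a whole \emph{family} of measures of non-orientability with no canonical member (the handle case involves a choice), and Do\l{}\oldk{e}ga's result only establishes the identity $h^{(n)}_{\pi,\sigma}(\beta)=\sum_M\beta^{\eta(M)}$ for $\beta\in\{-1,0,1\}$; it is not known that any member of this family realizes the conjecture as a polynomial identity. Second, your "intermediate checkpoint" on the top-degree part is sound and is essentially what the paper (and Do\l{}\oldk{e}ga) actually prove, but note that $h^\lambda_{\pi,\sigma}$ can achieve its maximal degree $n+1-\ell(\pi)-\ell(\sigma)$ only when $\lambda=(n)$, so the top-degree statement for general $\lambda$ lives naturally on the $c$-side (via lists of unicellular maps), not directly on the $h$-side. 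In short: the vanishing property of $\eta$ is indeed automatic for La Croix's statistic, but the polynomial identity remains open, and your proposal does not close it.
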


\subsection{Root-deletion procedure and a measure of non-orientability.}
\label{M}

The statistic $\eta$ from $b$-Conjecture should be a marker of non-orientability of maps. 
We shall present the definition of the measure of non-orientability introduced by La Croix \cite[Definition 4.1]{Lacroix}, which seems to be a good candidate for the hypothetical statistic conjectured by Goulden and Jackson. 
We adapt the statistic given by La Croix to the case of lists of maps.

\begin{de}[Root-deletion procedure]
\label{rootd}
Denote by $e$ the root edge of the map $M$. 
By deleting $e$ from $M$ we create either a new map, or two new maps. 
We give the canonical procedure of rooting it or them. 
By rooting a map we will understand choosing an oriented corner, see \cref{deletion}. 
Denote by $c$ the root corner of $M$. 

Suppose that $M \setminus e$ is connected. 
Observe that $c$ is contained in the unique oriented corner of $M \setminus e$, we define such an oriented corner as the root of $M \setminus e$. 

Suppose that $M \setminus e$ has two connected components. One of them can be rooted as above. Observe that the first corner in the root face of $M$ following $c$ is contained in the unique oriented corner of the second component of $M \setminus e$, see \cref{deletion}. We define such an oriented corner as the root of this component. 
\end{de}

\begin{rem}
The Root-deletion procedure is defined for all maps, not necessary bipartite. In particular, we do not require that the rooted vertex is black. 
\end{rem}

\begin{figure}\centering
\begin{tikzpicture}[scale=0.45,
white/.style={circle,thick,draw=black,fill=white,inner sep=2pt},
white2/.style={circle,thick,draw=white,fill=white,inner sep=2pt},
black/.style={circle,draw=black,fill=black,inner sep=2pt},
connection/.style={draw=gray,thick,gray,auto}
]
\scriptsize

\begin{scope}
\clip (0,0) rectangle (10,10);

\fill[pattern color=\faceAfill,pattern=north west lines] (0,0) rectangle (10,10);
\draw[very thick,black,decoration={
    markings,
    mark=at position 0.9  with {\arrow{>}}},
    postaction={decorate},double=red]   
(3,2) -- (4.5,4.33);
\fill[white] (2,7) --(3,2) -- (5,5);
\fill[pattern color=\faceAfill,pattern=north west lines] (2,7) --(3,2) -- (5,5);

\draw[very thick,blue,decoration={
    markings,
    mark=at position 0.9  with {\arrow{>}}},
    postaction={decorate},double=red]   
(3,2) -- (2.5,4.5);
\fill[white] (-3,3) --(3,2) -- (2,7) ;
\fill[pattern color=\faceAfill,pattern=north west lines] (-3,3) --(3,2) -- (2,7) ;
\draw[very thick,blue,decoration={
    markings,
    mark=at position 0.9  with {\arrow{>}}},
    postaction={decorate},double=red]   
(5,5) -- (7.3,5);
\fill[\faceBfill] (5,5) rectangle (8,8);

\filldraw[red,fontscale=2] (5.4,2.8) circle (0pt) node[anchor=east] {$c$};
\filldraw[red,fontscale=2] (5.9,4) circle (0pt) node[anchor=east] {$c'$};

\draw[-{>[flex=0.75]},>=stealth,red,thick] (4.66,2.37) arc (00:75:1cm);
\draw[-{>[flex=0.75]},>=stealth,red,thick] (4.5,4.3) arc (240:359:0.7cm);

\draw (2,7)  node (b1)     [black] {};
\draw (3,2)  node (w1)     [black] {};
\coordinate (w1prim)      at (13,2);
\coordinate (w1bis)       at (3,12);
\coordinate (b1bis)       at (2,-3);

\draw (7,3)  node (bb1)    [black] {};
\coordinate (bb1prim) at (-3,3);

\draw (5,5)  node (AA)     [black] {};
\draw (8,5)  node (BA)     [black] {};
\draw (5,8)  node (AB)     [black] {};
\draw (8,8)  node (BB)     [black] {};

\draw[connection]         (w1)      to  (AA);
\draw[connection]         (AA)      to  (AB);
\draw[connection]         (AB)      to  (BB);
\draw[connection]         (BB)      to  (BA);
\draw[connection]         (BA)      to  (AA);

\draw[connection]         (b1)       to  (w1);
\draw[connection]         (b1)       to  (w1bis);
\draw[connection]         (bb1)      to  (w1prim);

\draw[connection]  (w1)      to  (bb1);
\draw[connection]         (w1)      to  (bb1prim);
\draw[connection]         (w1)      to  (b1bis);

\end{scope}

\draw[very thick,decoration={
    markings,
    mark=at position 0.666  with {\arrow{>}}},
    postaction={decorate}]  
(0,0) -- (10,0);

\draw[very thick,decoration={
    markings,
    mark=at position 0.666  with {\arrow{>}}},
    postaction={decorate}]  
(0,10) -- (10,10);

\draw[very thick,decoration={
    markings,
    mark=at position 0.666  with {\arrow{>>}}},
    postaction={decorate}]  
(10,0) -- (10,10);

\draw[very thick,decoration={
    markings,
    mark=at position 0.666  with {\arrow{>>}}},
    postaction={decorate}]  
(0,0) -- (0,10);
\end{tikzpicture}
\caption{The oriented corner $c$ (red arrow) equivalent to the root (the black arrow) of a map. The first corner in the root face of the map following $c$ is labelled by $c'$ (red arrow). 
By deleting the root edge the map splits into two new maps. 
The oriented corners $c$ and $c'$ are contained in two oriented corners of the new maps. They give the roots of those maps (the blue arrows).}
\label{deletion}
\end{figure}
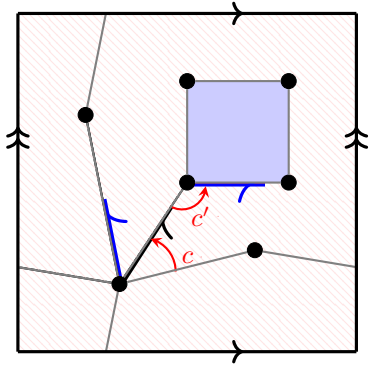

We classify the root edges of maps. Let $f$ be the number of faces of a map $M$ with the root vertex $e$;
\begin{enumerate}
\item $e$ is called  a \emph{bridge} if $M\setminus e$ is not connected,
\item otherwise $M\setminus e$ is connected and $e$ is called
\begin{itemize}
\item a \emph{border} if the number of faces in $M\setminus e$ is equal to $f-1$,
\item a \emph{twisted edge} if the number of faces in $M\setminus e$ is equal to $f$,
\item a \emph{handle} if the number of faces in $M\setminus e$ is equal to $f+1$.
\end{itemize}
\end{enumerate}

\begin{rem}
A leaf (\ie an edge connecting a vertex of degree 1) is considered as a bridge.
\end{rem}

\begin{de}\cite[Definition 4.1]{Lacroix} 
\label{measure} 
For a rooted map $M$, an invariant $\eta (M)$ is defined inductively as
follows.
\begin{enumerate}
\item If $M$ has no edges then $\eta (M) =0$.
\item Otherwise $M$ has the root edge $e$,
\begin{itemize}
\item $\eta (M) = \eta(M_1) +\eta (M_2)$ if $e$ is a bridge, while $M_1$ and $M_2$ are the connected components of $M\setminus e$,
\item $\eta (M) = \eta(M\setminus e )$ if $e$ is a border,
\item $\eta (M) = \eta(M\setminus e ) +1$ if $e$ is a twisted edge, 
\item if $e$ is a handle, there exists a unique map $M'$ with the root edge $e'$ constructed by \emph{twisting} the edge $e$ in $M$, in such a way that $e'$ is a handle and the maps $M\setminus e$, $M'\setminus e $ are equal. 
In this case we require that 
\begin{equation*}
\left\lbrace \eta (M), \eta (M')  \right\rbrace = 
\left\lbrace \eta (M\setminus e), \eta (M\setminus e) +1  \right\rbrace .
\end{equation*}
At most one of the maps $M$, $M'$ is orientable. For such a map $M$ we require $\eta (M)=\eta (M\setminus e)$.
\end{itemize}
\end{enumerate}

We call such an invariant a \emph{measure of non-orientability}.
\end{de}

Observe that the above definition introduces the whole family of measures of non-orientability $\eta$ 
and among of them there is no \emph{canonical measure of non-orientability}.

\begin{rem}
For a given rooted map $M$ 
\begin{equation*}
\eta \left(  M \right) = 0 \text{ if and only if } M \text{ is orientable}.
\end{equation*} 
\noindent
Indeed, removing twisted edges or handles during the root-deletion procedure are the only possibilities of increasing the recursively-defined statistic $\eta$. 
An orientable map does not have any twisted edges (a map with a twisted edge is embedded in a surface which contains the Möbius strip, hence is nonorientable). 
The recursive definition of $\eta$ guarantees that removing handles from an orientable map does not increase the statistic $\eta$. Hence for an orientable map $M$, we have $\eta (M) =0$. 
A reverse analysis or a simple induction on the number of edges provides the reverse implication. 
\end{rem}

\begin{de}
\label{measure1}
For a rooted $\mu$-list of maps $M =M_1 ,\ldots M_k$ we define a \emph{measure of non-orientability} $\eta$ of $M$ by 
\begin{equation*}
\eta\left( M \right)  := \eta_1 \left( M_1 \right) +\cdots + \eta_k \left( M_k \right)
\end{equation*}
\noindent
for any measures of non-orientability $\eta_i$ from \cref{measure}.
\end{de}

\subsection{Unhandled and unicellular maps.}

\begin{de}
The rooted map $M$ is called \emph{unhandled} if by iteratively performing the root-deletion process (see \cref{rootd}) it does not have any handles. 
The map $M$ is called \emph{unicellular} if it has only one face.
\end{de}

From now on we fix one of measures of non-orientability~$\eta$ of the class of maps. Do\l{}\oldk{e}ga \cite[Section 4]{Dol17} showed that for such a measure $\eta$ the polynomial $H_\eta$ given by the sum 
\begin{equation*}
\left(  H_\eta \right)_{\pi,\sigma}^\lambda := \sum_{M \in \Ml} \beta^{\eta (M)}
\end{equation*}
\noindent
has degree at most equal to $n +1 -\ell (\pi)-\ell (\sigma ) $ and the leading coefficient is enumerated by \emph{unhandled} \emph{unicellular} maps. In particular, $\left(  H_\eta \right)_{\pi,\sigma}^\lambda$ may achieve this bound of the degree only if $\lambda =(n)$. 
He also showed that the aforementioned leading coefficient is also enumerated by \emph{oriented} maps with arbitrary face-type, namely 
\begin{equation*}
\Big\vert M \in \M  : M \emph{ is orientable }\Big\vert = \Big\vert M \in M_{\pi,\sigma}^{(n)} : M \emph{ is unhandled }  \Big\vert .
\end{equation*}
\noindent
In fact, there is an explicit bijection between those two families of maps. 
Do\l{}\oldk{e}ga proved \cite[Theorem 1.4]{Dol17} that for the statistic $\eta$
\begin{equation*}
h_{\pi,\sigma}^{(n)} (\beta ) =
\sum_{M \in M_{\pi ,\sigma}^{(n)}} \beta^{\eta (M)}
\end{equation*}
\noindent
holds true for $\beta \in \{-1,0,1\}$, moreover for $M \in M_{\pi ,\sigma}^{(n)}$ the statistic $\eta (M) =0$ vanishes if and only if $M$ is \emph{orientable}; furthermore $\eta (M) = n +1 -\ell (\pi)-\ell (\sigma ) $ if and only if $M$ is \emph{unhandled} and \emph{unicellular}.

\smallskip
\smallskip

The result of Do\l{}\oldk{e}ga is easily transferable to the context of $\mu$-lists of maps. 
Let us choose the measures of non-orientability $\eta_i$ for $i \in [k], k=\ell (\mu )$, which form the measure $\eta$ as it is described in \cref{measure1}. 

\begin{lem}
\label{hak}
For the statistic $\eta$, the polynomial $ \left(  H_\eta \right)_{\pi,\sigma}^{\lambda; \bullet}$ given by the sum 
\begin{equation}
\label{polynomial}
\left(  H_\eta \right)_{\pi,\sigma}^{\lambda; \bullet} :=\sum_{\mu : \lambda \preceq \mu} 
\left(  H_\eta \right)_{\pi,\sigma}^{\lambda; \mu}
\end{equation}
\noindent 
where
\begin{equation}
\label{polynomial2}
\left(  H_\eta \right)_{\pi,\sigma}^{\lambda; \mu} (\beta )=\sum_{M \in \Mlp} \beta^{\eta (M)}
\end{equation}
\noindent
is of degree at most $d (\pi, \sigma ;\lambda ) $. Moreover, a $\mu$-lists of maps $M$ contributes to the ground term if and only if $M$ is a list of orientable maps. The $\mu$-lists of maps $M$ contributes to the leading coefficient if and only if $M$ is a list of unicellular and unhandled maps, in particular $\mu = \lambda$.
\end{lem}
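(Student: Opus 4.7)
The plan is to reduce the lemma to Do\l{}\oldk{e}ga's theorem on connected rooted maps by exploiting that both the statistic $\eta$ and all relevant combinatorial data split additively over the components of a $\mu$-list. For a $\mu$-list $M=(M_1,\ldots,M_k)\in\Mlp$, set
\[
\lambda^{(i)} := \Lambda_{\mathcal{F}}(M_i), \quad \pi^{(i)} := \Lambda_{\mathcal{W}}(M_i), \quad \sigma^{(i)} := \Lambda_{\mathcal{B}}(M_i).
\]
Since $\eta(M)=\eta_1(M_1)+\cdots+\eta_k(M_k)$ by \cref{measure1}, the polynomial in \cref{polynomial2} factorises as
\[
\left(H_\eta\right)_{\pi,\sigma}^{\lambda;\mu}(\beta) \;=\; \sum \; \prod_{i=1}^{k} \left(H_{\eta_i}\right)_{\pi^{(i)},\sigma^{(i)}}^{\lambda^{(i)}}(\beta),
\]
where the outer sum ranges over admissible tuples of partitions with $|\lambda^{(i)}|=|\pi^{(i)}|=|\sigma^{(i)}|=\mu_i$ and $\bigcup_i \lambda^{(i)} = \lambda$, $\bigcup_i \pi^{(i)} = \pi$, $\bigcup_i \sigma^{(i)} = \sigma$.

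Next I apply Do\l{}\oldk{e}ga's bound, recalled just above, to each factor: it has degree at most $\mu_i + 1 - \ell(\pi^{(i)}) - \ell(\sigma^{(i)})$, attained only when $M_i$ is unicellular (\ie $\lambda^{(i)}=(\mu_i)$), in which case the leading coefficient enumerates unhandled unicellular maps. Adding these bounds across components yields the uniform upper bound
\[
\sum_{i=1}^k \bigl(\mu_i + 1 - \ell(\pi^{(i)}) - \ell(\sigma^{(i)}) \bigr) \;=\; n + \ell(\mu) - \ell(\pi) - \ell(\sigma),
\]
which is independent of the particular decomposition thanks to $\sum_i \ell(\pi^{(i)}) = \ell(\pi)$ and similarly for $\sigma$.

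The comparison with $d(\pi,\sigma;\lambda) = n - \ell(\pi) - \ell(\sigma) + \ell(\lambda)$ is then immediate: whenever $\lambda\preceq\mu$ each part of $\mu$ is a sum of at least one part of $\lambda$, so $\ell(\mu) \leq \ell(\lambda)$, with equality precisely when $\mu=\lambda$. Summing over all such $\mu$ as in \cref{polynomial} yields degree at most $d(\pi,\sigma;\lambda)$, and only the summand $\mu=\lambda$ can contribute to the leading coefficient. When $\mu=\lambda$, each $\lambda^{(i)}=(\lambda_i)$ is a single part, hence every $M_i$ is automatically unicellular, and by Do\l{}\oldk{e}ga's characterisation the top-degree term enumerates $\lambda$-lists whose every component is unhandled. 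Finally, the ground term is immediate from $\eta(M)=0 \Leftrightarrow \eta_i(M_i)=0$ for all $i$, which by the remark following \cref{measure} is equivalent to each $M_i$ being orientable. I do not foresee any serious obstacle beyond the bookkeeping of the decomposition; the crucial input is Do\l{}\oldk{e}ga's single-map theorem, and the rest follows from the additivity of $\eta$ over components together with the length inequality $\ell(\mu)\leq\ell(\lambda)$.
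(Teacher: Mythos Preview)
Your proof is correct and follows essentially the same route as the paper's own proof: decompose a $\mu$-list into its connected components, apply Do\l{}\oldk{e}ga's single-map bound to each factor, and sum to obtain $n+\ell(\mu)-\ell(\pi)-\ell(\sigma)$. You are in fact a bit more explicit than the paper in spelling out the inequality $\ell(\mu)\le\ell(\lambda)$ (with equality iff $\mu=\lambda$) and in handling the ground term via the characterisation of $\eta(M)=0$, both of which the paper leaves implicit.
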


\begin{proof}
Each $M  = (M_1 , \ldots , M_k ) \in \Mlp$ decompose into a list of maps $M_i \in M^{\lambda_{\vert \mu_i}}_{{\pi_{\vert \mu_i}},{\sigma_{\vert \mu_i}}}$ for some partitions $\pi_{\vert \mu_i} ,\sigma_{\vert \mu_i} ,\lambda_{\vert \mu_i} \vdash \mu_i$ satisfying
\begin{equation*}
\bigcup_{i=1}^k \pi_{\vert \mu_i} =\pi , \quad 
\bigcup_{i=1}^k \sigma_{\vert \mu_i} =\sigma , \quad 
\bigcup_{i=1}^k \lambda_{\vert \mu_i} =\lambda .
\end{equation*}
\noindent
We denote by $\mathcal{P}_\pi^\mu$ the set of lists of partitions $\left( \pi_{\vert \mu_1} ,\ldots , \pi_{\vert \mu_k}\right) $, where $\pi_{\vert \mu_i} \vdash \mu_i $ and 
\begin{equation*}
\bigcup_{i=1}^k \pi_{\vert \mu_i} =\pi .
\end{equation*}
\noindent
Observe, that \cref{polynomial} can be rewritten in such a way:
\begin{equation*}
\sum_{M \in \Mlp} \beta^{\eta (M)} = 
\sum_{\substack{(\pi^1 ,\ldots , \pi^k) \in \mathcal{P}_\pi^\mu \\ (\sigma^1 ,\ldots ,\sigma^k) \in \mathcal{P}_\sigma^\mu \\ (\lambda^1 ,\ldots ,\lambda^k) \in \mathcal{P}_\lambda^\mu}}
\prod_{i=1}^k 
\sum_{M \in M^{\lambda^i}_{{\pi^i},{\sigma^i}}} \beta^{\eta_i (M)} .
\end{equation*}
\noindent
We use the result of Do\l{}\oldk{e}ga for each most right side sum separately. 
Each such a sum  has degree at most equal to $\mu_i +1 -\ell (\pi^i)-\ell (\sigma^i ) $ and the top-degree coefficient is enumerated by unhandled unicellular maps. 
Since
\begin{equation*}
n +\ell ( \mu ) -\ell (\pi )-\ell (\sigma ) = \sum_{i=1}^k \Big( \mu_i +1 -\ell (\pi^i)-\ell (\sigma^i ) \Big) ,
\end{equation*}
\noindent
we conclude that \cref{polynomial} has degree at most equal to $d (\pi, \sigma ;\lambda ) $ and the top-degree coefficient is enumerated by $\mu$-lists of unhandled unicellular maps. 
\end{proof}

\begin{cor}
\label{wniosek}
For three given partitions $\pi,\sigma ,\lambda\vdash n$ we have 
\begin{equation*}
\Big\vert M \in \Mp  : M \emph{ is orientable }\Big\vert = \Big\vert M \in M_{\pi,\sigma}^{\mu ;\mu} : M \emph{ is unhandled }  \Big\vert .
\end{equation*}
\end{cor}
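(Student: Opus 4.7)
The plan is to reduce the identity to Do\l{}\oldk{e}ga's componentwise statement
\[\Big| M \in \M : M \text{ orientable} \Big| = \Big| M \in M^{(n)}_{\pi,\sigma} : M \text{ unhandled} \Big|\]
(recorded in the paragraph just before Lemma \ref{hak}), applied separately on each connected component of a $\mu$-list of maps. I would first observe that both properties factor over components: a $\mu$-list $M=(M_1,\ldots,M_k)$ is orientable iff every $M_i$ is orientable and, since the root-deletion procedure of Definition \ref{rootd} and the notion of a handle depend only on the rooted connected component, $M$ is unhandled iff every $M_i$ is unhandled. In addition, a $\mu$-list $M \in M^{\mu;\mu}_{\pi,\sigma}$ is forced to have each component $M_i$ unicellular with face $(\mu_i)$, since there are $\mu_i$ edges on component $i$.

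Next, using the index set $\mathcal{P}^\mu_\pi$ introduced in the proof of Lemma \ref{hak}---ordered tuples $(\pi^1,\ldots,\pi^k)$ with $\pi^i \vdash \mu_i$ and $\bigcup_i \pi^i = \pi$, and analogously $\mathcal{P}^\mu_\sigma$---I would expand both sides as matching sums of products:
\begin{align*}
\Big| M \in \Mp : M \text{ orientable} \Big|
&= \sum_{\substack{(\pi^i)\in\mathcal{P}^\mu_\pi \\ (\sigma^i)\in\mathcal{P}^\mu_\sigma}} \prod_{i=1}^{k} \Big| M \in M^{\bullet}_{\pi^i,\sigma^i} : M \text{ orientable} \Big|, \\
\Big| M \in M^{\mu;\mu}_{\pi,\sigma} : M \text{ unhandled} \Big|
&= \sum_{\substack{(\pi^i)\in\mathcal{P}^\mu_\pi \\ (\sigma^i)\in\mathcal{P}^\mu_\sigma}} \prod_{i=1}^{k} \Big| M \in M^{(\mu_i)}_{\pi^i,\sigma^i} : M \text{ unhandled} \Big|.
\end{align*}
Do\l{}\oldk{e}ga's identity applied with $n$ replaced by $\mu_i$ and $(\pi,\sigma)$ replaced by $(\pi^i,\sigma^i)$ then equates the $i$-th factor on the two sides, so the corollary would follow by term-wise identification.

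The main (essentially only) obstacle I anticipate is verifying the two localisation claims rigorously---that both orientability and unhandledness reduce to their per-component versions---and making sure that the multinomial factor arising from repeated parts of $\mu$ does not introduce any discrepancy. Since both expansions are indexed by the same set of ordered tuples, any over-counting from equal values of $\mu_i$ appears identically on the two sides and cancels, so no further bookkeeping should be needed.
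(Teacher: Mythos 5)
Your proposal is correct and follows essentially the same route as the paper: the paper's proof likewise reduces to Do\l{}\oldk{e}ga's connected-case statement (via the explicit bijection of \cite[Corollary 3.10]{Dol17}) applied to each connected component of the $\mu$-list, the componentwise factorisation being exactly the decomposition over $\mathcal{P}^\mu_\pi\times\mathcal{P}^\mu_\sigma$ already used in the proof of \cref{hak}. Your expansion just makes explicit the bookkeeping that the paper leaves implicit.
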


\begin{proof}
Fix a list $M \in M_{\pi,\sigma}^{\mu ;\mu} $ of unhandled and unicellular maps. 
For each connected component of $M$ we use the aforementioned bijection between such maps and oriented maps with arbitrary face-type given by Do\l{}\oldk{e}ga \cite[Corollary 3.10]{Dol17}. 
We get a $\mu$-list of orientable maps with arbitrary face type.
\end{proof}

\subsection{Measure of non-bipartiteness for matchings.}
\label{problemy}
The hypothetical statistic $\wt_\lambda$ from the Matchings-Jack Conjecture should be a marker of non-bipartiteness for matchings. 
Naturally, matchings correspond to lists of maps, in particular bipartite matching to lists of oriented maps. 

The naive thought how the statistic $\wt_\lambda$ should be defined is to adapt the measure of non-orientability introduced by La Croix by the correspondence between matchings and collections of maps given by \cref{coro}. 
Regretfully, the measure introduced by La Croix is defined for lists of rooted maps, however there is no canonical way to create such a list from an element of $\GMlp$. 

However, there is one special class of matchings, which may be identified with lists of rooted maps, namely $\mathcal{G}_{\pi,\sigma}^{\lambda ; \lambda }$. When the number of faces is equal to the number of connected components, numbering and rooting \emph{faces} overlap with numbering and rooting \emph{components}. For a fixed measure of non-orientability $\eta$ we define
\begin{align*} 
\stat : \quad \mathcal{G}_{\pi,\sigma}^{\lambda ; \lambda } 
&\quad\longrightarrow\quad\left[ d (\pi, \sigma ;\lambda )\right] \\
\delta 
&\quad\longmapsto\quad \stat\left(\delta \right) := \eta \left( M_\delta \right) 
\end{align*} 
\noindent
For given partitions $\lambda, \pi ,\sigma \vdash n$ we define the following polynomial
\begin{equation}
\label{stt}
\left( G_\eta \right)_{\pi,\sigma}^{\lambda ;\lambda} := 
\sum_{\delta \in \Gll} \beta^{\stat (\delta )}.
\end{equation}

\begin{de}
\label{unhandledM}
We say that a matching $\delta \in \Gll$ is \emph{unhandled} if the corresponding map $M_\delta \in M_{\pi, \sigma} ^{\lambda ; \lambda}$ is so.
\end{de}

\begin{lem}
\label{lemlem}
For any triple of partitions $\pi,\sigma ,\lambda \vdash n$ the corresponding polynomial $\left( G_\eta \right)_{\pi,\sigma}^{\lambda ;\lambda} $ is of degree at most $d (\pi, \sigma ;\lambda ) $. Moreover, the matching $\delta$ contributes to the ground term if and only if $\delta$ is bipartite. The matching $\delta$ contributes to the leading coefficient if and only if $\delta$ is an unhandled matching. 

Moreover, the top-degree coefficient may be enumerated in two different manners:
\begin{equation*}
\Big\vert \delta \in \Gll : \delta \emph{ is unhandled }  \Big\vert =
\sum_{\nu : \nu \preceq \lambda} \dfrac{z_\lambda}{z_\nu} \Big\vert \delta \in \mathcal{G}_{\pi, \sigma}^{\nu ;\lambda}  : \delta\emph{ is bipartite }\Big\vert .
\end{equation*}
\end{lem}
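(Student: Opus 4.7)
The plan is to reduce the statement to \cref{hak} and \cref{wniosek} via the bijection between matchings and collections of maps from \cref{42}.

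First I would observe that when $\mu=\lambda$ every map $M_\delta$ attached to $\delta\in\Gll$ consists of \emph{unicellular} components. Indeed, a $\lambda$-list of maps whose face-type is $\lambda$ decomposes the parts of $\lambda$ into face-types $\lambda^i\vdash\lambda_i$ whose concatenation is $\lambda$; since each part of any $\lambda^i$ is bounded by $\lambda_i$, an induction on the largest part of $\lambda$ forces $\lambda^i=(\lambda_i)$ for every $i$. Each component therefore carries a single face, so a face-labelling is the same datum as a component-labelling, and the bijection $\delta\mapsto M_\delta$ of \cref{coro} restricts to a bijection $\Gll\leftrightarrow M^{\lambda;\lambda}_{\pi,\sigma}$ intertwining $\stat$ with $\eta$. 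Consequently
\begin{equation*}
\left(G_\eta\right)_{\pi,\sigma}^{\lambda;\lambda}=\left(H_\eta\right)_{\pi,\sigma}^{\lambda;\lambda}.
\end{equation*}

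Next I would specialize the proof of \cref{hak} to a single $\mu$. Its component-wise decomposition yields a degree bound $n+\ell(\mu)-\ell(\pi)-\ell(\sigma)$ for $\left(H_\eta\right)_{\pi,\sigma}^{\lambda;\mu}$. Since $\lambda\preceq\mu$ forces $\ell(\mu)\leq\ell(\lambda)$, with equality iff $\mu=\lambda$, only the $\mu=\lambda$ summand attains $d(\pi,\sigma;\lambda)=n+\ell(\lambda)-\ell(\pi)-\ell(\sigma)$. This already gives the degree bound on $\left(G_\eta\right)_{\pi,\sigma}^{\lambda;\lambda}$, and reading off the top coefficient from \cref{hak} identifies it with the number of unhandled maps in $M^{\lambda;\lambda}_{\pi,\sigma}$, hence with the number of unhandled matchings in $\Gll$ (\cref{unhandledM}). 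For the ground term, $\stat(\delta)=\eta(M_\delta)$ vanishes iff $M_\delta$ is orientable (remark following \cref{measure}), and by the gluing procedure of \cref{42} this happens exactly when $\delta$ is bipartite.

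Finally, for the two enumerations of the leading coefficient I would combine \cref{wniosek} with \cref{strange2}. \cref{wniosek} identifies the number of unhandled maps in $M^{\lambda;\lambda}_{\pi,\sigma}$ with the number of orientable maps in $M^{\bullet;\lambda}_{\pi,\sigma}$; splitting the latter set by face-type $\nu\preceq\lambda$ and applying \cref{strange2} to each piece (rewriting the count of orientable maps in $M^{\nu;\lambda}_{\pi,\sigma}$ as $\frac{z_\lambda}{z_\nu}$ times the count of bipartite matchings in $\mathcal{G}^{\nu;\lambda}_{\pi,\sigma}$) yields the asserted identity. The main obstacle is the unicellularity step: one must carefully verify that when $\mu=\lambda$ the face-labelling used to define $\stat$ is literally the same datum as a component-labelling, so that $\left(G_\eta\right)_{\pi,\sigma}^{\lambda;\lambda}$ and $\left(H_\eta\right)_{\pi,\sigma}^{\lambda;\lambda}$ coincide as polynomials; once this identification is in hand, everything else is a clean combination of \cref{hak}, \cref{wniosek}, and \cref{strange2}.
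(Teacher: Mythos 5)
Your proof is correct and follows the same route as the paper's: identify $\left( G_\eta \right)_{\pi,\sigma}^{\lambda ;\lambda}$ with $\left( H_\eta \right)_{\pi,\sigma}^{\lambda ;\lambda}$, then invoke \cref{hak} for the degree and the characterizations of the extreme coefficients, and combine \cref{wniosek} with \cref{strange2} for the two enumerations. You actually supply more detail than the paper does at the one point it leaves implicit — the unicellularity of the components when $\mu=\lambda$, which is what makes face-labellings and component-labellings the same datum — and that verification is sound (one can also see it by counting: $\sum_i\ell(\lambda^i)=\ell(\lambda)$ with $\ell(\lambda)$ summands each at least $1$).
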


\begin{proof}
Observe that for fixed measure of non-orientability $\eta$ polynomials $\left( G_\eta \right)_{\pi,\sigma}^{\lambda ;\lambda} $ and $\left( H_\eta \right)_{\pi,\sigma}^{\lambda ;\lambda} $ are equal. 
The first statement follows immediately from \cref{hak}. 
The second statement is an easy conclusion of \cref{wniosek} and relation given in \cref{strange2}.
\end{proof}

\section{Jack characters and structure constants}
\label{5555}

\subsection{Jack characters}
We expand Jack polynomial in the basis of power-sum symmetric functions:
\begin{equation} 
\label{eq:definition-theta}
J^{(\alpha)}_\lambda = \sum_\mu \theta^{(\alpha)}_\mu(\lambda)\ p_\mu. 
\end{equation}
The above sum runs over partitions $\mu$ such that $|\mu|=|\lambda|$. 
The coefficient $\theta^{(\alpha)}_\mu(\lambda)$ is called \emph{unnormalized Jack character}.

Jack characters $\theta^{(\alpha)}_\mu$ provide a kind of \emph{dual} information about the Jack polynomials.
Better understanding of the combinatorics of Jack characters may lead to a better understanding of Jack polynomials themselves. 
This kind of approach may be traced back to the work of Kerov and Olshanski \cite{MR1288389}. 
For a fixed conjugacy class~$\mu$ they considered characters of the symmetric group evaluated on $\mu$. 
This is opposite to the usual way of viewing the characters of the symmetric groups, namely to fix the representation $\lambda$ and to consider the character as a function of the conjugacy class~$\mu$.
Lassalle \cite{MR2424904,MR2562783} adapted idea of Kerov and Olshanski to the framework of Jack characters.

As Jack symmetric functions $\left( J^{(\alpha)}_\lambda \right)_\lambda $ form a basis of the symmetric functions, the functions $\left( \theta^{(\alpha)}_\mu\right)_{\mu \vdash n} $ form a basis of the algebra of functions on Young diagrams with $n$ boxes \cite[Proposition 4.1]{Fer12}. 
Do\l{}\oldk{e}ga and F\'eray \cite[Appendix B.2]{Dolega2014} showed that the coefficients appearing in the expansion of a pointwise product of two unnormalized Jack characters in the unnormalized Jack character basis coincide with the connection coefficients from \cref{star}, namely
\begin{equation*}
\theta^{(\alpha)}_\pi \cdot\theta^{(\alpha)}_\sigma=\sum_{\mu\vdash n} c_{\pi,\sigma}^\mu \theta^{(\alpha)}_\mu .
\end{equation*}
\noindent
for all triples of partitions $\pi,\sigma,\mu \vdash n$. This observation encourages us to look more closely into the field of connection coefficients via the context of Jack characters.

\subsection{Normalized Jack characters}
\label{sec:jack-first-definition}
We define Jack characters $\Ch_\pi$ by a choice of the normalization of $\theta^{(\alpha)}_\pi$. 
We will use the normalization introduced by Do\l{}\oldk{e}ga and F\'eray \cite{Dolega2014} which offers some advantages over the original normalization of Lassalle. 
Therefore, with the right choice of the multiplicative constant, the unnormalized Jack character
$\theta_{\lambda}^{(\alpha)}(\pi)$ from \cref{eq:definition-theta} 
becomes the \emph{normalized Jack character $\Ch^{(\alpha)}_\pi(\lambda)$}, defined as follows.

\begin{de}
\label{def:jack-character-classical}
For a given number $\alpha>0$ and a partition $\pi$, the \emph{normalized Jack character
$\Ch_\pi^{(\alpha)}(\lambda)$}  is defined by:
\begin{equation*}
\Ch_{\pi}^{(\alpha)}(\lambda):=
\begin{cases}
{\dfrac{1}{\sqrt{\alpha}}}^{|\pi|+\ell(\pi)}
\dbinom{|\lambda|-|\pi|+m_1(\pi)}{m_1(\pi)}
\ z_\pi \ \theta^{(\alpha)}_{\pi \cup 1^{|\lambda|-|\pi|}}(\lambda)
&\text{if }|\lambda| \ge |\pi| ,\\
0 & \text{if }|\lambda| < |\pi|,
\end{cases}
\end{equation*}
where $z_\pi$ is the standard numerical factor, and $\cup$ denotes concatenation of two partitions, see \cref{p1}.
The choice of an empty partition $\pi=\emptyset$ is acceptable; in this case
$\Ch_\emptyset^{(\alpha)}(\lambda)=1$.
\end{de}

\subsection{Structure constants}
\emph{Structure constants} $\g{\mu}$ of Jack characters are defined by expansion of the \emph{pointwise product} of two Jack characters in the basis of Jack characters:
\begin{equation*}
\Ch_\pi \cdot \Ch_\sigma =\sum_\mu \g{\mu} (\delta) \Ch_\mu  .
\end{equation*}

Explicit motivation for studying such quantities comes from a special choice of the deformation parameter $\alpha =1$, when Jack polynomials coincide with Schur polynomials.
In this case, Frobenius duality ensures that the structure constants coincide with the \emph{connection coefficients for the symmetric groups} \cite{IK99}.

Do\l{}\oldk{e}ga and F\'eray proved \cite[Theorem 1.4]{Dolega2014} that each structure constant $g^\mu_{\pi,\sigma}$ is a polynomial in the variable $\delta :=  \sqrt{\alpha} - \dfrac{1}{\sqrt{\alpha}}$ of degree bounded as follows:
\begin{equation}
\label{estimation}
\deg_\delta g^\mu_{\pi,\sigma} \leq \min_{i=1,2,3} \Big( n_i ( \pi )+  n_i ( \sigma ) -  n_i ( \mu )\Big), 
\end{equation}
\noindent
where 
\begin{align*} 
n_1 ( \pi ) &= |\pi |+\ell (\pi), \\
n_2 ( \pi ) &= |\pi |-\ell (\pi), \\
n_3 ( \pi ) &= |\pi |-\ell (\pi) +m_1 (\pi ).
\end{align*}  
\noindent
For example, we have
\begin{align*} 
\Ch_3 \Ch_2 = &
6 \delta \Ch_3+\Ch_{3,2}+  6 \Ch_{2,1} + 6 \Ch_4, \\
\Ch_{3} \Ch_{3} =&
(6 \delta^2+3) \Ch_{3} + 9 \delta \Ch_{2,1} + 18 \delta \Ch_{4}  + 3 \Ch_{1,1,1} +\\
& +9 \Ch_{3,1} + 9 \Ch_{2,2} + 9 \Ch_5 + \Ch_{3,3}.
\end{align*}
The numerical computations, such as the ones above, suggest that the structure constants of Jack characters might have some algebraic and combinatorial structure, which was proposed in the following conjecture \cite[Conjecture 0.1]{Sniady2016}.

\begin{con}[Structure constants of Jack characters]
\label{conj:positivity-structure-coefficients}
For any partitions $\pi,\sigma,\mu$, the corresponding structure constant
\[ g^\mu_{\pi,\sigma}(\delta)\in\Q[\delta] \]
is a polynomial with non-negative integer coefficients.
\end{con}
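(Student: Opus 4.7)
The plan is to attack \cref{conj:positivity-structure-coefficients} by extending the combinatorial machinery developed in \cref{3333} and \cref{4444} from the top-degree setting to all degrees. The first ingredient is the precise relation between structure constants $\g{\mu}$ and connection coefficients $c^\mu_{\pi,\sigma}$. Substituting \cref{def:jack-character-classical} into the identity $\theta^{(\alpha)}_\pi \cdot \theta^{(\alpha)}_\sigma = \sum_{\mu\vdash n} c^\mu_{\pi,\sigma}\,\theta^{(\alpha)}_\mu$ from \cref{Intro2} and tracking the normalization factors should rewrite each $\g{\mu}$ as a $\Laurent$-linear combination of $c^\nu_{\pi\cup 1^a,\sigma\cup 1^b}$ for partitions $\nu$ of the ambient size. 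The key numerical identity $\beta = A\delta$, which follows at once from $\beta=\alpha-1=A^2-1$ and $\delta = A-A^{-1}$, then converts polynomiality in $\beta$ to polynomiality in $\delta$, and, crucially, non-negative integrality of coefficients in $\beta$ to the same in $\delta$ provided the rescaling factors $A^{|\pi|+\ell(\pi)}$ in \cref{def:jack-character-classical} balance out.

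Second, I would seek a combinatorial formula of the shape
\begin{equation*}
\g{\mu}(\delta) \;=\; \sum_{M \in \mathcal{M}^\mu_{\pi,\sigma}} \delta^{\eta(M)},
\end{equation*}
where $\mathcal{M}^\mu_{\pi,\sigma}$ is a suitable class of bipartite rooted maps or lists of maps (refining the sets $\Mlp$ of \cref{extension1}) and $\eta$ is a measure of non-orientability in the sense of \cref{measure} and \cref{measure1}. The natural test cases are the specializations $\alpha=1$ (Schur, so $\delta=0$) which must recover the classical structure constants of the centre of $\mathbb{C}[S_n]$ counting only oriented maps, and the zonal case $\alpha=2$, which must reproduce the Hecke algebra connection coefficients of $(S_{2n},H_n)$. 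Both specializations already admit well-known map-theoretic enumerations, and matching them coefficient-by-coefficient would fix the generating set $\mathcal{M}^\mu_{\pi,\sigma}$.

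Third, to promote the top-degree information of \cref{C-main result} and \cref{hak} into a full proof I would argue by downward induction on $\deg_\delta$: subtract the top-degree contribution, which by \cref{hak} and the bijection behind \cref{wniosek} is already an honest polynomial in $\delta$ with non-negative integer coefficients, then recurse on the remainder, whose degree has strictly dropped. The main obstacle is exactly this recursion step. The degree bound \eqref{estimation} is a minimum over three quantities obtained by cyclically permuting the roles of $\pi,\sigma,\mu$, so the map-theoretic statistic $\eta$, which necessarily singles out one partition as the face-type, must respect this three-fold symmetry. In practical terms one needs three a priori distinct combinatorial interpretations to agree coefficient-by-coefficient; in top degree this agreement is the content of \cref{wniosek} and its bipartite counterpart, but away from the top the cancellations among twisted edges, handles, and borders appearing in \cref{measure} obstruct a direct inductive argument. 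Overcoming this, perhaps by finding a canonical measure of non-orientability (rather than the non-unique family in \cref{measure}) that is simultaneously compatible with all three choices of face-type, is the conceptual crux and the reason \cref{conj:positivity-structure-coefficients} has remained open.
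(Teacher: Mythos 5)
This statement is an open conjecture (attributed to \cite[Conjecture 0.1]{Sniady2016}); the paper neither claims nor contains a proof of it, and only establishes the top-degree coefficient of $g^\mu_{\pi,\sigma}$ (\cref{main result}). Your text is a research programme rather than a proof: step two begins ``I would seek a combinatorial formula,'' and your final sentence concedes that the crux --- a measure of non-orientability compatible with all three choices of face-type --- is unresolved. Nothing in the proposal actually establishes polynomiality with non-negative integer coefficients, so as a proof attempt it has a gap that is essentially the entire statement.

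Two of the concrete steps are also flawed as stated. First, the reduction to $c^\mu_{\pi,\sigma}$ does not help: non-negativity of the $\beta$-coefficients of $c^\mu_{\pi,\sigma}$ is itself the open Matchings-Jack Conjecture, so you would be assuming one open positivity conjecture to deduce another; moreover the actual relation \cref{connection} between the two families is a weighted sum over several structure constants $g^{\tilde\mu\cup 1^i}_{\tilde\pi,\tilde\sigma}$ with a prefactor $\sqrt{\alpha}^{\,d(\pi,\sigma;\mu)}$, and the gradings $\deg_\beta$ and $\deg_\delta$ do not match term by term (the paper only extracts the single top coefficient in \cref{5.67}); the identity $\beta=A\delta$ turns a polynomial in $\beta$ into a Laurent polynomial in $A$, not into a polynomial in $\delta$, so positivity does not transfer the way you claim. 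Second, the downward induction of step three stalls immediately: after subtracting the top-degree term supplied by \cref{hak}, the paper provides no combinatorial or even polynomial description of the remainder in lower degrees, so there is nothing to recurse on.
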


\subsection{The top-degree part of structure constants.}
We present an explicit formula for the top-degree part of structure constants of Jack characters. 

Let us recall that we present an oriented map as a graph on the plane with a fixed cyclic order of outgoing edges together with a choice of the root, see \cref{Orientedmap}. By convention we fixed the counter-clockwise orientation around vertices or, equivalently, the clockwise orientation of the page, see \cref{Orientedmap}. Similarly, we will present a $\mu$-collections of maps.  

Let us recall that $\MOp$ denotes the set of all $\mu$-lists of bipartite rooted and oriented maps which satisfy
\begin{equation*}
 \Lambda_\mathcal{W} (M ) = \pi  \quad\quad\text{and}\quad\quad
\Lambda_\mathcal{B} (M ) = \sigma ,
\end{equation*}
\noindent
see \cref{main-definition}. 

\begin{figure}
\centering
\begin{subfigure}[b]{0.999\textwidth}
\center
\begin{tikzpicture}[scale=0.25]

\draw[gray, thick,decoration={markings,mark=at position 0.3  with {\arrow{>}}},postaction={decorate},rounded corners=7pt]  (10,10) --  (11,11) -- (13,11) -- (14,10) node [midway, above, sloped] (TextNode) {};
\draw[gray, thick,rounded corners=7pt]  (10,10) --  (11,9) -- (13,9) -- (14,10) node [midway, below, sloped] (TextNode) {};
\draw[gray, thick,rounded corners=7pt]  (10,10)  -- (14,10) node [midway, below, sloped] (TextNode) {};

\draw[gray, thick,decoration={markings,mark=at position 0.3  with {\arrow{>}}},postaction={decorate},rounded corners=7pt]  (10,6) --  (11,7) -- (13,7) -- (14,6) node [midway, above, sloped] (TextNode) {};
\draw[gray, thick,rounded corners=7pt] (10,6) --  (11,5) -- (13,5) -- (14,6) node [midway, below, sloped] (TextNode) {};
\draw[gray, thick,rounded corners=7pt] (12,6)  -- (14,6) node [midway, below, sloped] (TextNode) {};

\filldraw[black] (10,10) circle (8pt) node[anchor=east] {$1$};
\filldraw[black] (10,6) circle (8pt) node[anchor=east] {$2$};
\filldraw[black] (12,6) circle (8pt) node[anchor=east] {};

\fill[white] (14,10) circle (8pt) node[anchor=east] {};
\draw[black] (14,10) circle (8pt) node[anchor=west] {};
\fill[white] (14,6) circle (8pt) node[anchor=east] {};
\draw[black] (14,6) circle (8pt) node[anchor=west] {};

\begin{scope}[shift={(8,0)}]
\draw[gray, thick,decoration={markings,mark=at position 0.3  with {\arrow{>}}},postaction={decorate},rounded corners=7pt]  (10,10) --  (11,11) -- (13,11) -- (14,10) node [midway, above, sloped] (TextNode) {};
\draw[gray, thick,rounded corners=7pt]  (10,10) --  (11,9) -- (13,9) -- (14,10) node [midway, below, sloped] (TextNode) {};
\draw[gray, thick,rounded corners=7pt]  (10,10)  -- (14,10) node [midway, below, sloped] (TextNode) {};

\draw[gray, thick,rounded corners=7pt] (10,6) --  (11,7) -- (13,7) -- (14,6) node [midway, above, sloped] (TextNode) {};
\draw[gray, thick,decoration={markings,mark=at position 0.3  with {\arrow{>}}},postaction={decorate},rounded corners=7pt]  (10,6) --  (11,5) -- (13,5) -- (14,6) node [midway, below, sloped] (TextNode) {};
\draw[gray, thick,rounded corners=7pt] (12,6)  -- (14,6) node [midway, below, sloped] (TextNode) {};

\filldraw[black] (10,10) circle (8pt) node[anchor=east] {$1$};
\filldraw[black] (10,6) circle (8pt) node[anchor=east] {$2$};
\filldraw[black] (12,6) circle (8pt) node[anchor=east] {};

\fill[white] (14,10) circle (8pt) node[anchor=east] {};
\draw[black] (14,10) circle (8pt) node[anchor=west] {};
\fill[white] (14,6) circle (8pt) node[anchor=east] {};
\draw[black] (14,6) circle (8pt) node[anchor=west] {};
\end{scope}

\begin{scope}[shift={(16,0)}]
\draw[gray, thick,decoration={markings,mark=at position 0.3  with {\arrow{>}}},postaction={decorate},rounded corners=7pt]  (10,10) --  (11,11) -- (13,11) -- (14,10) node [midway, above, sloped] (TextNode) {};
\draw[gray, thick,rounded corners=7pt]  (10,10) --  (11,9) -- (13,9) -- (14,10) node [midway, below, sloped] (TextNode) {};
\draw[gray, thick,rounded corners=7pt]  (10,10)  -- (14,10) node [midway, below, sloped] (TextNode) {};

\draw[gray, thick,rounded corners=7pt]  (10,6) --  (11,7) -- (13,7) -- (14,6) node [midway, above, sloped] (TextNode) {};
\draw[gray, thick,rounded corners=7pt] (10,6) --  (11,5) -- (13,5) -- (14,6) node [midway, below, sloped] (TextNode) {};
\draw[gray, thick,decoration={markings,mark=at position 0.5  with {\arrow{>}}},postaction={decorate},rounded corners=7pt]  (12,6)  -- (14,6) node [midway, below, sloped] (TextNode) {};

\filldraw[black] (10,10) circle (8pt) node[anchor=east] {$1$};
\filldraw[black] (10,6) circle (8pt) node[anchor=east] {$2$};
\filldraw[black] (12,6) circle (8pt) node[anchor=east] {};

\fill[white] (14,10) circle (8pt) node[anchor=east] {};
\draw[black] (14,10) circle (8pt) node[anchor=west] {};
\fill[white] (14,6) circle (8pt) node[anchor=east] {};
\draw[black] (14,6) circle (8pt) node[anchor=west] {};
\end{scope}

\begin{scope}[shift={(27,0)}]
\draw[gray, thick,decoration={markings,mark=at position 0.3  with {\arrow{>}}},postaction={decorate},rounded corners=7pt]  (10,10) --  (11,11) -- (13,11) -- (14,10) node [midway, above, sloped] (TextNode) {};
\draw[gray, thick,rounded corners=7pt]  (10,10) --  (11,10.3) -- (13,9) -- (14,10) node [midway, below, sloped] (TextNode) {};
\fill[white] (12,9.6) circle (8pt) node[anchor=east] {};
\draw[gray, thick,rounded corners=7pt]  (10,10) --  (11,9) -- (13,10.3) -- (14,10) node [midway, below, sloped] (TextNode) {};

\draw[gray, thick,decoration={markings,mark=at position 0.3  with {\arrow{>}}},postaction={decorate},rounded corners=7pt]  (10,6) --  (11,7) -- (13,7) -- (14,6) node [midway, above, sloped] (TextNode) {};
\draw[gray, thick,rounded corners=7pt] (10,6) --  (11,5) -- (13,5) -- (14,6) node [midway, below, sloped] (TextNode) {};
\draw[gray, thick,rounded corners=7pt] (12,6)  -- (14,6) node [midway, below, sloped] (TextNode) {};

\filldraw[black] (10,10) circle (8pt) node[anchor=east] {$1$};
\filldraw[black] (10,6) circle (8pt) node[anchor=east] {$2$};
\filldraw[black] (12,6) circle (8pt) node[anchor=east] {};

\fill[white] (14,10) circle (8pt) node[anchor=east] {};
\draw[black] (14,10) circle (8pt) node[anchor=west] {};
\fill[white] (14,6) circle (8pt) node[anchor=east] {};
\draw[black] (14,6) circle (8pt) node[anchor=west] {};
\end{scope}

\begin{scope}[shift={(35,0)}]
\draw[gray, thick,decoration={markings,mark=at position 0.3  with {\arrow{>}}},postaction={decorate},rounded corners=7pt]  (10,10) --  (11,11) -- (13,11) -- (14,10) node [midway, above, sloped] (TextNode) {};
\draw[gray, thick,rounded corners=7pt]  (10,10) --  (11,10.3) -- (13,9) -- (14,10) node [midway, below, sloped] (TextNode) {};
\fill[white] (12,9.6) circle (8pt) node[anchor=east] {};
\draw[gray, thick,rounded corners=7pt]  (10,10) --  (11,9) -- (13,10.3) -- (14,10) node [midway, below, sloped] (TextNode) {};

\draw[gray, thick,rounded corners=7pt] (10,6) --  (11,7) -- (13,7) -- (14,6) node [midway, above, sloped] (TextNode) {};
\draw[gray, thick,decoration={markings,mark=at position 0.3  with {\arrow{>}}},postaction={decorate},rounded corners=7pt]  (10,6) --  (11,5) -- (13,5) -- (14,6) node [midway, below, sloped] (TextNode) {};
\draw[gray, thick,rounded corners=7pt] (12,6)  -- (14,6) node [midway, below, sloped] (TextNode) {};

\filldraw[black] (10,10) circle (8pt) node[anchor=east] {$1$};
\filldraw[black] (10,6) circle (8pt) node[anchor=east] {$2$};
\filldraw[black] (12,6) circle (8pt) node[anchor=east] {};

\fill[white] (14,10) circle (8pt) node[anchor=east] {};
\draw[black] (14,10) circle (8pt) node[anchor=west] {};
\fill[white] (14,6) circle (8pt) node[anchor=east] {};
\draw[black] (14,6) circle (8pt) node[anchor=west] {};
\end{scope}

\begin{scope}[shift={(43,0)}]
\draw[gray, thick,decoration={markings,mark=at position 0.3  with {\arrow{>}}},postaction={decorate},rounded corners=7pt]  (10,10) --  (11,11) -- (13,11) -- (14,10) node [midway, above, sloped] (TextNode) {};
\draw[gray, thick,rounded corners=7pt]  (10,10) --  (11,10.3) -- (13,9) -- (14,10) node [midway, below, sloped] (TextNode) {};
\fill[white] (12,9.6) circle (8pt) node[anchor=east] {};
\draw[gray, thick,rounded corners=7pt]  (10,10) --  (11,9) -- (13,10.3) -- (14,10) node [midway, below, sloped] (TextNode) {};

\draw[gray, thick,rounded corners=7pt]  (10,6) --  (11,7) -- (13,7) -- (14,6) node [midway, above, sloped] (TextNode) {};
\draw[gray, thick,rounded corners=7pt] (10,6) --  (11,5) -- (13,5) -- (14,6) node [midway, below, sloped] (TextNode) {};
\draw[gray, thick,decoration={markings,mark=at position 0.5  with {\arrow{>}}},postaction={decorate},rounded corners=7pt]  (12,6)  -- (14,6) node [midway, below, sloped] (TextNode) {};

\filldraw[black] (10,10) circle (8pt) node[anchor=east] {$1$};
\filldraw[black] (10,6) circle (8pt) node[anchor=east] {$2$};
\filldraw[black] (12,6) circle (8pt) node[anchor=east] {};

\fill[white] (14,10) circle (8pt) node[anchor=east] {};
\draw[black] (14,10) circle (8pt) node[anchor=west] {};
\fill[white] (14,6) circle (8pt) node[anchor=east] {};
\draw[black] (14,6) circle (8pt) node[anchor=west] {};
\end{scope}

\begin{scope}[shift={(0,-15)}]
\draw[gray, thick,decoration={markings,mark=at position 0.3  with {\arrow{>}}},postaction={decorate},rounded corners=7pt]  (10,10) --  (11,11) -- (13,11) -- (14,10) node [midway, above, sloped] (TextNode) {};
\draw[gray, thick,rounded corners=7pt]  (10,10) --  (11,9) -- (13,9) -- (14,10) node [midway, below, sloped] (TextNode) {};
\draw[gray, thick,rounded corners=7pt]  (10,10)  -- (14,10) node [midway, below, sloped] (TextNode) {};
\filldraw[black] (10,10) circle (8pt) node[anchor=east] {$2$};
\fill[white] (14,10) circle (8pt) node[anchor=east] {};
\draw[black] (14,10) circle (8pt) node[anchor=west] {};
\end{scope}

\begin{scope}[shift={(0,-7)}]
\draw[gray, thick,decoration={markings,mark=at position 0.3  with {\arrow{>}}},postaction={decorate},rounded corners=7pt]  (10,6) --  (11,7) -- (13,7) -- (14,6) node [midway, above, sloped] (TextNode) {};
\draw[gray, thick,rounded corners=7pt] (10,6) --  (11,5) -- (13,5) -- (14,6) node [midway, below, sloped] (TextNode) {};
\draw[gray, thick,rounded corners=7pt] (12,6)  -- (14,6) node [midway, below, sloped] (TextNode) {};
\filldraw[black] (10,6) circle (8pt) node[anchor=east] {$1$};
\filldraw[black] (12,6) circle (8pt) node[anchor=east] {};
\fill[white] (14,6) circle (8pt) node[anchor=east] {};
\draw[black] (14,6) circle (8pt) node[anchor=west] {};
\end{scope}

\begin{scope}[shift={(8,-15)}]
\draw[gray, thick,decoration={markings,mark=at position 0.3  with {\arrow{>}}},postaction={decorate},rounded corners=7pt]  (10,10) --  (11,11) -- (13,11) -- (14,10) node [midway, above, sloped] (TextNode) {};
\draw[gray, thick,rounded corners=7pt]  (10,10) --  (11,9) -- (13,9) -- (14,10) node [midway, below, sloped] (TextNode) {};
\draw[gray, thick,rounded corners=7pt]  (10,10)  -- (14,10) node [midway, below, sloped] (TextNode) {};
\filldraw[black] (10,10) circle (8pt) node[anchor=east] {$2$};
\fill[white] (14,10) circle (8pt) node[anchor=east] {};
\draw[black] (14,10) circle (8pt) node[anchor=west] {};
\end{scope}

\begin{scope}[shift={(8,-7)}]
\draw[gray, thick,rounded corners=7pt] (10,6) --  (11,7) -- (13,7) -- (14,6) node [midway, above, sloped] (TextNode) {};
\draw[gray, thick,decoration={markings,mark=at position 0.3  with {\arrow{>}}},postaction={decorate},rounded corners=7pt]  (10,6) --  (11,5) -- (13,5) -- (14,6) node [midway, below, sloped] (TextNode) {};
\draw[gray, thick,rounded corners=7pt] (12,6)  -- (14,6) node [midway, below, sloped] (TextNode) {};
\filldraw[black] (10,6) circle (8pt) node[anchor=east] {$1$};
\filldraw[black] (12,6) circle (8pt) node[anchor=east] {};
\fill[white] (14,6) circle (8pt) node[anchor=east] {};
\draw[black] (14,6) circle (8pt) node[anchor=west] {};
\end{scope}

\begin{scope}[shift={(16,-15)}]
\draw[gray, thick,decoration={markings,mark=at position 0.3  with {\arrow{>}}},postaction={decorate},rounded corners=7pt]  (10,10) --  (11,11) -- (13,11) -- (14,10) node [midway, above, sloped] (TextNode) {};
\draw[gray, thick,rounded corners=7pt]  (10,10) --  (11,9) -- (13,9) -- (14,10) node [midway, below, sloped] (TextNode) {};
\draw[gray, thick,rounded corners=7pt]  (10,10)  -- (14,10) node [midway, below, sloped] (TextNode) {};
\filldraw[black] (10,10) circle (8pt) node[anchor=east] {$2$};
\fill[white] (14,10) circle (8pt) node[anchor=east] {};
\draw[black] (14,10) circle (8pt) node[anchor=west] {};
\end{scope}

\begin{scope}[shift={(16,-7)}]
\draw[gray, thick,rounded corners=7pt]  (10,6) --  (11,7) -- (13,7) -- (14,6) node [midway, above, sloped] (TextNode) {};
\draw[gray, thick,rounded corners=7pt] (10,6) --  (11,5) -- (13,5) -- (14,6) node [midway, below, sloped] (TextNode) {};
\draw[gray, thick,decoration={markings,mark=at position 0.5  with {\arrow{>}}},postaction={decorate},rounded corners=7pt]  (12,6)  -- (14,6) node [midway, below, sloped] (TextNode) {};
\filldraw[black] (10,6) circle (8pt) node[anchor=east] {$1$};
\filldraw[black] (12,6) circle (8pt) node[anchor=east] {};
\fill[white] (14,6) circle (8pt) node[anchor=east] {};
\draw[black] (14,6) circle (8pt) node[anchor=west] {};
\end{scope}

\begin{scope}[shift={(27,-15)}]
\draw[gray, thick,decoration={markings,mark=at position 0.3  with {\arrow{>}}},postaction={decorate},rounded corners=7pt]  (10,10) --  (11,11) -- (13,11) -- (14,10) node [midway, above, sloped] (TextNode) {};
\draw[gray, thick,rounded corners=7pt]  (10,10) --  (11,10.3) -- (13,9) -- (14,10) node [midway, below, sloped] (TextNode) {};
\fill[white] (12,9.6) circle (8pt) node[anchor=east] {};
\draw[gray, thick,rounded corners=7pt]  (10,10) --  (11,9) -- (13,10.3) -- (14,10) node [midway, below, sloped] (TextNode) {};
\filldraw[black] (10,10) circle (8pt) node[anchor=east] {$2$};
\fill[white] (14,10) circle (8pt) node[anchor=east] {};
\draw[black] (14,10) circle (8pt) node[anchor=west] {};
\end{scope}

\begin{scope}[shift={(27,-7)}]
\draw[gray, thick,decoration={markings,mark=at position 0.3  with {\arrow{>}}},postaction={decorate},rounded corners=7pt]  (10,6) --  (11,7) -- (13,7) -- (14,6) node [midway, above, sloped] (TextNode) {};
\draw[gray, thick,rounded corners=7pt] (10,6) --  (11,5) -- (13,5) -- (14,6) node [midway, below, sloped] (TextNode) {};
\draw[gray, thick,rounded corners=7pt] (12,6)  -- (14,6) node [midway, below, sloped] (TextNode) {};
\filldraw[black] (10,6) circle (8pt) node[anchor=east] {$1$};
\filldraw[black] (12,6) circle (8pt) node[anchor=east] {};
\fill[white] (14,6) circle (8pt) node[anchor=east] {};
\draw[black] (14,6) circle (8pt) node[anchor=west] {};
\end{scope}

\begin{scope}[shift={(35,-15)}]
\draw[gray, thick,decoration={markings,mark=at position 0.3  with {\arrow{>}}},postaction={decorate},rounded corners=7pt]  (10,10) --  (11,11) -- (13,11) -- (14,10) node [midway, above, sloped] (TextNode) {};
\draw[gray, thick,rounded corners=7pt]  (10,10) --  (11,10.3) -- (13,9) -- (14,10) node [midway, below, sloped] (TextNode) {};
\fill[white] (12,9.6) circle (8pt) node[anchor=east] {};
\draw[gray, thick,rounded corners=7pt]  (10,10) --  (11,9) -- (13,10.3) -- (14,10) node [midway, below, sloped] (TextNode) {};
\filldraw[black] (10,10) circle (8pt) node[anchor=east] {$2$};
\fill[white] (14,10) circle (8pt) node[anchor=east] {};
\draw[black] (14,10) circle (8pt) node[anchor=west] {};
\end{scope}

\begin{scope}[shift={(35,-7)}]
\draw[gray, thick,rounded corners=7pt] (10,6) --  (11,7) -- (13,7) -- (14,6) node [midway, above, sloped] (TextNode) {};
\draw[gray, thick,decoration={markings,mark=at position 0.3  with {\arrow{>}}},postaction={decorate},rounded corners=7pt]  (10,6) --  (11,5) -- (13,5) -- (14,6) node [midway, below, sloped] (TextNode) {};
\draw[gray, thick,rounded corners=7pt] (12,6)  -- (14,6) node [midway, below, sloped] (TextNode) {};
\filldraw[black] (10,6) circle (8pt) node[anchor=east] {$1$};
\filldraw[black] (12,6) circle (8pt) node[anchor=east] {};
\fill[white] (14,6) circle (8pt) node[anchor=east] {};
\draw[black] (14,6) circle (8pt) node[anchor=west] {};
\end{scope}

\begin{scope}[shift={(43,-15)}]
\draw[gray, thick,decoration={markings,mark=at position 0.3  with {\arrow{>}}},postaction={decorate},rounded corners=7pt]  (10,10) --  (11,11) -- (13,11) -- (14,10) node [midway, above, sloped] (TextNode) {};
\draw[gray, thick,rounded corners=7pt]  (10,10) --  (11,10.3) -- (13,9) -- (14,10) node [midway, below, sloped] (TextNode) {};
\fill[white] (12,9.6) circle (8pt) node[anchor=east] {};
\draw[gray, thick,rounded corners=7pt]  (10,10) --  (11,9) -- (13,10.3) -- (14,10) node [midway, below, sloped] (TextNode) {};
\filldraw[black] (10,10) circle (8pt) node[anchor=east] {$2$};
\fill[white] (14,10) circle (8pt) node[anchor=east] {};
\draw[black] (14,10) circle (8pt) node[anchor=west] {};
\end{scope}

\begin{scope}[shift={(43,-7)}]
\draw[gray, thick,rounded corners=7pt]  (10,6) --  (11,7) -- (13,7) -- (14,6) node [midway, above, sloped] (TextNode) {};
\draw[gray, thick,rounded corners=7pt] (10,6) --  (11,5) -- (13,5) -- (14,6) node [midway, below, sloped] (TextNode) {};
\draw[gray, thick,decoration={markings,mark=at position 0.5  with {\arrow{>}}},postaction={decorate},rounded corners=7pt]  (12,6)  -- (14,6) node [midway, below, sloped] (TextNode) {};
\filldraw[black] (10,6) circle (8pt) node[anchor=east] {$1$};
\filldraw[black] (12,6) circle (8pt) node[anchor=east] {};
\fill[white] (14,6) circle (8pt) node[anchor=east] {};
\draw[black] (14,6) circle (8pt) node[anchor=west] {};
\end{scope}

\end{tikzpicture}
\caption{All lists of maps in the set $ \MOpjj$ for partitions $\pi =(3,3), \sigma =(3,2)$, and $\mu = (3,3)$. Those lists of maps consist of two connected components which are numbered by $1$ and $2$, each has $3$ edges. The vertex structure is given by $\pi$ and $\sigma$.}
\label{fig90}
\end{subfigure}
\centering

\begin{subfigure}[b]{0.6\textwidth}
\center
\begin{tikzpicture}[scale=0.26]

\draw[gray, thick,decoration={markings,mark=at position 0.3  with {\arrow{>}}},postaction={decorate},rounded corners=7pt]  (10,10) --  (11,11) -- (13,11) -- (14,10) node [midway, above, sloped] (TextNode) {};
\draw[gray, thick,rounded corners=7pt]  (10,10) --  (11,9) -- (13,9) -- (14,10) node [midway, below, sloped] (TextNode) {};
\draw[gray, thick,rounded corners=7pt]  (10,10)  -- (14,10) node [midway, below, sloped] (TextNode) {};

\filldraw[black] (10,10) circle (8pt) node[anchor=east] {};
\fill[white] (14,10) circle (8pt) node[anchor=east] {};
\draw[black] (14,10) circle (8pt) node[anchor=west] {};

\draw[gray, thick,decoration={markings,mark=at position 0.3  with {\arrow{>}}},postaction={decorate},rounded corners=7pt]  (20,10) --  (21,11) -- (23,11) -- (24,10) node [midway, above, sloped] (TextNode) {};
\draw[gray, thick,rounded corners=7pt]  (20,10) --  (21,10.3) -- (23,9) -- (24,10) node [midway, below, sloped] (TextNode) {};
\fill[white] (22,9.6) circle (8pt) node[anchor=east] {};
\draw[gray, thick,rounded corners=7pt]  (20,10) --  (21,9) -- (23,10.3) -- (24,10) node [midway, below, sloped] (TextNode) {};

\filldraw[black] (20,10) circle (8pt) node[anchor=east] {};
\fill[white] (24,10) circle (8pt) node[anchor=east] {};
\draw[black] (24,10) circle (8pt) node[anchor=west] {};
\end{tikzpicture}
\caption{Maps from the set $\widetilde{ M}_{(3),(3)}^{\bullet ;(3)}$. Each of them can be rooted in the unique way.}
\label{fig91}
\end{subfigure}
\centering

\begin{subfigure}[b]{0.6\textwidth}
\center
\begin{tikzpicture}[scale=0.26]
\begin{scope}[shift={(-25,-11)}]
\draw[gray, thick,decoration={markings,mark=at position 0.3  with {\arrow{>}}},postaction={decorate},rounded corners=7pt]  (32,10) --  (33,11) -- (35,11) -- (36,10) node [midway, above, sloped] (TextNode) {};
\draw[gray, thick,rounded corners=7pt] (32,10) --  (33,9) -- (35,9) -- (36,10) node [midway, below, sloped] (TextNode) {};
\draw[gray, thick,rounded corners=7pt] (34,10)  -- (36,10) node [midway, below, sloped] (TextNode) {};

\filldraw[black] (32,10) circle (8pt) node[anchor=east] {};
\filldraw[black] (34,10) circle (8pt) node[anchor=east] {};
\fill[white] (36,10) circle (8pt) node[anchor=east] {};
\draw[black] (36,10) circle (8pt) node[anchor=west] {};

\draw[gray, thick,rounded corners=7pt] (40,10) --  (41,11) -- (43,11) -- (44,10) node [midway, above, sloped] (TextNode) {};
\draw[gray, thick,decoration={markings,mark=at position 0.3  with {\arrow{>}}},postaction={decorate},rounded corners=7pt]  (40,10) --  (41,9) -- (43,9) -- (44,10) node [midway, below, sloped] (TextNode) {};
\draw[gray, thick,rounded corners=7pt] (42,10)  -- (44,10) node [midway, below, sloped] (TextNode) {};

\filldraw[black] (40,10) circle (8pt) node[anchor=east] {};
\filldraw[black] (42,10) circle (8pt) node[anchor=east] {};
\fill[white] (44,10) circle (8pt) node[anchor=east] {};
\draw[black] (44,10) circle (8pt) node[anchor=west] {};

\draw[gray, thick,rounded corners=7pt] (48,10) --  (49,11) -- (51,11) -- (52,10) node [midway, above, sloped] (TextNode) {};
\draw[gray, thick,rounded corners=7pt] (48,10) --  (49,9) -- (51,9) -- (52,10) node [midway, below, sloped] (TextNode) {};
\draw[gray, thick,decoration={markings,mark=at position 0.5  with {\arrow{>}}},postaction={decorate},rounded corners=7pt]  (50,10)  -- (52,10) node [midway, below, sloped] (TextNode) {};

\filldraw[black] (48,10) circle (8pt) node[anchor=east] {};
\filldraw[black] (50,10) circle (8pt) node[anchor=east] {};
\fill[white] (52,10) circle (8pt) node[anchor=east] {};
\draw[black] (52,10) circle (8pt) node[anchor=west] {};
\end{scope}

\end{tikzpicture}
\caption{The only map from the set $\widetilde{M}_{(3),(2,1)}^{\bullet;(3)}$. It could be rooted in three different ways.} 
\label{fig92}
\end{subfigure}

\caption{There are twelve lists of maps in a set $ \MOpjj$ for partitions $\pi =(3,3), \sigma =(3,2)$, and $\mu = (3,3)$, see \cref{fig90}. 
Each of them consists of a map from $\widetilde{ M}_{(3),(3)}^{\bullet ;(3)}$ and $\widetilde{M}_{(3),(2,1)}^{\bullet;(3)}$ presented on \cref{fig91} and \cref{fig92} respectively.} 
\label{main-definition}
\end{figure}
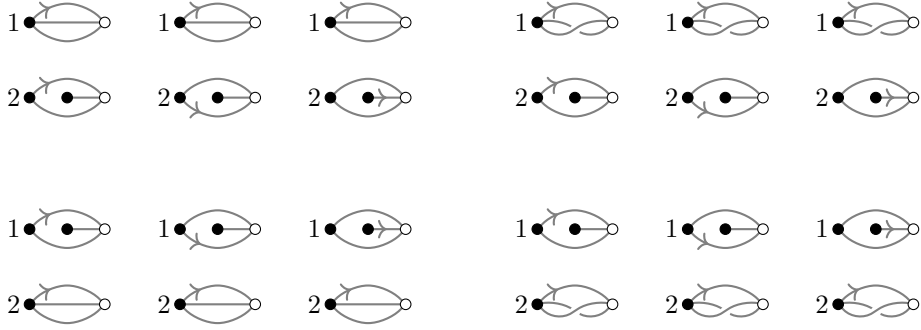
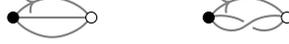
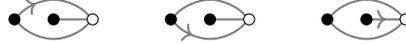

\begin{tw}
\label{main result}
For any triple of partitions $\pi,\sigma ,\mu$, the corresponding polynomial $g^\mu_{\pi,\sigma}(\delta) $ achieves one of the upper bounds on the degree given in \cref{Max}, namely
\[ d \left( \pi,\sigma ;\lambda \right) := \Big( |\pi | - \ell (\pi ) \Big) +\Big( |\sigma | - \ell (\sigma ) \Big) -
\Big( |\mu| - \ell (\mu ) \Big) 
\]
if and only if $|\mu | \geq |\pi| ,| \sigma |$, and both partitions $\pi\cup 1^{|\mu | -|\pi|}$ and $\sigma \cup 1^{|\mu | -|\sigma|}$ are sub-partitions of $\mu$, see \cref{order}. 
For such partitions, the leading coefficient of $g^\mu_{\pi,\sigma}(\delta)$ is a positive integer expressed in the following way:
\[
\left[ \delta^{d (\pi, \sigma ;\mu )}\right] g^\mu_{\pi ,\sigma}= 
C (\pi, \sigma ;\mu) \cdot \dfrac{z_\pi z_\sigma}{z_\mu} \Big\vert  \MOpj \Big\vert ,
\]
where 
\begin{multline*}
C (\pi, \sigma ;\mu ) =\sum_{k=0}^{m_1 (\mu )} 
\binom{m_1 (\mu )}{k}
\binom{m_1 (\pi)+|\mu|-|\pi| -m_1(\mu)}{m_1(\pi) -k} \cdot \\
\binom{m_1(\sigma)+|\mu|-|\sigma| - m_1(\mu)+k}{m_1 (\sigma ) -m_1(\pi) +k} ,
\end{multline*} 
which is equal to 
\begin{equation*}
\binom{m_1(\pi) + |\mu | -|\pi |}{m_1(\pi)} \binom{m_1(\sigma) + |\mu | -|\sigma |}{m_1(\sigma)}
\end{equation*}
\noindent
if $m_1 (\mu )=0$ and is equal to $1$ if $\pi,\sigma ,\mu$ are partitions of the same integer. 
\end{tw}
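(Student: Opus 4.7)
The plan is to reduce the claim to the connection coefficients studied in \cref{hak,wniosek} and then carefully collect the combinatorial factors produced by the normalization of Jack characters. By \cref{def:jack-character-classical}, for $|\lambda|=n$ the value $\Ch_\pi(\lambda)$ equals the unnormalized character $\theta^{(\alpha)}_{\pi\cup 1^{n-|\pi|}}(\lambda)$ up to an explicit factor involving $1/\sqrt{\alpha}$ and a binomial coefficient. Combining the expansion $\Ch_\pi\cdot\Ch_\sigma=\sum_\nu g^\nu_{\pi,\sigma}\Ch_\nu$ with the product formula $\theta^{(\alpha)}_\pi\cdot\theta^{(\alpha)}_\sigma=\sum_\mu c^\mu_{\pi,\sigma}\theta^{(\alpha)}_\mu$ and evaluating on partitions of a single integer $n\geq\max(|\pi|,|\sigma|,|\mu|)$, I would derive a linear relation, triangular in $m_1(\nu)$, between the values $g^\mu_{\pi,\sigma}$ and the connection coefficients $c^\nu_{\pi\cup 1^{n-|\pi|},\,\sigma\cup 1^{n-|\sigma|}}$ for $\nu\vdash n$, and invert this system explicitly.

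Next I would apply the Do\l{}\oldk{e}ga--F\'eray bound \cref{estimation} in parallel with the change of variable $\beta=\alpha-1$ versus $\delta=\sqrt{\alpha}-1/\sqrt{\alpha}$ (both variables produce the same top power of $A$ under $\alpha=A^2$). Examining the three bounds $n_i(\pi)+n_i(\sigma)-n_i(\mu)$, the $n_2$-bound equals $d(\pi,\sigma;\mu)$, while the other two exceed it by $2(\ell(\pi)+\ell(\sigma)-\ell(\mu))$ and $2(m_1(\pi)+m_1(\sigma)-m_1(\mu))$ respectively; this dovetails with the sub-partition conditions on $\pi\cup 1^{|\mu|-|\pi|}$ and $\sigma\cup 1^{|\mu|-|\sigma|}$ and yields the equivalence in the first part of the statement. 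Under the sub-partition hypothesis, the triangular relation shows that only $\nu=\mu\cup 1^{n-|\mu|}$ contributes to the top $\delta$-power of $g^\mu_{\pi,\sigma}$, so its leading coefficient reduces (up to an explicit factor of binomials and $z$-factors from the inversion) to the top-degree coefficient of $c^{\mu\cup 1^{n-|\mu|}}_{\pi\cup 1^{n-|\pi|},\,\sigma\cup 1^{n-|\sigma|}}$. Evaluating the latter via \cref{hak} as an enumeration of unhandled unicellular lists of maps and then invoking \cref{wniosek} to replace these by bipartite oriented lists recovers the factor $\lvert\MOpj\rvert$ once the trivial single-edge components are stripped away.

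The main obstacle will be the bookkeeping of all combinatorial factors: after inversion there remain binomial coefficients coming from the normalization of $\Ch_\pi$, $\Ch_\sigma$ and $\Ch_\mu$, the $z$-factors produced by \cref{strange2}, and a sum over the ways of distributing the $n-|\mu|$ trivial single-edge components between those originating in the padding of $\pi$ and those originating in the padding of $\sigma$. Indexing this sum by $k$, the number of parts equal to $1$ in $\mu$ that come from $\pi$'s padding rather than from $\pi$ itself, a Vandermonde-type convolution should collapse the total into $C(\pi,\sigma;\mu)\cdot z_\pi z_\sigma/z_\mu\cdot\lvert\MOpj\rvert$, and specialising to the degenerate regimes $m_1(\mu)=0$ and $|\pi|=|\sigma|=|\mu|$ will verify the two closed-form simplifications announced in the statement. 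Confirming that the $n$-dependence cancels throughout and identifying the resulting binomial sum with $C(\pi,\sigma;\mu)$ is the technically most delicate step.
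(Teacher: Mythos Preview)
Your proposal has a circularity problem. You plan to reduce $[\delta^{d(\pi,\sigma;\mu)}]g^\mu_{\pi,\sigma}$ to the top-degree coefficient of some $c^\nu_{\pi',\sigma'}$ and then ``evaluate the latter via \cref{hak}''. But \cref{hak} is a purely combinatorial statement about the polynomial $(H_\eta)^{\lambda;\mu}_{\pi,\sigma}$ built from the statistic $\eta$; it says nothing about the Jack-theoretic connection coefficient $c^\lambda_{\pi,\sigma}$. The identification of the top-degree part of $c^\lambda_{\pi,\sigma}$ with an enumeration of unhandled lists of maps is precisely \cref{C-main result}, and in the paper that theorem is \emph{deduced from} \cref{main result} (via \cref{5.67}), not the other way around. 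So your route presupposes what it is meant to prove. A secondary issue: comparing the three bounds $n_i(\pi)+n_i(\sigma)-n_i(\mu)$ from \cref{estimation} only tells you which upper bound is smallest, not whether the degree of $g^\mu_{\pi,\sigma}$ is actually attained; the sub-partition condition on $\pi\cup 1^{|\mu|-|\pi|}$ and $\sigma\cup 1^{|\mu|-|\sigma|}$ is strictly finer than any inequality among $\ell(\cdot)$ and $m_1(\cdot)$ and cannot be read off from those bounds alone.

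The paper's argument is entirely different and does not pass through the $c^\lambda_{\pi,\sigma}$ at all. It first proves (\cref{AtopCh}), via the shifted-symmetric characterization of Jack characters and a vanishing argument, that $\A{\topp}\Ch_\pi(\lambda)=N_{G_\pi}(\lambda)$, the number of injective embeddings of an explicit bicoloured graph $G_\pi$ into $\lambda$. A ``hands-shaking procedure'' then defines a set $P^\mu_{\pi,\sigma}$ of labelled collections of maps, and an explicit bijection on embeddings (\cref{candidate}) yields $N_{G_\pi}(\lambda)\cdot N_{G_\sigma}(\lambda)=\sum_\mu |P^\mu_{\pi,\sigma}|\,N_{G_\mu}(\lambda)$. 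Linear independence of the functions $\widehat{p_\mu}$ forces $[\delta^{d(\pi,\sigma;\mu)}]g^\mu_{\pi,\sigma}=|P^\mu_{\pi,\sigma}|$. The sub-partition condition appears as the non-emptiness criterion for $P^\mu_{\pi,\sigma}$ (\cref{strange6}), and the closed formula with $C(\pi,\sigma;\mu)$ and $|\MOpj|$ is a separate double-counting computation of $|P^\mu_{\pi,\sigma}|$ (\cref{strange5}).
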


\cref{6666} is devoted to the proof of above theorem.

\begin{ex}
Let us consider three partitions $\pi =(3,2), \sigma =(3,3)$, and $\mu = (3,3)$. In \cref{main-definition} we have shown that $ \MOpjj =12$. Using the theorem above, the $d \left( \pi,\sigma ;\mu \right)$-coefficient is equal to
\begin{equation*}
\left[ \delta^{d (\pi,\sigma ;\mu )} \right] g^\mu_{\pi,\sigma} =   \dfrac{6 \cdot 18 }{18 }
\binom{1}{0}
\binom{0}{0}
  12 =72 .
\end{equation*}
\end{ex}

\subsection{Relations between the structure constants $g^\mu_{\pi,\sigma}$ and the connection coefficients $c^\mu_{\pi,\sigma}$}
\label{rell}
It is worth mentioning that the coefficients $c^\mu_{\pi,\sigma}$ are indexed by three partitions of the same size, while the quantities $g^\mu_{\pi,\sigma}$ 
are indexed by triples of \emph{arbitrary} partitions. 
Do\l{}\oldk{e}ga and F\'eray investigated the relationship between these two families of coefficients and showed \cite[Equation (19)]{Dolega2014} that for $\mu,\pi,\sigma \vdash n$,
\begin{equation}
\label{connection}
c^\mu_{\pi,\sigma} = 
{\sqrt{\alpha}}^{d (\pi,\sigma ;\mu ) }
\dfrac{z_{\tilde{\mu}}}{z_{\tilde{\pi}} z_{\tilde{\sigma}}}
\sum_{i =0}^{m_1 (\pi )} g^{\tilde{\mu}\cup 1^i}_{\tilde{\pi},\tilde{\sigma}}
\cdot i! \binom{n - | \tilde{\mu} |}{i} ,
\end{equation}
\noindent
where $\widetilde{\pi}$ is constructed from the partition $\pi$ by deleting all units. 

Do\l{}\oldk{e}ga and F\'eray \cite{Dolega2014} proven the polynomiality and the bound on the degree of $g^\mu_{\pi,\sigma}$. Using \cref{connection} they deduced the polynomiality and the bound of the degree of connection coefficients $c^\mu_{\pi,\sigma}$.  
We establish other relations between those two families of coefficients.

\begin{cor}
\label{5.67}
For three given partitions $\mu, \nu,\lambda\vdash n$, each of the polynomials $c_{\mu, \nu}^{\lambda} (\beta )$ and $g^{\tilde{\mu}}_{\tilde{\pi},\tilde{\sigma}} (\delta )$ is of degree at most $d ( \pi,\sigma ;\mu )$, and their leading coefficients coincide up to a normalizing constant, namely
\begin{equation*}
\left[ \beta^{d ( \pi,\sigma ;\mu )} \right] c_{\mu, \nu}^{\lambda} =
\dfrac{z_{\tilde{\mu}}}{z_{\tilde{\pi}} z_{\tilde{\sigma}}} \cdot
\left[ \delta^{d ( \pi,\sigma ;\mu )} \right] g^{\tilde{\mu}}_{\tilde{\pi},\tilde{\sigma}} .
\end{equation*} 
\end{cor}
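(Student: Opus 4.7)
The plan is to extract both leading coefficients from the Do\l{}\oldk{e}ga--F\'eray identity \eqref{connection} by reinterpreting it as an identity of Laurent polynomials in the indeterminate $A$ (with $\alpha=A^2$, so $\beta=A^2-1$ and $\delta=A-A^{-1}$) and then comparing the coefficient of the top power $A^{2d}$ on both sides, where $d=d(\pi,\sigma;\mu)$.

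First I would verify the two degree bounds. The inequality $\deg_\beta c^\mu_{\pi,\sigma}\leq d$ is exactly \eqref{Max}. For the polynomial $g^{\tilde\mu}_{\tilde\pi,\tilde\sigma}$, I would apply the Do\l{}\oldk{e}ga--F\'eray estimate \eqref{estimation} with $j=2$; since removing parts equal to $1$ does not change the quantity $|\lambda|-\ell(\lambda)$, one gets $n_2(\tilde\pi)+n_2(\tilde\sigma)-n_2(\tilde\mu)=d(\pi,\sigma;\mu)$, as required.

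Next I would carefully bookkeep the three relevant degree conventions. A polynomial $c(\beta)\in\mathbb{Q}[\beta]$ of degree at most $d$, rewritten via $\beta=A^2-1$, becomes a polynomial in $A^2$ of top $A$-degree exactly $2d$, whose $A^{2d}$-coefficient equals $[\beta^d]c$. On the other hand, since $(A-A^{-1})^k$ is a Laurent polynomial of top $A$-degree $k$ with leading coefficient $1$, any polynomial $g(\delta)\in\mathbb{Q}[\delta]$ of degree at most $d$ has top $A$-degree at most $d$ with $[A^d]g(\delta)=[\delta^d]g$; consequently $A^d\,g(\delta)$ has $A$-degree at most $2d$ with $A^{2d}$-coefficient equal to $[\delta^d]g$.

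The heart of the argument is then the observation that among the summands indexed by $i$ on the right-hand side of \eqref{connection}, only the one with $i=0$ can contribute to the coefficient of $A^{2d}$. Indeed, for $i\geq 1$, applying \eqref{estimation} to $g^{\tilde\mu\cup 1^i}_{\tilde\pi,\tilde\sigma}$ with the index $j=3$ gives $\deg_\delta g^{\tilde\mu\cup 1^i}_{\tilde\pi,\tilde\sigma}\leq n_3(\tilde\pi)+n_3(\tilde\sigma)-n_3(\tilde\mu\cup 1^i)=d-i<d$, because $m_1(\tilde\pi)=m_1(\tilde\sigma)=0$ whereas $m_1(\tilde\mu\cup 1^i)=i$ (as $\tilde\mu$ contains no part equal to $1$). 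Consequently $A^d\,g^{\tilde\mu\cup 1^i}_{\tilde\pi,\tilde\sigma}(\delta)$ has $A$-degree strictly less than $2d$ for every $i\geq 1$, and extracting $[A^{2d}]$ from both sides of \eqref{connection} leaves only the $i=0$ contribution, which yields the claimed identity. The only obstacle is keeping the three degree conventions straight; no further combinatorial or algebraic input is required beyond \eqref{Max}, \eqref{estimation} and \eqref{connection}.
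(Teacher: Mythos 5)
Your proposal is correct and follows essentially the same route as the paper: both arguments hinge on applying the third bound in \cref{estimation} to $g^{\tilde{\mu}\cup 1^i}_{\tilde{\pi},\tilde{\sigma}}$ to get $\deg_\delta \leq d(\pi,\sigma;\mu)-i$, so that after rewriting \cref{connection} in the variable $A=\sqrt{\alpha}$ only the $i=0$ term survives in the coefficient of $A^{2d(\pi,\sigma;\mu)}$, which is then identified with $[\beta^{d}]c$ on the left and $[\delta^{d}]g^{\tilde{\mu}}_{\tilde{\pi},\tilde{\sigma}}$ on the right. Your extra care with the three degree conventions ($\beta$, $\delta$, $A$) is exactly the bookkeeping the paper performs implicitly.
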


\begin{proof}
Fix three partitions $\mu, \nu,\lambda\vdash n$. Observe that for each $i\geq 0$, the third estimation shown in \cref{estimation} gives us
\[ \deg_\delta g^{\tilde{\mu}\cup 1^i}_{\tilde{\pi},\tilde{\sigma}} \leq d \left(  \pi,\sigma ;\mu \right) -i . \]
Let us recall that $\delta = \sqrt{\alpha} - \dfrac{1}{\sqrt{\alpha}}$, hence the right-hand side of \cref{connection} is of $\alpha$-degree at most equal to $2  d ( \pi,\sigma ;\mu )$, and in the sum over $i$, the only contribution to the $2  d ( \pi,\sigma ;\mu )$-degree coefficient comes from  $g^{\tilde{\mu}}_{\tilde{\pi},\tilde{\sigma}}$. We have
\begin{multline*}
\left[ \sqrt{\alpha}^{2 d ( \pi,\sigma ;\mu )} \right] \Bigg(
{\sqrt{\alpha}}^{d (\pi,\sigma ;\mu ) }
\dfrac{z_{\tilde{\mu}}}{z_{\tilde{\pi}} z_{\tilde{\sigma}}}
\sum_{i =0}^{m_1 (\pi )} 
g^{\tilde{\mu}\cup 1^i}_{\tilde{\pi},\tilde{\sigma}}
\cdot i! \binom{n - | \tilde{\mu} |}{i}  \Bigg)
= \\
\dfrac{z_{\tilde{\mu}}}{z_{\tilde{\pi}} z_{\tilde{\sigma}}}
\left[ \delta^{d ( \pi,\sigma ;\mu )} \right] 
g^{\tilde{\mu}}_{\tilde{\pi},\tilde{\sigma}} .
\end{multline*}
Since $\beta =\alpha -1$, the $2 d ( \pi,\sigma ;\mu )$-degree coefficient of $ c_{\mu, \nu}^{\lambda}$ in variable $\sqrt{\alpha}$ coincides with $d ( \pi,\sigma ;\mu )$-degree coefficient in variable $\beta$. Hence \cref{connection} finishes the proof.
\end{proof}

Assuming \cref{main result} we are ready to prove the main result of this paper. 
The proof may seem intricate, it combines different facts which have been proven so far.

\begin{proof}[Proof of \cref{C-main result}]
Fix partitions $\pi,\sigma ,\lambda \vdash n$. 
We investigate the polynomial $c_{\pi, \sigma}^{\lambda} (\beta )$. 
By \cref{5.67} we have
\begin{equation*}
\left[ \beta^{d ( \pi,\sigma ;\lambda )} \right] c_{\pi, \sigma}^{\lambda} =
\dfrac{z_{\tilde{\lambda}}}{z_{\tilde{\pi}} z_{\tilde{\sigma}}} \cdot
\left[ \delta^{d ( \pi,\sigma ;\lambda )} \right] g^{\tilde{\lambda}}_{\tilde{\pi},\tilde{\sigma}} ,
\end{equation*}
\noindent
and by \cref{main result} we know that the polynomial $g^{\tilde{\lambda}}_{\tilde{\pi},\tilde{\sigma}}$ achieves the $d ( \pi,\sigma ;\lambda )$-degree part if and only if $|\tilde{\lambda} | \geq |\tilde{\pi}| ,| \tilde{\sigma} |$, $\tilde{\pi} \preceq \tilde{\lambda}$, and $\tilde{\sigma} \preceq\tilde{\lambda}$. 
Observe that this condition is equivalent to $\pi\preceq\lambda$ and $\sigma\preceq\lambda$. Hence the condition on partitions $\pi, \sigma,\lambda$ for achieving by $c_{\pi, \sigma}^{\lambda}$ the $d ( \pi,\sigma ;\lambda )$-degree.

Thus, we have
\begin{equation*}
\left[ \beta^{d ( \pi,\sigma ;\lambda )} \right] c_{\pi, \sigma}^{\lambda} 
\overset{\text{\cref{5.67}}}{=}
\dfrac{z_{\tilde{\lambda}}}{z_{\tilde{\pi}} z_{\tilde{\sigma}}} \cdot
\left[ \delta^{d ( \pi,\sigma ;\lambda )} \right] g^{\tilde{\lambda}}_{\tilde{\pi},\tilde{\sigma}} 
\overset{\text{\cref{main result}}}{=} 
\Big\vert  {\widetilde{M}}^{\bullet; \tilde{\lambda}}_{\tilde{\pi},\tilde{\sigma}} \Big\vert  .
\end{equation*}
\noindent
Since there is only one map $M_1 \in \widetilde{M}_{(1),(1)}^{(1)}$, we have
\begin{equation*}
\Big\vert  {\widetilde{M}}^{\bullet; \tilde{\lambda}}_{\tilde{\pi},\tilde{\sigma}} \Big\vert  =
\Big\vert  \MOpl \Big\vert .
\end{equation*}
\noindent 
Indeed, from any $\lambda$-list of maps $M \in \MOpl$ we can canonically create a $\tilde{\lambda}$-list of map $\widetilde{M} \in {\widetilde{M}}^{\bullet; \tilde{\lambda}}_{\tilde{\pi},\tilde{\sigma}}$ by erasing the last $| \lambda | - | \tilde{\lambda} |$ components. 
This procedure is reversible, since we can add new $M_1$ components to $\widetilde{M}$. 
Then we have
\begin{equation*}
\Big\vert  \MOpl \Big\vert =
\sum_{\nu :\nu \preceq \lambda} \Big\vert  \widetilde{M}^{\nu ; \lambda}_{\pi,\sigma}  \Big\vert 
\overset{\text{\cref{strange2}}}{=} 
\dfrac{z_\lambda}{z_\nu}
\sum_{\nu :\nu \preceq \lambda} \Big\vert  \mathcal{\widetilde{G}}^{\nu ; \lambda}_{\pi,\sigma} \Big\vert .
\end{equation*}
\noindent
Hence
\begin{equation*}
\left[ \beta^{d ( \pi,\sigma ;\lambda )} \right] c_{\pi, \sigma}^{\lambda} 
=
\dfrac{z_\lambda}{z_\nu}
\sum_{\nu :\nu \preceq \lambda} \Big\vert  \mathcal{\widetilde{G}}^{\nu ; \lambda}_{\pi,\sigma} \Big\vert .
\end{equation*}

From \cref{lemlem} we conclude that the leading coefficient of $ c_{\pi, \sigma}^{\lambda}$ overlaps with the leading coefficient of the polynomial
\begin{equation*}
\left( G_\eta \right)_{\pi,\sigma}^{\lambda ;\lambda} := 
\sum_{\delta \in \Gll} \beta^{\stat (\delta )},
\end{equation*}
\noindent
see \cref{stt}, and that both are of the same degree. 
From \cref{lemlem} we also get the second expression for the leading coefficient of the polynomial $ c_{\pi, \sigma}^{\lambda}$.
\end{proof}

\section{The top-degree part of structure constants}
\label{6666}
This section is devoted to the proof of \cref{main result}. 
Firstly, we present some basic computations leading to the exact formulas for the top-degree part of Jack characters. 
We present those formulas in terms of injective embeddings into Young diagrams. 
Secondly, we consider a particular class of collections of bipartite maps $ P^\mu_{\pi,\sigma}$ which constitute a good candidate for the 
top-degree parts of the structure constants $ g^\mu_{\pi,\sigma} $. 
Finally, we prove that those candidates 
for the top-degree part of structure constants $ g^\mu_{\pi,\sigma}$ (see \cref{candidate}) are indeed them.

\subsection{Embeddings of bicolored graphs}

A bicolored graph $G$ is a bipartite graph together with a choice of the colouring of its vertex set 
$\V$; we denote by $\V_{\bullet}$ and $\V_{\circ}$ respectively the
sets of black and white vertices of $G$.

\begin{de}
An \emph{injective embedding} $F$ of a bicolored graph $G$ to a Young diagram $\lambda$ is a function
which maps $\V_{\circ}$ to the set of columns of $\lambda$, maps $\V_{\bullet}$ to the set of rows of
$\lambda$, and maps injectively the set of edges $\mathcal{E}$ to the set of boxes of $\lambda$, see \cref{fig32}.
We also require that $F$ preserves the relation of incidence, \ie each
vertex $v \in  \V$ should be mapped to a row or a column $F(v)$ which contains the box $F(e)$, for every edge $e \in \mathcal{E}$ incident to $v$.
We denote by $N_G(\lambda )$ the number of such embeddings of $G$ into $\lambda$.
\end{de}

It is also useful to consider injective embeddings of a graph $G$ into a Young diagram $\lambda$, with the roles of black and white vertices reversed (\ie black vertices are mapped into \emph{columns}, white vertices into the \emph{rows}). We refer to such embeddings as \emph{negative injective embeddings} and denote the number of such embeddings as $\overline{N_G(\lambda )}$.

\begin{de}
\label{graph}
For any partition $\pi =\left( \pi_1 ,\ldots ,\pi_r \right) $ we define the graph $G_\pi$ as the unique bicoloured graph consisting of $r$ black vertices of degrees $\pi_1 ,\ldots ,\pi_r $ respectively and $|\pi |$ white vertices, each of degree one (see \cref{fig32}). Similarly, we define $\overline{G_\pi}$ as the unique bicoloured graph consisting of $r$ white vertices of degrees $\pi_1 ,\ldots ,\pi_r $ respectively and $|\pi |$ black vertices, each of degree one. 
\end{de}

\begin{rem}
\label{rem1}
The number $ N_{G_\pi}\left( \lambda \right)$ of injective embeddings of the graph $G_\pi$ into the Young diagram $\lambda$ is equal to the number $ \overline{N_{\overline{G_\pi}}\left( \lambda \right)}$ of negative injective embeddings of the graph $\overline{G_\pi}$ into the Young diagram $\lambda$.
\end{rem}

\begin{figure}
\centering
\begin{tikzpicture}[scale=0.8]

\draw[gray, thick] (1,4) -- (3,5.5) node [midway, above, sloped] (TextNode) {$e_1$};
\draw[gray, thick] (1,4) -- (3,4) node [midway, above, sloped] (TextNode) {$e_2$};
\draw[gray, thick] (1,4) -- (3,2.5) node [midway, above, sloped] (TextNode) {$e_3$};

\draw[gray, thick] (1,1) -- (3,1) node [midway, above, sloped] (TextNode) {$e_4$};

\filldraw[black] (1,1) circle (4pt) node[anchor=east] {$v^\bullet_2$};
\filldraw[black] (1,4) circle (4pt) node[anchor=east] {$v^\bullet_1$};

\fill[white] (3,5.5) circle (4pt) node[anchor=east] {};
\fill[white] (3,4) circle (4pt) node[anchor=west] {};
\fill[white] (3,2.5) circle (4pt) node[anchor=east] {};
\fill[white] (3,1) circle (4pt) node[anchor=west] {};

\draw[black] (3,1) circle (4pt) node[anchor=west] {$v^\circ_4$};
\draw[black] (3,2.5) circle (4pt) node[anchor=west] {$v^\circ_3$};
\draw[black] (3,4) circle (4pt) node[anchor=west] {$v^\circ_2$};
\draw[black] (3,5.5) circle (4pt) node[anchor=west] {$v^\circ_1$};

\draw (8,3) rectangle (9,4) node[gray,pos=.5] {$e_3$}; 
\draw (9,3) rectangle (10,4) node[gray,pos=.5] {$e_2$};
\draw (10,3) rectangle (11,4) node[gray,pos=.5] {};
\draw (11,3) rectangle (12,4) node[gray,pos=.5] {$e_1$};
\draw (8,4) rectangle (9,5) node[gray,pos=.5] {$e_4$};
\draw (9,4) rectangle (10,5) node[gray,pos=.5] {};
\draw (10,4) rectangle (11,5) node[gray,pos=.5] {};

\draw[<-] (8.5,2.9) -- (8.5,2) node[anchor=north] {$v^\circ_3$,$v^\circ_4$};
\draw[<-] (9.5,2.9) -- (9.5,2) node[anchor=north] {$v^\circ_2$};
\draw[<-] (11.5,2.9) -- (11.5,2) node[anchor=north] {$v^\circ_1$};

\draw[<-] (7.9,3.5) -- (7,3.5) node[anchor=east] {$v^\bullet_1$};
\draw[<-] (7.9,4.5) -- (7,4.5) node[anchor=east] {$v^\bullet_2$};

\end{tikzpicture}
\caption{The graph $G_\pi$ associated with the partition $\pi = \left( 3,1\right) $. On the right, an example of its injective embedding into the Young diagram $\lambda =\left( 4,3 \right)$.} 
\label{fig32}
\end{figure}
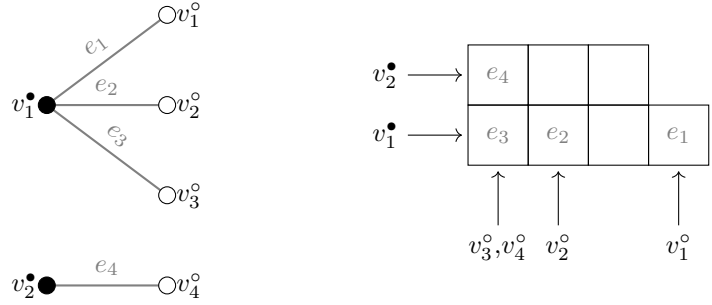

\subsection{Exact formulas for top-degree part of Jack characters}
\label{88}
Śniady proved \cite[Proposition 3.5]{Sniady2015} that each Jack character is a function on the set $\mathbb{Y}$ of Young diagrams
\begin{equation*}
   \mathbb{Y}    \ni \lambda \quad\longmapsto     \quad  \Ch_\pi (\lambda) \in \Laurentdd
\end{equation*}
\noindent
with values in the set $\Laurentdd$ of Laurent polynomials in the variable $A$ of degree at most $|\pi |-\ell (\pi )$. 
We denote by
\begin{equation*}
\A{\topp} \Ch_\pi \left( \lambda \right) := \A{|\pi |-\ell (\pi)} \Ch_\pi \left( \lambda \right) 
\end{equation*}
\noindent
the leading part of this Laurent polynomial. We shall express this quantity in terms of injective embeddings of $G_\pi$ into $\lambda$.

\begin{prop}
\label{AtopCh}
For any Young diagram $\lambda \in \mathbb{Y}$ and partition $\pi$, we have that
\begin{equation*}
\A{\topp} \Ch_\pi \left( \lambda \right)  =  N_{G_\pi}\left( \lambda \right) .
\end{equation*} 
\noindent
That is, the leading part of $\Ch_\pi (\lambda)$ is equal to the number of injective embeddings of the graph $G_\pi$ into the Young diagram $\lambda$. 
\end{prop}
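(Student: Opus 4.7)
The plan is to extract the top $A$-coefficient of $\Ch_\pi(\lambda)$ from a known combinatorial expansion of the normalized Jack character, then to match the resulting sum with the injective embedding count $N_{G_\pi}(\lambda)$. As a starting point I would invoke the explicit formula for $\Ch_\pi(\lambda)$ established by Dołęga and Féray \cite{Dolega2014}, which expresses the Jack character as a weighted sum over a finite family of rooted bipartite maps $M$ whose vertex degree-distribution matches $\pi$, each contributing a term of the form $\gamma^{\eta(M)}\,\mathcal{N}_M(\lambda)$, where $\gamma = -A + A^{-1}$ and $\mathcal{N}_M(\lambda)$ is a non-negative integer counting a certain incidence-preserving labelling of $M$ by rows, columns and boxes of $\lambda$. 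The Laurent-polynomial bound $\deg_A \Ch_\pi(\lambda) \le |\pi|-\ell(\pi)$ attributed to Śniady is a direct consequence of such an expansion.

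The second step is a degree analysis in $A$. Since $\gamma^{\eta(M)}$ contributes $A^{\eta(M)}$ to the top degree, only maps $M$ saturating $\eta(M) = |\pi|-\ell(\pi)$ can survive in $\A{\topp}\Ch_\pi(\lambda)$. Tracking the root-deletion procedure of \cref{M} in the spirit of \cref{hak} and \cref{lemlem}, these extremal maps are precisely those whose underlying graph decomposes as a disjoint union of $\ell(\pi)$ star-graphs of sizes $\pi_1,\ldots,\pi_{\ell(\pi)}$, i.e.\ as the bicoloured graph $G_\pi$ of \cref{graph}.

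On this surviving subfamily, the remaining weight $\mathcal{N}_M(\lambda)$ simplifies to the count of injective mappings of the black vertices of $G_\pi$ to rows of $\lambda$, of its white vertices to columns of $\lambda$, and of its edges injectively to boxes of $\lambda$ respecting incidences---which is exactly $N_{G_\pi}(\lambda)$. Summing then yields $\A{\topp}\Ch_\pi(\lambda) = N_{G_\pi}(\lambda)$. The main obstacle lies in the second step: verifying that the maps $M$ realizing $\eta(M)=|\pi|-\ell(\pi)$ are exactly the star-forest configurations, and that no cancellation among the contributions from different $M$ spoils the count. This reduces to a genus and Euler-characteristic argument on $M$ combined with a careful analysis of how the statistic $\eta$ changes under root-deletion.
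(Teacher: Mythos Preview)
Your approach is fundamentally different from the paper's, and it has a genuine gap at the very first step. The explicit combinatorial formula you attribute to \cite{Dolega2014}---expressing $\Ch_\pi(\lambda)$ as $\sum_M \gamma^{\eta(M)}\mathcal{N}_M(\lambda)$ over rooted bipartite maps---is not a theorem. What Do\l{}\oldk{e}ga and F\'eray prove in \cite{Dolega2014} are polynomiality and degree bounds for Jack characters and their structure constants; a positive expansion of $\Ch_\pi$ over maps weighted by a non-orientability statistic is essentially a form of the open Stanley--Lassalle conjecture for Jack characters. If such a formula were available, much of the present paper (and indeed the $b$-Conjecture) would be far closer to resolved. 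Even in the conjectural formulations, the maps involved carry \emph{face-type} $\pi$ rather than vertex-degree distribution $\pi$, and the weight is not literally $\gamma^{\eta(M)}$ for the La Croix statistic of \cref{measure}; so your second and third steps---identifying the extremal maps as star-forests and matching $\mathcal{N}_M$ to $N_{G_\pi}$---would have to be reworked even if the first step were granted.

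The paper circumvents any map expansion entirely. It uses the abstract characterization of $\Ch_\pi$ as the unique $\alpha$-shifted symmetric function with homogeneous top-degree part $A^{|\pi|-\ell(\pi)}p_\pi$ and the vanishing property on Young diagrams with fewer than $|\pi|$ boxes (conditions (J1)--(J2) from \cite{Sniady2015}). The key observation is that extracting the coefficient $[A^{|\pi|-\ell(\pi)}]$ annihilates the shifts $-i/A^2$ in the shifted symmetry, so $\A{\topp}\Ch_\pi$ is a genuine \emph{symmetric} polynomial in $\lambda_1,\ldots,\lambda_N$. One then checks that $\widehat{p_\pi}(\lambda):=N_{G_\pi}(\lambda)$ is also symmetric, has the same homogeneous top part $p_\pi$, and also vanishes on small diagrams. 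The difference is therefore a symmetric polynomial of degree at most $|\pi|-1$ vanishing on enough Young diagrams to be forced to zero by a lemma of \'Sniady \cite[Lemma 7.1]{Sniady2015}.
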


\begin{ex}
Let us consider the partition $\pi =\left( 3, 1\right)$ and the Young diagram $\lambda = \left( \lambda_1 , \lambda_2 \right) $. We have
\begin{equation*}
\begin{split}
\A{\topp} \Ch_{(3,1)} \left( \lambda_1 , \lambda_2\right) =  N_{G_\pi}\left( \lambda \right) =
{\lambda_1}^{\underline{4}}_{} &+ 
{\lambda_1}^{\underline{3}}_{} \cdot {\lambda_2}^{\underline{1}}_{}  \\
&+{\lambda_1}^{\underline{1}}_{} \cdot {\lambda_2}^{\underline{3}}_{} +
{\lambda_2}^{\underline{4}}_{} .
\end{split}
\end{equation*}
\noindent
One of embeddings which contributes to $N_{G_\pi}( \lambda )$ is presented on \cref{fig32}. 
\end{ex}

Before proving \cref{AtopCh} we introduce the notion of $\alpha$-shifted symmetric functions (see more in \cite[Section 2.2]{MR2424904} or \cite[Definition 2.2]{MR3649821}) and present Jack characters in this context.

\begin{de}
\label{frt}
An $\alpha$-shifted symmetric function $F = (F_N)_{N \geq 1}$ is a sequence of polynomials $F_N$ such that
\begin{itemize}

\item for each $N \geq 1$, $F_N$ is a polynomial in $N$ variables $x_1 ,\ldots ,x_N$ with coefficients in the field of rational functions $\mathbb{Q}(\alpha)$ in some indeterminate $\alpha$ that is symmetric in the variables 
\[\xi_1 := x_1 -\dfrac{1}{\alpha} ,\quad \xi_2 := x_2 -\dfrac{2}{\alpha} ,\ldots ,\quad\xi_N := x_N -\dfrac{N}{\alpha} ,\]
\item for each $N \geq1$, $F_{N+1} (x_1 ,\ldots ,x_N,0) =F_N (x_1 ,\ldots ,x_N) $ (the stability property),
\item $\sup_{N\geq1} \deg (F_N ) <\infty$.
\end{itemize}
\noindent
The degree of a shifted-symmetric function $F$ is defined as maximum of the degrees of the corresponding
polynomials $F_N (x_1, \ldots , x_N)$.
\end{de}

Śniady and F\'eray gave some abstract characterizations of Jack characters \cite[Theorem 1.7, Theorem A.2]{Sniady2015}. 
We present the one given by F\'eray, which can be traced back to the earlier work of Knop and Sahi \cite{MR1399412}.

\begin{tw}\cite[Theorem A.2]{Sniady2015}
Let $\pi$ be a partition and $A$ be a complex number such that $-\dfrac{1}{\alpha} = -\dfrac{1}{A^2}$ is not a positive integer. There exists a unique shifted-symmetric function $F$ such that: 
\begin{itemize}

\item[(J1)] $F$ is a shifted-symmetric function of degree $| \pi |$, and its top-degree homogeneous part is equal to
\[ A^{|\pi|-\ell(\pi)}\ p_\pi(\lambda_1,\dots,\lambda_m),\]
\noindent
where $p_\pi$ is the \emph{power-sum symmetric polynomial} given by the formula
\begin{equation*}
p^{}_\pi \left( \lambda \right) = \prod_r \sum_i \lambda_i^{\pi_r} .
\end{equation*}

\item[(J2)] 
$F(\lambda) = 0$ holds for each Young diagram $\lambda$ such that $|\lambda | < | \pi |$ (the vanishing property).

\end{itemize}
 
Moreover, if $\alpha$ is a positive real number, the function $F= (F_N)_{N \geq 1}$ satisfies
$\Ch_\pi ( \lambda ) =   F_r(\lambda_1 ,\ldots ,\lambda_r )$ for each Young diagram $\lambda =(\lambda_1 ,\ldots ,\lambda_r )$.
\end{tw}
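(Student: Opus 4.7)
The plan is to split the argument into existence and uniqueness, observing that the hypothesis on $A$ will enter only in the latter half.

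For existence, I would propose $\Ch_\pi$ itself as the desired function $F$. The vanishing property (J2) is built into \cref{def:jack-character-classical}, which sets $\Ch_\pi(\lambda) = 0$ whenever $|\lambda| < |\pi|$. To upgrade this to the stated shifted-symmetric function, I would identify $\Ch_\pi$, after the substitution $\alpha = A^2$ and the change of variables $\xi_i = \lambda_i - i/\alpha$, with an explicit member of the family of shifted Jack polynomials of Knop--Sahi \cite{MR1399412}. The degree bound in (J1), shifted-symmetry, and the claim that the top-homogeneous part equals $A^{|\pi|-\ell(\pi)} p_\pi(\lambda_1, \ldots, \lambda_m)$ would then follow from the known top-degree behaviour of shifted Jack polynomials, with the $A$-power accounted for by the normalization $1/\sqrt{\alpha}^{|\pi|+\ell(\pi)}$ and the factor $z_\pi$ appearing in \cref{def:jack-character-classical}.

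For uniqueness, suppose two $\alpha$-shifted symmetric functions $F$ and $G$ both satisfy (J1) and (J2). Their difference $H := F - G$ is shifted-symmetric of degree strictly less than $|\pi|$ (the top-degree parts cancel) and vanishes on every Young diagram of size $< |\pi|$. I would then invoke the fact that the Knop--Sahi shifted Jack polynomials $(J^*_\mu)_\mu$ form a homogeneous basis of the algebra of $\alpha$-shifted symmetric functions, that $J^*_\mu$ has degree $|\mu|$, that $J^*_\mu(\lambda) = 0$ for every $\lambda \not\supseteq \mu$, and crucially that $J^*_\mu(\mu) \neq 0$. Writing $H = \sum_{|\mu| < |\pi|} c_\mu J^*_\mu$ and picking $\mu_0$ of minimal size with $c_{\mu_0} \neq 0$, the evaluation $H(\mu_0) = 0$ annihilates all terms indexed by $\nu \not\subseteq \mu_0$ (containment), while size-minimality kills the remaining $\nu$ with $|\nu| < |\mu_0|$; what is left is $c_{\mu_0} J^*_{\mu_0}(\mu_0) = 0$, which forces $c_{\mu_0} = 0$ and contradicts the choice of $\mu_0$. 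Hence $H \equiv 0$.

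The main obstacle is the nonvanishing $J^*_{\mu_0}(\mu_0) \neq 0$ that powers the triangular induction. This diagonal evaluation is an explicit product of $\alpha$-shifted hook-type quantities which can fail to be invertible precisely when $-1/\alpha$ is a positive integer; controlling (and excluding) this degenerate locus is exactly the role played by the hypothesis on $A$. Once both halves are in place, the moreover statement comes for free: for positive real $\alpha$, the function $\Ch_\pi$ satisfies (J1) and (J2) by the existence argument, so by uniqueness it must coincide with the abstract $F$, giving $\Ch_\pi(\lambda) = F_r(\lambda_1, \ldots, \lambda_r)$ for every $\lambda = (\lambda_1, \ldots, \lambda_r)$.
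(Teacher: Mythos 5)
The paper itself offers no proof of this statement: it is imported verbatim from \cite[Theorem A.2]{Sniady2015} (where the proof is due to F\'eray, building on Knop--Sahi), so there is nothing internal to compare your argument against; I can only judge it on its own terms. Your overall architecture --- existence by exhibiting $\Ch_\pi$ as a shifted-symmetric function with the right top part and vanishing property, uniqueness by expanding the difference in a basis of interpolation (shifted) Jack polynomials and exploiting their extra-vanishing and nonzero diagonal values --- is the standard one, and it does go through in the only regime this paper actually uses, namely $\alpha>0$ real.

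There are, however, two concrete soft spots. First, in the existence half, $\Ch_\pi$ is \emph{not} ``an explicit member of the family of shifted Jack polynomials'': it is a shifted-symmetric function, but not an interpolation Jack polynomial $J^*_\mu$, and the fact that the coefficients $\theta^{(\alpha)}_{\pi\cup 1^{\,|\lambda|-|\pi|}}(\lambda)$ of \cref{def:jack-character-classical} assemble into a shifted-symmetric function in the sense of \cref{frt} is itself a nontrivial theorem (Knop--Sahi \cite{MR1399412}, Lassalle, F\'eray \cite{Fer12}) that you should invoke as such rather than as an identification. Second, and more seriously, your uniqueness argument hinges on $J^*_{\mu_0}(\mu_0)\neq 0$, and you assert that this diagonal value ``can fail to be invertible precisely when $-1/\alpha$ is a positive integer.'' That is not correct: the diagonal values are hook-type products such as $\prod_{s\in\mu}\bigl(a(s)+1+l(s)/\alpha\bigr)$ (or the transposed variant), whose zero locus is the larger set of negative rationals $\alpha=-l(s)/(a(s)+1)$. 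For instance $\alpha=-2$ annihilates the diagonal value for $\mu=(1,1,1)$ even though $-1/\alpha=1/2$ is not a positive integer. So, as written, your triangular induction does not cover the full range of $\alpha$ permitted by the hypothesis of the theorem; to get the stated generality one needs a different injectivity argument (a shifted-symmetric function of degree at most $n$ vanishing on all diagrams of size less than $|\pi|$ together with the prescribed top part is determined), which is where the condition $-1/\alpha\notin\mathbb{Z}_{>0}$ genuinely enters in F\'eray's proof.
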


To keep notation short, we introduce the following symmetric function
\begin{equation*}
\widehat{p^{}_\pi}\left(\lambda\right) :=
\mathop{\mathlarger{\mathlarger{\mathlarger{\mathlarger{\mathlarger{\mathlarger{\ast}}}}}}}\limits_{r}^{}
\sum_i                     \lambda_i^{\underline{\pi_r}} ,
\end{equation*}
\noindent
where 
\[
\lambda_i^{\underline{l_i}} \ast \lambda_j^{\underline{l_j}} =
\begin{cases}
\lambda_i^{\underline{l_i +l_j}}                             &   \text{if }  i=j ,\\
\lambda_i^{\underline{l_i}} \cdot \lambda_j^{\underline{l_j}}&  \text{otherwise, } 
\end{cases}
\]
\noindent
and 
\[
\lambda^{\underline{l}} =\underbrace{\lambda \cdot(\lambda-1 )\cdots (\lambda -l+1)}_{l \text{ factors}} .
\]

\begin{proof}[Proof of \cref{AtopCh}]
Observe that
\begin{equation*}
\widehat{p^{}_\pi}\left(\lambda\right)   =  N_{G_\pi}\left( \lambda \right) . 
\end{equation*} 
\noindent
We will show that
\[
\A{\topp} \Ch_\pi \left( \lambda \right) =\widehat{p^{}_\pi}\left(\lambda\right) .
\]

Let $F$ be an $\alpha$-shifted symmetric function associated to $\pi$ by \cref{frt}. 
Let us choose a sufficiently large integer $N$, \emph{e.g.}~$N> | \pi |$. 
Let us treat the coefficients of the polynomial $F_N$ as variables. 
The equality 
\[ F_N (\lambda) =\Ch_\pi ( \lambda ) \in \Laurentdd ,
\]
which holds for each $\lambda \in \mathbb{Y}$, becomes a system of equations with coefficients in $\mathbb{N}_{\geq 0}$. 
This system is large enough to conclude that each coefficient of a polynomial $F_N$ is a linear combination of the quantities $\Ch_\pi ( \lambda ) $ over $\mathbb{Q}$, hence $F_N$ is a polynomial in $N$ variables with coefficients in $\Laurentdd$.

Notice that formally we have equality for all $\alpha >0 $. However, the rational function from $\mathbb{Q}(\alpha )$ is uniquely determined by its \emph{values} for $\alpha \geq 0$. 

Since $F_N$ is a shifted-symmetric function with coefficients in the set $\Laurentdd$, its $A$-top degree
\begin{equation*}
\A{\topp} F_N \left( \lambda_1 ,\ldots , \lambda_N \right) :=\A{|\pi |-\ell (\pi)} F_N \left( \lambda_1 ,\ldots , \lambda_N \right) 
\end{equation*}
\noindent
is a \emph{symmetric} function in the variables $\lambda_1 ,\ldots , \lambda_N $. 
Indeed, for each permutation $\sigma$ of $[N]$ we have
\begin{align*} 
\A{\topp} F_N \left(x_1,\ldots,x_N \right) &=\A{\topp} F_N \left( x_1-\dfrac{1}{A^2},\ldots,x_N -\dfrac{N}{A^2}\right) = \\
&\A{\topp} F_N \left( x_{\sigma (1)}-\dfrac{\sigma (1)}{A^2},\ldots,x_{\sigma (N)} -\dfrac{\sigma (N)}{A^2}\right) =\\
&\A{\topp} F_N \left(x_{\sigma (1)},\ldots,x_{\sigma  (N) } \right) .
\end{align*} 

Since $F_N$ is of a degree~$| \pi | $, the polynomial $\A{\topp} F_N $ has the same bound of the degree. 
Observe that the homogeneous top-degree part of $\widehat{p^{}_\pi}\left(\lambda\right) $ is equal to 
$ p^{}_\pi  \left( \lambda \right)$ and so does the homogeneous top-degree part of $\A{\topp} F_N$. Polynomials $\widehat{p^{}_\pi}\left(\lambda\right) $ and $\A{\topp} F_N$ are both symmetric, hence
\begin{equation*}
\A{\topp} F_N -\widehat{p^{}_\pi}
\end{equation*}
\noindent
is a symmetric polynomial in variables $( \lambda_1 ,\ldots ,\lambda_N )$ of a degree at most $|\pi |-1$. 

We use the following notation:
\begin{equation*}
\mathcal{Y}_0 := \left\lbrace ( \lambda_1 ,\ldots ,\lambda_N ) \in \mathbb{Z}^N : \lambda_1 \geq\ldots \geq\lambda_N\geq 0 \text{ and }\lambda_1 +\ldots +\lambda_N < |\pi | \right\rbrace .
\end{equation*}
By the vanishing property we have
\begin{equation*}
\A{\topp} F_N  \left( \lambda \right) = \A{\topp}  \Ch_\pi \left( \lambda \right) =0
\end{equation*}
\noindent
for all elements $\lambda\in\mathcal{Y}_0$. 
Since there are no injective embeddings of $G_\pi$ into a Young diagram with the number of boxes smaller then the number of edges in $G_\pi$, we have
\begin{equation*}
\widehat{p^{}_\pi} \left( \lambda \right) = N_{G_\pi} \left( \lambda \right) =0
\end{equation*}
\noindent
for all elements $\lambda\in\mathcal{Y}_0$. 
From that we deduce that $\mathcal{Y}_0$ is a set of zeros of the polynomial $\A{\topp} F_N -\widehat{p^{}_\pi}$. The appropriate set of zeros of a polynomial of sufficiently small degree determines the vanishing of the polynomial. In fact, we can use the characterisation given by Śniady \cite[Lemma 7.1]{Sniady2015} to conclude that the symmetric polynomial
\begin{equation*}
W (x_1 , \ldots ,x_N ):= \Big( \A{\topp} F_N -\widehat{p^{}_\pi}\Big) (x_1 -1 , \ldots ,x_N -1 ) ,
\end{equation*}
\noindent
which is of a degree at most $|\pi |-1$, vanishes. Hence we conclude that
\begin{equation*}
 \A{\topp} F_N = \widehat{p^{}_\pi}
\end{equation*}
\noindent
which finishes the proof.
\end{proof}

\subsection{Hands-shaking procedure}
\label{procedure11}
Let 
\begin{equation*}
\pi =\left( \pi_1,\ldots,\pi_n \right) ,\quad
\sigma =\left( \sigma_1,\ldots,\sigma_l \right)
\end{equation*}
\noindent
be two partitions. 
We define a class of collections of maps by the following procedure:
\begin{enumerate}
\item[Step 1.] For each $i \in [n]$ we assign a white vertex with $\pi_i$ outgoing half-edges. We label this vertex by the number $i$ and we root it, \ie we choose one of the outgoing half-edges and decorate it. Similarly, for each $j\in [l]$ we assign a black vertex with $\sigma_j$ outgoing half-edges and we root it.
\item[Step 2.] We match some of the half-edges going out from the white vertices with some of those going out from black vertices. 
\item[Step 3.] We close each of the non-closed half-edges by a white or a black vertex so that the graph remains bipartite.
\end{enumerate}

We call the procedure described above the ``hands-shaking procedure''. 
The name provenance could be explained as follows: there are white and black vertices with hands; the number of hands is given by the partitions $\pi$ and $\sigma$. 
They shake theirs hands in any way they like, but only black-white connections are allowed. 
On \cref{proc1} we present an example of applying this procedure.

\begin{de}
\label{procedure1}
For a given triple of partitions $\pi, \sigma, \mu$ we denote by $P^\mu_{\pi,\sigma} $ the set of all $\mu$-collections of maps which may be obtain as an outcome of performing the above presented ``hands-shaking procedure''. 
\end{de}

Each $\mu$-collection of maps $M \in P^\mu_{\pi,\sigma} $ can be obtained in the \emph{unique} way as an outcome of the presented procedure. The uniqueness follows from the fact that the position of each edge from $M$ is uniquely determined by the labellings on the rooted vertices and the order of the outgoing half-edges.

\begin{figure}
\centering
\begin{tikzpicture}[scale=0.8]

\draw[gray, thick,decoration={markings,mark=at position 0.866  with {\arrow{>}}},postaction={decorate}] (1,10) -- (1.5,10.5);

\draw[gray, thick] (1,10) --  (1.5,10);

\draw[gray, thick] (1,10) -- (1.5,9.5);

\draw[gray, thick,decoration={markings,mark=at position 0.866  with {\arrow{>}}},postaction={decorate}] (1,8.5) -- (1.5,9);

\draw[gray, thick] (1,8.5) --  (1.5,8);

\draw[gray, thick,decoration={markings,mark=at position 0.866  with {\arrow{>}}},postaction={decorate}]  (1,7) --  (1.5,7);

\draw[gray, thick,decoration={markings,mark=at position 0.866  with {\arrow{>}}},postaction={decorate}]  (5,9.5) -- (4.5,10);

\draw[gray, thick] (5,9.5) --  (4.5,9);

\draw[gray, thick,decoration={markings,mark=at position 0.866  with {\arrow{>}}},postaction={decorate}]  (5,7.5) -- (4.5,8);

\draw[gray, thick] (5,7.5) --  (4.5,7);

\filldraw[black] (1,10) circle (3pt) node[anchor=east] {1};
\filldraw[black] (1,8.5) circle (3pt) node[anchor=east] {2};
\filldraw[black] (1,7) circle (3pt) node[anchor=east] {3};

\fill[white] (5,7.5) circle (3pt) node[anchor=east] {};
\draw[black] (5,7.5) circle (3pt) node[anchor=west] {2};
\fill[white] (5,9.5) circle (3pt) node[anchor=east] {};
\draw[black] (5,9.5) circle (3pt) node[anchor=west] {1};

\draw[gray, thick]  (9,10) --  (9.5,10);

\draw[gray, thick] (9,10) -- (9.5,9.5);

\draw[gray, thick,decoration={markings,mark=at position 0.166  with {\arrow{>}}},decoration={markings,mark=at position 0.836  with {\arrow{<}}},postaction={decorate},rounded corners=15pt] (9,10) -- (9.5,10.5) -- (12.5,10) -- (13,9.5);

\draw[gray, thick,decoration={markings,mark=at position 0.166  with {\arrow{>}}},postaction={decorate},rounded corners=15pt]   (9,8.5) -- (9.5,9)  --  (12.5,7) -- (13,7.5);

\draw[white, line width=4pt,rounded corners=15pt] (9,8.5) --  (9.5,8) -- (12.5,8) -- (13,7.5);

\draw[gray, thick,decoration={markings,mark=at position 0.836  with {\arrow{<}}},postaction={decorate},rounded corners=15pt] (9,8.5) --  (9.5,8) -- (12.5,8) -- (13,7.5);

\draw[white, line width=4pt,rounded corners=15pt] (9,7) --  (9.5,7) -- (12,8)-- (12.5,9)  -- (13,9.5) ;

\draw[gray, thick,decoration={markings,mark=at position 0.166  with {\arrow{>}}}, postaction={decorate},rounded corners=15pt] (9,7) --  (9.5,7) -- (12,8)-- (12.5,9)  -- (13,9.5) ;

\filldraw[black] (9,10) circle (3pt) node[anchor=east] {1};
\filldraw[black] (9,8.5) circle (3pt) node[anchor=east] {2};
\filldraw[black] (9,7) circle (3pt) node[anchor=east] {3};

\fill[white] (13,7.5) circle (3pt) node[anchor=east] {};
\draw[black] (13,7.5) circle (3pt) node[anchor=west] {2};
\fill[white] (13,9.5) circle (3pt) node[anchor=east] {};
\draw[black] (13,9.5) circle (3pt) node[anchor=west] {1};

\draw[gray, thick] (1,2) --  (1.5,2);

\draw[gray, thick] (1,2) -- (1.5,1.5);

\draw[gray, thick,decoration={markings,mark=at position 0.166  with {\arrow{>}}},decoration={markings,mark=at position 0.836  with {\arrow{<}}},postaction={decorate},rounded corners=15pt]  (1,2) -- (1.5,2.5) -- (4.5,2) -- (5,1.5);

\draw[gray, thick,decoration={markings,mark=at position 0.166  with {\arrow{>}}},postaction={decorate},rounded corners=15pt]  (1,0.5) -- (1.5,1)  --  (4.5,-1) -- (5,-0.5);

\draw[white, line width=2pt,rounded corners=15pt] (1,0.5) --  (1.5,0) -- (4.5,0) -- (5,-0.5);

\draw[gray, thick,decoration={markings,mark=at position 0.836  with {\arrow{<}}},postaction={decorate},rounded corners=15pt]  (1,0.5) --  (1.5,0) -- (4.5,0) -- (5,-0.5);

\draw[white, line width=2pt,rounded corners=15pt] (1,-1) --  (1.5,-1) -- (4,0)-- (4.5,1)  -- (5,1.5) ;

\draw[gray, thick,decoration={markings,mark=at position 0.166  with {\arrow{>}}}, postaction={decorate},rounded corners=15pt] (1,-1) --  (1.5,-1) -- (4,0)-- (4.5,1)  -- (5,1.5) ;

\filldraw[black] (1,2) circle (3pt) node[anchor=east] {1};
\filldraw[black] (1,0.5) circle (3pt) node[anchor=east] {2};
\filldraw[black] (1,-1) circle (3pt) node[anchor=east] {3};

\fill[white] (5,-0.5) circle (3pt) node[anchor=east] {};
\draw[black] (5,-0.5) circle (3pt) node[anchor=west] {2};
\fill[white] (5,1.5) circle (3pt) node[anchor=east] {};
\draw[black] (5,1.5) circle (3pt) node[anchor=west] {1};

\fill[white] (1.5,2) circle (3pt) node[anchor=east] {};
\draw[black] (1.5,2) circle (3pt) node[anchor=west] {};
\fill[white] (1.5,1.5) circle (3pt) node[anchor=east] {};
\draw[black] (1.5,1.5) circle (3pt) node[anchor=west] {};

\begin{scope}[shift={(-1,-17.7)}]

\draw[gray, dashed, thick,decoration={markings,mark=at position 0.833  with {\arrow{<}}}, postaction={decorate},rounded corners=9pt]  (13,18) --  (13.5,18.5) -- (14.5,18.5)-- (15,18);
\draw[gray, thick,decoration={markings,mark=at position 0.166  with {\arrow{>}}}, postaction={decorate},rounded corners=9pt]  (13,18) --  (13.5,17.5) -- (14.5,17.5)-- (15,18);
\draw[black] (14,18) circle (28pt) node[anchor=west] {};

\filldraw[black] (13,18) circle (3pt) node[anchor=east] {$2$};

\fill[white] (15,18) circle (3pt) node[anchor=east] {};
\draw[black] (15,18) circle (3pt) node[anchor=west] {$2$};

\end{scope}

\begin{scope}[shift={(-4,-17.7)}]

\draw[gray, dashed, thick,decoration={markings,mark=at position 0.266  with {\arrow{>}}}, postaction={decorate},rounded corners=9pt]  (13.1,18.4) --  (13.5,18.5) -- (14.5,18.5)-- (15,18);
\draw[gray, thick,decoration={markings,mark=at position 0.3  with {\arrow{>}}},decoration={markings,mark=at position 0.7  with {\arrow{<}}},postaction={decorate},rounded corners=9pt]   (14,17.6) -- (14.5,17.5)-- (15,18);
\draw[gray, thick,rounded corners=6pt]  (13.6,17.1) -- (13.8,17.2) --   (14,17.6);
\draw[gray, thick,rounded corners=9pt]  (13.3,18) --(13.6,17.6) --   (14,17.6);
\draw[black] (14,18) circle (28pt) node[anchor=west] {};

\filldraw[black] (13.1,18.4) circle (3pt) node[anchor=east] {$3$};
\filldraw[black]  (14,17.6) circle (3pt) node[anchor=south] {$1$};

\fill[white] (15,18) circle (3pt) node[anchor=east] {};
\draw[black] (15,18) circle (3pt) node[anchor=west] {$1$};

\fill[white] (13.3,18)  circle (3pt) node[anchor=east] {};
\draw[black] (13.3,18) circle (3pt) node[anchor=west] {};
\fill[white] (13.6,17.1) circle (3pt) node[anchor=east] {};
\draw[black] (13.6,17.1) circle (3pt) node[anchor=west] {};

\end{scope}

\node[text width=4.5cm] at (3,4.8) 
    {Step 1. Black and white vertices with outgoing half-edges of degrees $(\sigma_i )$ and $(\pi_j)$ respectively.};
\node[text width=4.5cm] at (11.5,4.8) 
    {Step 2. Some of the outgoing half-edges were matched. The crossing of edges is not important.};    
    
\node[text width=4.5cm] at (3,-2.8) 
    {Step 3. The rest of outgoing half-edges is closed. The collection of maps is bipartite.};
\node[text width=5.2cm] at (11.5,-2.8) 
    {As an outcome we obtain the following collection of two maps drawn on a pair of spheres.};

\end{tikzpicture}
\caption{The three steps of ``hands-shaking procedure''. As an output we obtain the $(4,2)$-collection of bipartite maps. Vertices are labelled and rooted as as the ``hands-shaking procedure'' describes.}
\label{proc1}
\end{figure}

\begin{ob}
\label{strange6}
For given partitions $\pi,\sigma, \mu $ the set $P^\mu_{\pi,\sigma}$ is non-empty if and only if the following conditions holds
\begin{enumerate}
\item \label{rt} $|\pi |,|\sigma | \leq |\mu |$,
\item \label{rv} both partitions $\pi\cup 1^{|\mu | -|\pi|}$ and $\sigma \cup 1^{|\mu | -|\sigma|}$ are sub-partitions of $\mu$.
\end{enumerate}
\end{ob}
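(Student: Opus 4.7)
My plan is to prove both implications by tracking, component by component, how the labelled big white and black vertices produced in Step~1 of the hands-shaking procedure are distributed among the connected components of the resulting collection, and using the same bookkeeping on the black side that is used on the white side.

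For necessity, given $M \in P^\mu_{\pi,\sigma}$, I let $W_i \subseteq [\ell(\pi)]$ index the big whites lying in the $i$-th connected component $M_i$ and let $a_i \ge 0$ count the degree-one whites added to $M_i$ during Step~3. Bipartiteness and the edge-count of $M_i$ give $\mu_i = \sum_{j \in W_i} \pi_j + a_i$, which summed over $i$ yields $|\mu| = |\pi| + \sum_i a_i$, so $|\pi| \le |\mu|$. The data $(W_i,a_i)_i$ is precisely a set-partition of the parts of $\pi \cup 1^{|\mu|-|\pi|}$ whose blocks have sums $\mu_1, \dots, \mu_{\ell(\mu)}$, hence $\pi \cup 1^{|\mu|-|\pi|} \preceq \mu$. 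The symmetric argument applied to the labelled big black vertices and the degree-one blacks added in Step~3 supplies the conditions involving $\sigma$.

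For sufficiency, the two sub-partition hypotheses furnish set-partitions $\nu^w$ and $\nu^b$ that assign to each component index $i \in [\ell(\mu)]$ a multiset of white degrees and a multiset of black degrees, each summing to $\mu_i$. I would then build each $M_i$ explicitly: first produce a connected bipartite multigraph on $\mu_i$ edges realising the prescribed bipartite degree sequence (for instance via a spanning-tree backbone between a chosen big white and big black, then adding the remaining big vertices and absorbing excess degree by parallel edges and leaves), and then promote it to a bipartite map by choosing any cyclic order of half-edges at every vertex. The roots on big vertices come from Step~1; big-to-big edges are then exactly the Step~2 matches, while edges incident to any degree-one vertex are the Step~3 closures. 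This exhibits the assembled collection as a legitimate output of the hands-shaking procedure, so it lies in $P^\mu_{\pi,\sigma}$.

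The main obstacle lies in the sufficiency direction: one must verify that, for every component index $i$ and every valid choice of $\nu^w_i, \nu^b_i$, a connected bipartite multigraph with exactly the prescribed bipartite degree sequence and exactly $\mu_i$ edges really exists. A careful case analysis of degenerate components --- for example those with big vertices of only one colour, or those consisting of a single edge between two degree-one vertices --- is needed both to keep the construction uniform and to confirm that every such map is in fact realised as an output of the procedure.
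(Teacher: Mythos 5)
Your necessity argument is correct and is, in expanded form, exactly the paper's: the paper notes that any output of the hands-shaking procedure has white- and black-vertex distributions $\pi\cup 1^{|\mu|-|\pi|}$ and $\sigma\cup 1^{|\mu|-|\sigma|}$, whence condition (1), and that the vertex-degree partitions of a $\mu$-collection are sub-partitions of the component-size partition $\mu$, whence condition (2); your per-component identity $\mu_i=\sum_{j\in W_i}\pi_j+a_i$ is precisely the witnessing set-partition made explicit.

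The sufficiency direction contains a genuine gap, and it is the one you half-suspect. The existence claim your construction rests on --- that for every block of the witnessing set-partitions there is a \emph{connected} bipartite multigraph with $\mu_i$ edges realising the prescribed degrees --- is false: if every prescribed white and black degree in a block equals $1$ and $\mu_i\geq 2$, any realisation is a disjoint union of $\mu_i$ single edges, hence disconnected. This is not a removable technicality of your particular construction: for $\pi=\sigma=(1)$ and $\mu=(2)$, conditions (1) and (2) hold (since $(1,1)\preceq(2)$), yet the procedure can only output either a single edge (if the two half-edges are matched in Step 2) or two disjoint edges (if not), so $P^{(2)}_{(1),(1)}=\emptyset$ and no choice of witnesses can rescue the argument. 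A second point your sketch glosses over: every edge of an output of the procedure is incident to at least one Step-1 (labelled) vertex, so even when a connected realisation exists one must avoid edges joining two Step-3 leaves; your identification of ``edges incident to any degree-one vertex'' with Step-3 closures conflates labelled degree-one vertices (unit parts of $\pi$ or $\sigma$) with the padding leaves. For comparison, the paper's own treatment of sufficiency is the single sentence ``one can exhibit a collection of maps'', so you are not missing a hidden argument in the source --- but naming the obstacle does not discharge it, and the existence lemma your plan requires is false as stated.
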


\begin{proof}
Firstly, we will show the necessity of conditions. 
Observe that by performing ``hands-shaking procedure'', in which we obtain a $\mu$-collection of map, the vertex set is given by
\begin{equation*}
 \Lambda_\mathcal{W} (M ) = \pi \cup 1^{|\mu | -| \pi |}  \quad\quad\text{and}\quad\quad
\Lambda_\mathcal{B} (M ) = \sigma \cup 1^{|\mu | -| \sigma |}.
\end{equation*}
\noindent
The first condition follows immediately. Partitions describing white or black vertices distributions are sub-partitions of a partition describing face distribution. Hence the second condition has to be satisfied. 

For partitions satisfying those two conditions one can exhibit a collection of maps from $P^\mu_{\pi,\sigma}$, which proves the sufficiency of those conditions. 
\end{proof}

\begin{ob}
\label{strange5}
For given partitions $\pi,\sigma, \mu $: $|\pi |,|\sigma | \leq |\mu |$ we have
\[
\Big\vert P^\mu_{\pi,\sigma} \Big\vert = 
C (\pi, \sigma ;\mu) \cdot \dfrac{z_\pi z_\sigma}{z_\mu} \Big\vert  \MOpj \Big\vert 
\]
where 
\begin{multline}
\label{99}
C (\pi, \sigma ;\mu ) =\sum_{k=0}^{m_1 (\mu )} 
\binom{m_1 (\mu )}{k}
\binom{m_1 (\pi)+|\mu|-|\pi| -m_1(\mu)}{m_1(\pi) -k} \cdot \\
\binom{m_1(\sigma)+|\mu|-|\sigma| - m_1(\mu)+k}{m_1 (\sigma ) -m_1(\pi) +k} ,
\end{multline} 
which is equal to 
\begin{equation*}
\binom{m_1(\pi) + |\mu | -|\pi |}{m_1(\pi)} \binom{m_1(\sigma) + |\mu | -|\sigma |}{m_1(\sigma)} 
\end{equation*}
\noindent
if $m_1 (\mu )=0$ and is equal to $1$ if $\pi,\sigma ,\mu$ are partitions of the same integer. 
\end{ob}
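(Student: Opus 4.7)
The plan is to establish the formula via a double-counting argument in the spirit of \cref{strange} and \cref{strange2}. First I would introduce an auxiliary set $\mathcal{E}$ whose elements are bipartite oriented $\mu$-lists $M\in\MOpj$ equipped with additional decorations that mimic the input to the hands-shaking procedure of \cref{procedure11}: for every $i\geq 2$, a bijective labelling of the degree-$i$ whites of $M$ by $\{j:\pi_j=i\}$ together with a choice of root half-edge at each, and analogously for the degree-$i$ blacks; and a selection of $m_1(\pi)$ of the $m_1(\pi)+|\mu|-|\pi|$ degree-$1$ whites to be declared ``original'', together with a bijective labelling by $\{j:\pi_j=1\}$, and analogously for blacks; all subject to the constraint that every isolated-edge component of $M$ contains at least one original vertex. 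Non-isolated components automatically contain a vertex of degree $\geq 2$, which must be original, so this is the only non-trivial constraint.

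The set $\mathcal{E}$ admits two natural projections. The first, $\mathcal{E}\to\MOpj$, simply forgets the original-vertex data; its fibre over any $M\in\MOpj$ has cardinality $z_\pi z_\sigma\cdot C(\pi,\sigma;\mu)$, since $z_\pi z_\sigma$ counts the labellings and rootings of the chosen original vertices while $C(\pi,\sigma;\mu)$ counts the admissible selections of original degree-$1$ vertices. The second, $\mathcal{E}\to P^\mu_{\pi,\sigma}$, forgets the numbering and rooting of the $\ell(\mu)$ components of $M$; one checks that the resulting labelled collection is exactly the output of the hands-shaking procedure performed on the input data encoded by $M$ (the iso-edge constraint ensuring no ``all-pendant'' components appear), and by the oriented analogue of \cref{RL} its fibres have cardinality $z_\mu$. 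Equating the two counts of $|\mathcal{E}|$ then yields $|P^\mu_{\pi,\sigma}|=C(\pi,\sigma;\mu)\cdot \frac{z_\pi z_\sigma}{z_\mu}\cdot |\MOpj|$.

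To compute $C(\pi,\sigma;\mu)$ explicitly, I would parameterise by $k$, the number of declared-original degree-$1$ whites lying on isolated-edge components. There are $\binom{m_1(\mu)}{k}$ ways to pick these whites and $\binom{m_1(\pi)+|\mu|-|\pi|-m_1(\mu)}{m_1(\pi)-k}$ ways to pick the remaining $m_1(\pi)-k$ original degree-$1$ whites from the non-isolated-edge degree-$1$ whites. For the blacks, the $m_1(\mu)-k$ iso edges whose whites are not declared original are forced to have their blacks declared original, while for the remaining $k$ iso edges the black may be declared original freely; summing over $j$, the number of iso edges with both endpoints declared original, and applying Vandermonde's identity collapses the black count to the third binomial factor in the statement. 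The two simplified formulas then follow immediately: when $m_1(\mu)=0$ only $k=0$ contributes, yielding $C=\binom{m_1(\pi)+|\mu|-|\pi|}{m_1(\pi)}\binom{m_1(\sigma)+|\mu|-|\sigma|}{m_1(\sigma)}$; when $|\pi|=|\sigma|=|\mu|$ there are no pendants, so $k=m_1(\mu)$ is forced and the unique admissible configuration gives $C=1$.

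The main obstacle will lie in the careful fibre analysis of the two projections: one must verify that the decorations added to $\mathcal{E}$ genuinely rigidify the underlying oriented bipartite $\mu$-collection, so that both fibres have the announced cardinalities with no residual automorphism factor. The iso-edge constraint, inherent to Step~3 of the hands-shaking procedure, will be the source of the non-trivial combinatorics in $C$, and its clean evaluation via Vandermonde's identity will be the key combinatorial step.
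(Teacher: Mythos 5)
Your proposal follows essentially the same route as the paper: a double count of oriented $\mu$-lists decorated with the vertex labels, roots and ``original-vertex'' selections that encode an input to the hands-shaking procedure, with one fibre of size $z_\mu$ (numbering and rooting the components of an oriented list) and the other of size $z_\pi z_\sigma\, C(\pi,\sigma;\mu)$; your identification of the only non-trivial constraint (each isolated-edge component must carry an original vertex) and your parameter $k$ coincide with the paper's. One point deserves attention: your evaluation of the black count gives, after Vandermonde, $\binom{m_1(\sigma)+|\mu|-|\sigma|-m_1(\mu)+k}{\,m_1(\sigma)-m_1(\mu)+k\,}$, whereas the statement prints $m_1(\sigma)-m_1(\pi)+k$ in the lower index; these agree only when $m_1(\pi)=m_1(\mu)$. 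Your version is the one consistent with the claimed specialisations (e.g.\ it forces $C=1$ when $|\pi|=|\sigma|=|\mu|$, and it correctly gives $C=0$ for $\pi=\sigma=(2)$, $\mu=(2,1)$, where $P^\mu_{\pi,\sigma}=\emptyset$), so the printed lower index appears to be a typo in \cref{strange5}; you should state your derived formula explicitly rather than assert that it ``collapses to the factor in the statement''. Apart from this, your more explicit treatment of the projection to $P^\mu_{\pi,\sigma}$ (checking that the decorations rigidify the collection) fills in details the paper only sketches.
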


\begin{proof}
Observe that the elements of $P^\mu_{\pi,\sigma} $ are $\mu$-collection of bipartite orientable maps whose vertex set is given by
\begin{equation*}
 \Lambda_\mathcal{W} (M ) = \pi \cup 1^{|\mu | -| \pi |}  \quad\quad\text{and}\quad\quad
\Lambda_\mathcal{B} (M ) = \sigma \cup 1^{|\mu | -| \sigma |}.
\end{equation*}
\noindent
Each such an element has the following labels and roots on the vertices and half-edges:
\begin{enumerate}
\item there are $n$ white vertices of degrees $\pi_1, \ldots ,\pi_n$, each being labelled by a relevant natural number from $[n]$ and rooted, \ie we choose one of the outgoing half-edges and decorate it by an arrow,
\item there are $l$ black vertices of degrees $\sigma_1, \ldots ,\sigma_l$, each being labelled by a relevant natural number from $[l]$ and rooted.
\end{enumerate} 
Moreover, each connected component of an element from $ P^\mu_{\pi,\sigma} $ has \emph{at least one} decorated vertex. 
\medskip

We use the double counting method as in \cref{strange}. 
For each $M\in P^\mu_{\pi,\sigma}$ we can root and number the connected components in $z_\mu$ ways. 

Let us choose $M \in  \MOpj$. 
The procedure of labelling and rooting the vertices is much more subtle. 
Firstly, we have to choose $m_1(\pi) $ white (respectively $m_1 (\sigma)$ black) vertices and label them by adequate numbers. 
At the first sight, we could do this in
\begin{equation*}
\binom{m_1(\pi) + |\mu | -|\pi |}{m_1(\pi)} \binom{m_1(\sigma) + |\mu | -|\sigma |}{m_1(\sigma)} z_\pi z_\sigma
\end{equation*}
\noindent
ways (which is equal to $ z_\pi z_\sigma$ if $\pi,\sigma,\mu$ are partitions of the same integer). However, in the definition of $P^\mu_{\pi,\sigma}$ we required to contain at least one labelled vertex from each connected component. This is trivially satisfied if $m_1 (\mu) =0$. This consideration yields the expression \cref{99}. We describe briefly the details. 

There is $m_1 (\mu) $ one-element connected components in $M$. Denote the set of those components by $M_1$. For each integer $k$: $0\leq k \leq m_1 (\mu )$ we can choose $k$ white vertices from $M_1$ and we required that exactly those white vertices among all white vertices in $M_1$ are numbered. The number of possible ways of numbering vertices of $M$ in such a way is the contribution to the sum in \cref{99} relevant to $k$. We sum up over all $k = 0,\ldots ,m_1 (\mu)$.
\end{proof}

\subsection{Proof of \cref{main result}}
We prove that candidates $ p^\mu_{\pi,\sigma} :=| P^\mu_{\pi,\sigma} |$ for top-degree part of structure constants $ g^\mu_{\pi,\sigma}$ suit well for that role.

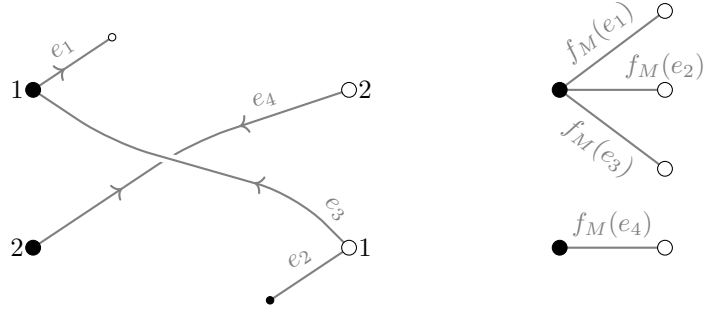
\begin{figure}
\centering
\begin{tikzpicture}[scale=0.7]

\draw[gray, thick] (1,4) -- (3,5.5) node [midway, above, sloped] (TextNode) {$f_M (e_1)$};
\draw[gray, thick] (1,4) -- (3,4) node [ above, sloped] (TextNode) {$f_M (e_2)$};
\draw[gray, thick] (1,4) -- (3,2.5) node [midway, below, sloped] (TextNode) {$f_M (e_3)$};

\draw[gray, thick] (1,1) -- (3,1) node [midway, above, sloped] (TextNode) {$f_M (e_4)$};

\filldraw[black] (1,1) circle (4pt) node[anchor=south] {};
\filldraw[black] (1,4) circle (4pt) node[anchor=south] {};

\fill[white] (3,5.5) circle (4pt) node[anchor=east] {};
\fill[white] (3,4) circle (4pt) node[anchor=west] {};
\fill[white] (3,2.5) circle (4pt) node[anchor=east] {};
\fill[white] (3,1) circle (4pt) node[anchor=west] {};

\draw[black] (3,1) circle (4pt) node[anchor=south] {};
\draw[black] (3,2.5) circle (4pt) node[anchor=south] {};
\draw[black] (3,4) circle (4pt) node[anchor=south] {};
\draw[black] (3,5.5) circle (4pt) node[anchor=south] {};

\draw[gray,decoration={markings,mark=at position 0.4 with {\arrow{>}}},postaction={decorate}, thick] (-9,4) -- (-7.5,5)  node [midway, above, sloped] (TextNode) {$e_1$};
\draw[gray, thick] (-3,1) -- (-4.5,0)  node [midway, above, sloped] (TextNode) {$e_2$};
\draw[gray, thick,decoration={markings,mark=at position 0.3  with {\arrow{>}}},decoration={markings,mark=at position 0.7  with {\arrow{<}}},postaction={decorate},rounded corners=15pt](-9,1) --  (-6,3) -- (-3,4) node [midway, above, sloped] (TextNode) {$e_4$};
\draw[white, line width=4pt,rounded corners=15pt] (-9,4) --  (-7.5,3) -- (-4,2) -- (-3,1);
\draw[gray, thick,decoration={markings,mark=at position 0.7  with {\arrow{<}}},postaction={decorate},rounded corners=15pt] (-9,4) --  (-7.5,3) -- (-4,2) -- (-3,1) node [midway, above, sloped] (TextNode) {$e_3$};

\filldraw[black] (-9,4) circle (4pt) node[anchor=east] {1};
\filldraw[black] (-9,1) circle (4pt) node[anchor=east] {2};
\filldraw[black] (-4.5,0) circle (2pt) node[anchor=west] {};

\fill[white] (-7.5,5) circle (2pt) node[anchor=east] {};
\draw[black] (-7.5,5) circle (2pt) node[anchor=west] {};
\fill[white] (-3,4) circle (4pt) node[anchor=east] {};
\draw[black] (-3,4) circle (4pt) node[anchor=west] {2};
\fill[white] (-3,1) circle (4pt) node[anchor=east] {};
\draw[black] (-3,1) circle (4pt) node[anchor=west] {1};

\end{tikzpicture}
\caption{Example of a collection of maps $M \in P_{(2,1),(2,1)}^{(3,1)}$ and an example of a bijection $f_M^\mu$ between the edges in $M$ and the edges of the graph $G_{(3,1)}$.} 
\label{fig4}
\end{figure}

\begin{prop}
\label{candidate}
For any Young diagram $\lambda \in \mathbb{Y}$, the following equality holds:
\begin{equation}
\A{\topp} \Ch_\pi \left( \lambda \right) \cdot \A{\topp} \Ch_\sigma \left( \lambda \right) =
\sum_\mu p^\mu_{\pi,\sigma} \A{\topp} \Ch_\mu \left( \lambda \right) .
\end{equation}
\end{prop}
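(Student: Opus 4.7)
The plan is to reduce the identity, via Proposition~\ref{AtopCh}, to the combinatorial statement
\[
N_{G_\pi}(\lambda)\cdot N_{G_\sigma}(\lambda) \;=\; \sum_\mu |P^\mu_{\pi,\sigma}|\cdot N_{G_\mu}(\lambda) \qquad (\lambda \in \mathbb{Y}),
\]
and then to prove this by constructing an explicit bijection $\Phi$ between pairs $(F_\pi,F_\sigma)$ of injective embeddings of $G_\pi,G_\sigma$ into~$\lambda$ (enumerated by the left-hand side) and triples $(M,F_\mu)$ --- with $M\in P^\mu_{\pi,\sigma}$ for some partition~$\mu$, and $F_\mu$ an injective embedding of $G_\mu$ into~$\lambda$ --- enumerated by the right-hand side.

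To set up~$\Phi$, I would feed the data of $(F_\pi,F_\sigma)$ into the hands-shaking procedure of Definition~\ref{procedure1}: each big black vertex $b_i^\pi$ of~$G_\pi$ (of degree $\pi_i$, placed by~$F_\pi$ into some row together with an ordered sequence of $\pi_i$ distinct columns of that row) is re-interpreted as a labelled big \emph{white} vertex of~$M$ whose rooted hand is assigned to the first column of the sequence; symmetrically each $b_j^\sigma$ of~$G_\sigma$ becomes a labelled big \emph{black} vertex of~$M$. I then declare the $h$-th hand of big white~$i$ to be \emph{matched} with the $h'$-th hand of big black~$j$ precisely when the corresponding two boxes of~$\lambda$ coincide, i.e.~when $F_\pi(b_i^\pi)=F_\sigma(b_j^\sigma)$ and $c_{i,h}^\pi=c_{j,h'}^\sigma$; all remaining hands close in the unique way dictated by step~3. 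The output $M$ lies in $P^\mu_{\pi,\sigma}$, where $\mu$ is the partition of its component-sizes, and the companion embedding~$F_\mu$ places each component~$C_k$ into the common row of its big vertices, the columns of $C_k$'s edges being inherited from $F_\pi$ and~$F_\sigma$. The inverse is natural: given $(M,F_\mu)$, one recovers $F_\pi(b_i^\pi)$ as the row of the component containing the labelled big white~$i$, with its rooted-ordered columns read off from the $F_\mu$-images of its hands, and analogously for $F_\sigma$.

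The crucial verification is that $F_\mu$ is truly an injective embedding of~$G_\mu$: whenever two distinct components $C_k\ne C_{k'}$ of~$M$ land in the same row of~$\lambda$, their edges have to occupy disjoint sets of columns. This is exactly where the design of~$\Phi$ pays off: a column shared between a big vertex of~$C_k$ and a big vertex of~$C_{k'}$ of opposite colours would, by the matching rule, produce a matched hand and merge the two components into one, contradicting $C_k\ne C_{k'}$; same-colour coincidences are excluded by edge-injectivity of $F_\pi$ or~$F_\sigma$. The main obstacle I anticipate is not this combinatorial core but rather the careful bookkeeping of the labelled/rooted data on elements of $P^\mu_{\pi,\sigma}$, the same intricacy that produced the factor $C(\pi,\sigma;\mu)\,z_\pi z_\sigma/z_\mu$ in Observation~\ref{strange5}: one has to confirm that the \emph{ordered} column sequence of each embedding corresponds exactly to the \emph{rooted} ordering of hands of the corresponding big vertex in~$M$, so that distinct rootings (which may yield abstractly isomorphic bipartite maps) correspond to genuinely distinct embedding pairs. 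Once this is cleanly checked, the equality of cardinalities produced by~$\Phi$, combined with Proposition~\ref{AtopCh}, yields the proposition.
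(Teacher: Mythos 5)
Your proposal is correct and follows essentially the same route as the paper: reduce via \cref{AtopCh} to a counting identity for injective embeddings, then biject pairs of embeddings of $G_\pi$ and $G_\sigma$ with pairs $(M,F_\mu)$, $M\in P^\mu_{\pi,\sigma}$, through the hands-shaking procedure (your re-colouring of the big vertices of $G_\pi$ is exactly the paper's use of negative injective embeddings of $\overline{G_\pi}$, cf.\ \cref{rem1}, and your fixed hand-to-column correspondence plays the role of the paper's bijections $f_M^\mu$). Your explicit verification that the induced $F_\mu$ is injective across components in the same row is a detail the paper leaves implicit.
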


\begin{proof}
According to \cref{AtopCh}, the two quantities
\begin{equation*}
\A{\topp} \Ch_\sigma \left( \lambda \right) = {N_{G_\sigma}\left( \lambda \right)}
\quad\quad\text{and}\quad\quad 
\A{\topp} \Ch_\mu \left( \lambda \right) ={N_{G_\mu}\left( \lambda \right)}
\end{equation*} 
\noindent
can be represented equivalently by the number of injective embeddings of $G_\sigma $ and $G_\mu$ into $\lambda$. 
Similarly, 
\begin{equation*}
\A{\topp} \Ch_\pi \left( \lambda \right) = \overline{N_{\overline{G_\pi}}\left( \lambda \right)}
\end{equation*}
\noindent 
is equal to the number of negative injective embeddings of $\overline{G_\pi}$ into $\lambda$ (see \cref{rem1}).

For each $M \in P_{\pi, \sigma}^\mu$ we choose some bijection $f_M^\mu$ between the edges of $M$ and the edges of the graph $G_\mu$ (see \cref{graph}), which preserves the connected components, see \cref{fig4}.

We shall construct a bijection between:
\begin{itemize}
\item a pair $ \Big(\overline{N_{\overline{G_\pi}}\left( \lambda \right)} , {N_{G_\sigma}\left( \lambda \right)} \Big) $ consisting of negative injective embeddings and injective embeddings of $G_\sigma$ and $\overline{G_\pi}$ into $\lambda$ respectively;
\item a pair $ \Big( P^\mu_{\pi,\sigma}, {N_{G_\mu}\left( \lambda \right)}  \Big) $ consisting of collections of maps from the class $  P^\mu_{\pi,\sigma}$ and injective embeddings of $G_\mu$ into $\lambda$.
\end{itemize} 
\noindent
Construction of such bijection follows the statement of \cref{candidate}. We proceed analogously as in the ``hands-shaking procedure'' described in \cref{procedure11}.

For each $i \in [n]$ we assign a white vertex with $\pi_i$ outgoing half-edges. We label this vertex by a number $i$ and root it, \ie we choose one of outgoing half-edges and label it. 
We can choose a bijection between such half-edges and the edges in $\overline{G_\pi}$ which preserves the connected components. 
Similarly, for each $j\in [l]$ we assign a black vertex with $\sigma_j$ outgoing half-edges and we root it. 
Then we choose a bijection between such half-edges and the edges in ${G_\sigma}$ which preserves the connected components. 
\medskip

A reverse injective embedding of $\overline{G_\pi}$ and an injective embedding of $G_\sigma$ into $\lambda$ transfer into an injective embedding of above described half-edges going out from labelled and rooted black and white vertices.

We use the procedure described in \cref{procedure11} to connect \emph{in the unique way} those outgoing half-edges which are embedded in the same box of Young diagram $\lambda$. 
We close each of non-closed half-edges by a white or a black vertex so that the graph remains bipartite. 

In that way we obtain a list of maps $M \in  P^\mu_{\pi,\sigma}$ injectively embedded into the Young diagram $\lambda$. 
Observe that all edges from any given connected component of $M$ are embedded into the boxes of $\lambda$ which are in the same row. 
Using the bijection $f_M^\mu$ between the edges of $M$ and the edges of $G_\mu$, we obtain the injective embedding of $G_\mu$ into $\lambda$.
\medskip

The above procedure is reversible. 
Indeed, for a given collection of maps $M \in  P^\mu_{\pi,\sigma}$ and an injective embedding of $G_\mu$ into diagram $\lambda$, we can easily construct the injective embedding of the edges of $M$ into the diagram $\lambda$, for which all edges from any given connected component of $M$ are embedded to the boxes from the same row. From such an object we can recover the elements from ${N_{G_\sigma}\left( \lambda \right)} $ and $\overline{N_{\overline{G_\pi}}\left( \lambda \right)}$.
\end{proof}

With \cref{candidate} in hand, we are ready to present the proof of \cref{main result}.

\begin{proof}[Proof of \cref{main result}]
The upper bound of a degree for polynomials $\g{\mu} (\delta )$ is given in \cref{estimation}. 
Since $\delta =\dfrac{1}{A} -A$, we have the following estimation
\begin{equation*}
\deg_A \g{\mu} = \deg_\delta \g{\mu} \leq d\left( \pi, \sigma ;\mu\right) .
\end{equation*}

Let us fix a Young diagram $\lambda$.
Recall that the evaluation of $\Ch_\pi $ on any Young diagram $\lambda$ is a Laurent polynomial in $\Laurent$ of a degree at most $n_2 (\pi) :=|\pi |-\ell (\pi)$. We investigate the ${n_2 (\pi) +n_2 (\sigma)}$ degree part of the pointwise product of two Jack characters, namely
\begin{equation*}
\A{n_2 (\pi) +n_2 (\sigma)} \Ch_\pi \left( \lambda \right) \cdot \Ch_\sigma  \left( \lambda \right) =
\A{n_2 (\pi) +n_2 (\sigma)} \sum_\mu \g{\mu}  \Ch_\mu   \left( \lambda \right).
\end{equation*} 
\noindent
By the estimations on the upper bounds of the $A$-degrees of Laurent polynomials $ \Ch_\pi (\lambda )$ and $\g{\mu}$ we have
\begin{equation*}
\A{\topp} \Ch_\pi \left( \lambda \right) \cdot \A{\topp} \Ch_\sigma  \left( \lambda \right) =\sum_\mu \A{d(\pi, \sigma ;\mu )} \g{\mu}  \A{\topp} \Ch_\mu   \left( \lambda \right).
\end{equation*} 

We compare the above equation with \cref{candidate} and we get
\begin{equation*}
\sum_\mu \A{d(\pi, \sigma ;\mu )}g^\mu_{\pi,\sigma} \Ch_\mu\left(\lambda\right) =
\sum_\mu p^\mu_{\pi,\sigma}  \Ch_\mu\left(\lambda\right) .
\end{equation*}
\noindent
Recall that $\Ch_\mu (\lambda ) = \widehat{p^{}_\mu} (\lambda )$. We have
\begin{equation}
\label{basis}
\sum_\mu \A{d(\pi, \sigma ;\mu )}g^\mu_{\pi,\sigma} \widehat{p^{}_\mu}\left(\lambda\right) =
\sum_\mu p^\mu_{\pi,\sigma}  \widehat{p^{}_\mu}\left(\lambda\right) ,
\end{equation}

The function $\widehat{p^{}_\mu}\left(\lambda\right) $ is symmetric and its homogeneous top-degree part coincides with  the power-sum symmetric polynomial $p^{}_\mu$. 
This coincidence together with the fact that power-sum symmetric functions form a basis of symmetric functions allows us to deduce that functions $\widehat{p^{}_\mu}\left(\lambda\right) $ form also such a basis. 
We may look at \cref{basis} as on the equality of symmetric functions. Since the basis determines its coefficients in the unique way, we conclude that
\begin{equation*}
\A{d(\pi, \sigma ;\mu )} \g{\mu} = p^\mu_{\pi,\sigma} .
\end{equation*}
\noindent
The $d(\pi, \sigma ;\mu )$-degree coefficients in variable $A$ and $\delta$ of $\g{\mu}$ are equal. We conclude
\begin{equation*}
\delta^{(\pi, \sigma ;\mu )} \g{\mu} = p^\mu_{\pi,\sigma} .
\end{equation*}
\noindent
\cref{strange5} and \cref{strange6} finish the proof.
\end{proof}

\begin{appendices}
\section{Top-degree parts in the Matchings-Jack Conjecture and the $b$-Conjecture}
\label{appendix}
We shall prove that our result about the top-degree part in the Matchings-Jack Conjecture presented in \cref{C-main result} and the result of Do\l{}\oldk{e}ga \cite[Theorem 1.5]{Dol17} about the top-degree part in $b$-Conjecture are equivalent.

First note that the polynomials $c^\lambda_{\pi,\sigma}$ and $h^\lambda_{\pi,\sigma}$ are related as follows
\begin{multline}
\label{star3} 
  \sum_{n\geq 1} t^n \sum_{\lambda,\pi,\sigma\vdash n}
h_{\pi, \sigma}^{\lambda}
p_\pi(\mathbf{x})
p_\sigma(\mathbf{y})
p_\lambda(\mathbf{z})=\\
\alpha t\dfrac{\partial}{\partial t} \log 
   \left(\sum_{n\geq 1} t^n \sum_{\lambda,\pi,\sigma\vdash n}
\frac{c_{\pi, \sigma}^{\lambda}}{\alpha^{\ell(\lambda)} z_\lambda}
p_\pi(\mathbf{x})
p_\sigma(\mathbf{y})
p_\lambda(\mathbf{z})\right) 
\end{multline}
and
\begin{multline}
\label{star4} 
 \sum_{n\geq 1} t^n \sum_{\lambda,\pi,\sigma\vdash n}
\frac{c_{\pi, \sigma}^{\lambda}}{\alpha^{\ell(\lambda)} z_\lambda}
p_\pi(\mathbf{x})
p_\sigma(\mathbf{y})
p_\lambda(\mathbf{z}) = \\
\exp   
   \left(\sum_{n\geq 1} \dfrac{1}{\alpha n}  t^n \sum_{\lambda,\pi,\sigma\vdash n}
h_{\pi, \sigma}^{\lambda}
p_\pi(\mathbf{x})
p_\sigma(\mathbf{y})
 p_\lambda(\mathbf{z})   \right) ,
\end{multline}
see \cref{star} and \cref{star2}.

In \cref{C-main result} we showed that the leading coefficient of $c^\lambda_{\pi,\sigma}$ can be expressed in the following way:
\begin{equation*}
\left[ \beta^{d (\pi, \sigma ;\lambda )}\right] c^\lambda_{\pi ,\sigma}= 
\Big\vert M \in \Mll : M \text{ is unhandled }  \Big\vert
\end{equation*} 
\noindent
where $\Mll$ is the set of $\lambda$-\emph{lists of unicellular maps} with the white and black vertices distribution given by $\pi$ and $\sigma$ respectively.

On the other hand, Do\l{}\oldk{e}ga \cite[Theorem 1.5]{Dol17} showed that the leading coefficient of $h^{(n)}_{\pi,\sigma}$ can be expressed in the following way:
\begin{equation*}
\left[ \beta^{d (\pi, \sigma ;(n) )}\right] h^{(n)}_{\pi ,\sigma}= 
\Big\vert M \in M^{(n)}_{\pi ,\sigma} : M \text{ is unhandled }  \Big\vert
\end{equation*} 
\noindent
where $M^{(n)}_{\pi ,\sigma}$ is the set of \emph{unicellular maps} with the white and black vertices distribution given by $\pi$ and $\sigma$ respectively.

\begin{rem}
\label{rozbicie}
Observe that multiplication of power-sum symmetric functions expresses as follows
\[p_{\lambda_1}(\mathbf{z}) \cdot p_{\lambda_1}(\mathbf{z}) =  p_{\lambda_1 \cup \lambda_1}(\mathbf{z})\]
\noindent
in the terms of concatenations of relevant partitions.
\end{rem}

We investigate the $\left[ p_{(n)}(\mathbf{z})\right]  $ coefficient in both sides of \cref{star3}. We have
\begin{multline*}
 t^n \sum_{\pi,\sigma\vdash n}
h_{\pi, \sigma}^{(n)}
p_\pi(\mathbf{x})
p_\sigma(\mathbf{y})=
\alpha t\dfrac{\partial}{\partial t}  \left( 
   t^n \sum_{\pi,\sigma\vdash n}
\frac{c_{\pi, \sigma}^{(n)}}{\alpha z_{(n)}}
p_\pi(\mathbf{x})
p_\sigma(\mathbf{y})\right)  =\\
\alpha n  
   t^n \sum_{\pi,\sigma\vdash n}
\frac{c_{\pi, \sigma}^{(n)}}{\alpha n}
p_\pi(\mathbf{x})
p_\sigma(\mathbf{y})
\end{multline*}
\noindent
hence $c_{\pi, \sigma}^{(n)}$ and $ h_{\pi, \sigma}^{(n)} $ are equal.

Since $c_{\pi, \sigma}^{(n)} = h_{\pi, \sigma}^{(n)} $, it might seem that our result extends the result of Do\l{}\oldk{e}ga. However, a more subtle analysis of relationships between the coefficients of $c_{\pi,\sigma}^\lambda$ and $h_{\pi,\sigma}^\lambda$ shows that both results are equivalent. 

The power series expansion of the exponent function in \cref{star4} gives us
\begin{multline}
\label{star5}
\sum_{n \geq 1}
  t^n \sum_{\lambda,\pi,\sigma\vdash n}
\frac{c_{\pi, \sigma}^{\lambda}}{\alpha^{\ell(\lambda)} z_\lambda}
p_\pi(\mathbf{x})
p_\sigma(\mathbf{y}) 
p_\lambda(\mathbf{z}) = \\
\sum_{k \geq 0}\dfrac{1}{k !}
   \left(\sum_{s\geq 1} \dfrac{1}{s}t^s \sum_{\lambda,\pi,\sigma\vdash s}
\dfrac{h_{\pi, \sigma}^{\lambda}}{\alpha}
p_\pi(\mathbf{x})
p_\sigma(\mathbf{y})
 p_\lambda(\mathbf{z})   \right)^k .
\end{multline}
\noindent
We denote by $\mathcal{P}_k^{\lambda, \pi,\sigma}$ the set of triplets of lists of partitions 
\[ \Big( \left( \lambda^1 ,\ldots , \lambda^k \right) , \left( \pi^1 ,\ldots , \pi^k \right) ,\left( \sigma^1 ,\ldots , \sigma^k \right) \Big)\]
such that
\begin{equation*}
\bigcup_{i=1}^k \lambda^i = \lambda ,\quad\quad 
\bigcup_{i=1}^k \mu^i = \mu ,\quad\quad
\bigcup_{i=1}^k \sigma^i = \sigma
\end{equation*}
\noindent
and for each $i$ we have $|\lambda^i | =|\pi^i |= |\sigma^i |$. 

Let us investigate the $\left[ p_\pi(\mathbf{x}) p_\sigma(\mathbf{y})p_{\lambda}(\mathbf{z})\right]  $ coefficient in both sides of \cref{star5}. We have
\begin{equation}
\label{star6}
  t^n 
\frac{c_{\pi, \sigma}^{\lambda}}{\alpha^{\ell(\lambda)} z_\lambda} = \\
t^n \sum_{k : 1 \leq k\leq \ell (\lambda ) }\dfrac{1}{k !}
 \sum_{\subalign{\big(&( \lambda^1 ,\ldots , \lambda^k ) , \\&( \pi^1 ,\ldots , \pi^k ) ,\\ &( \sigma^1 ,\ldots , \sigma^k ) \big) \in \mathcal{P}_k^{\lambda, \pi,\sigma}}}
   \prod_{i=1}^k \dfrac{1}{|\lambda^i |} 
\dfrac{h_{\pi^i, \sigma^i}^{\lambda^i}}{\alpha}.
\end{equation}
\noindent
Do\l{}\oldk{e}ga and F\'eray \cite[Theorem 1.2]{DolegaFeray2016} gave the following bound on the degree 
\[\deg h_{\pi^i, \sigma^i}^{\lambda^i} \leq 
 |\lambda^i | +2 -\ell (\lambda^1 ) -\ell (\pi^i ) -\ell (\sigma^i ).
\]
Hence, each summand of the first sum on the right-hand side of \cref{star6} has degree equal to at most
\[ n+k -\ell (\lambda ) -\ell (\pi ) -\ell (\sigma) ,
\]
and the maximal bound may be achieved only for summands corresponding to $k=\ell (\lambda)$. For such a summand, its bound on the degree is the same as the bound on the degree for the left-hand side of \cref{star6} given by \cref{Max}. 
We have
\begin{equation*}
\frac{1}{ z_\lambda} \left[ \alpha^{d(\pi ,\sigma;\lambda)} \right] c_{\pi, \sigma}^{\lambda} = \\
\dfrac{1}{\ell (\lambda ) !} 
\sum_{\subalign{\big(&( \lambda^1 ,\ldots , \lambda^{\ell (\lambda )}  ) , \\&( \pi^1 ,\ldots , \pi^{\ell (\lambda )}  ) ,\\ &( \sigma^1 ,\ldots , \sigma^{\ell (\lambda )}  ) \big) \in \mathcal{P}_{\ell (\lambda )}^{\lambda, \pi,\sigma}}}
   \prod_{i=1}^{\ell (\lambda )} \dfrac{1}{|\lambda^i |} 
\left[ \alpha^{ |\lambda_i| +1 -\ell (\pi ) -\ell (\sigma)} \right]  h_{\pi^i, \sigma^i}^{\lambda_i } .
\end{equation*}

For a Young diagram $\lambda = (\lambda_1 ,\ldots ,\lambda_k )$, denote by $\mathcal{C}_\lambda$ the set of all \emph{compositions} of a type $\lambda$, \ie the set of all lists $(\lambda_{\sigma (1)} ,\ldots ,\lambda_{\sigma (k)} )$, for some $\sigma \in \Sym{n}$. 
Observe that 
\begin{equation*}
| \mathcal{C}_\lambda | =\dfrac{\ell (\lambda) !}{\sum_i m_i (\lambda ) !} .
\end{equation*} 
\noindent
Observe that for $k =\ell (\lambda )$ the first list in any triplet from $\mathcal{P}_k^{\lambda, \pi,\sigma}$ is a composition of a type of the Young diagram $\lambda$. 
We have
\begin{equation*}
\frac{1}{ z_\lambda} \left[ \alpha^{d(\pi ,\sigma;\lambda)} \right] c_{\pi, \sigma}^{\lambda} = \\
\dfrac{1}{\ell (\lambda ) !} 
| \mathcal{C}_\lambda |
\sum_{\subalign{\Big(&\big( (\lambda_1) ,\ldots , (\lambda_{\ell (\lambda )} )  \big) , \\&\big( \pi^1 ,\ldots , \pi^{\ell (\lambda )}  \big) ,\\ &\big( \sigma^1 ,\ldots , \sigma^{\ell (\lambda )}  \big) \Big) \in \mathcal{P}_{\ell (\lambda )}^{\lambda, \pi,\sigma}}}
   \prod_{i=1}^{\ell (\lambda )} \dfrac{1}{\lambda_i} 
\left[ \alpha^{\lambda_i +1 -\ell (\pi ) -\ell (\sigma)} \right]  h_{\pi^i, \sigma^i}^{(\lambda_i )} 
\end{equation*}
\noindent
and hence
\begin{equation}
\label{star7}
\left[ \alpha^{d(\pi ,\sigma;\lambda)} \right] c_{\pi, \sigma}^{\lambda} = \\
\sum_{\subalign{\Big(&\big( (\lambda_1) ,\ldots , (\lambda_{\ell (\lambda )} )  \big) , \\&\big( \pi^1 ,\ldots , \pi^{\ell (\lambda )}  \big) ,\\ &\big( \sigma^1 ,\ldots , \sigma^{\ell (\lambda )}  \big) \Big) \in \mathcal{P}_{\ell (\lambda )}^{\lambda, \pi,\sigma}}}
   \prod_{i=1}^{\ell (\lambda )} 
\left[ \alpha^{\lambda_i +1 -\ell (\pi ) -\ell (\sigma)} \right]  h_{\pi^i, \sigma^i}^{ (\lambda_i )} 
\end{equation}

Do\l{}\oldk{e}ga's result \cite[Theorem 1.5]{Dol17} shows us that 
\[ \left[ \alpha^{\lambda_i +1 -\ell (\pi ) -\ell (\sigma)} \right] h_{\pi^i, \sigma^i}^{ (\lambda_i )} =
 \Big\vert M\in M_{\pi^i, \sigma^i}^{ (\lambda_i )} : M \text{ is unhandled}\Big\vert .\] 
Directly  from the definition of $ \mathcal{P}_{\ell (\lambda )}^{\lambda, \pi,\sigma}$ we obtain that 
\[\left[ \alpha^{d(\pi ,\sigma;\lambda)} \right] c_{\pi, \sigma}^{\lambda} = 
\Big\vert M\in \Mll : M \text{ is unhandled}\Big\vert ,\]
\noindent
which allows us to conclude the equivalence of both results.
\end{appendices}

\let\k\oldk
\let\c\oldc
\bibliographystyle{alpha}
\bibliography{Jack_bib}

\end{document}